\newcommand{\ulxi} {{\underline \xi}}
\newcommand{\ulx} {{\underline x}}
\newcommand{\uly} {{\underline y}}
\newcommand{\ulw} {{\underline w}}
\newcommand{\bfe} {{\mathbf e}}
\newcommand{\ulzeta} {{\underline \zeta}}
\newcommand{\esssup}{{\mathrm{ess}\sup}}
\newtheorem{thm}{\bf Theorem}[section]
\newtheorem{lem}[thm]{\bf Lemma}
\newtheorem{prop}[thm]{\bf Proposition}
\newenvironment{proof}{\noindent{\em Proof:}}{\quad \hfill$\Box$\vspace{2ex}}
\newenvironment{remark}{\noindent{\bf Remark}}{\vspace{2ex}}
\title{On Monogenic Reproducing Kernel Hilbert Spaces of the Paley-Wiener Type \footnotetext{This work was supported in part by the Science and Technology Development Fund, Macau SAR: 154/2017/A3; NSFC Grant No. 11701597; NSFC Grant No. 11901594; The Science and Technology Development Fund, Macau SAR: 079/2016/A2, 0123/2018/A3.}}
\author{Pei Dang, Weixiong Mai\thanks{Corresponding author}, Tao Qian}
\date{}
\begin{document}
 \maketitle
 \begin{center}
 	\begin{minipage}{120mm}
 		\begin{center}{\bf Abstract}\end{center}
 		{
 			In the Clifford algebra setting the present study develops three reproducing kernel Hilbert spaces of the Paley-Wiener type, namely the Paley-Wiener spaces, the Hardy spaces on strips, and the Bergman spaces on strips. In particular, we give spectrum characterizations and representation formulas of the functions in those spaces and estimation of their respective reproducing kernels.
 			
 			{\bf Key words}: Reproducing Kernel, Paley-Wiener Theorem, Monogenic Function, Fourier Spectrum \\
 		}
 	\end{minipage}
 \end{center}
 \tableofcontents
 \section{Introduction}
In this paper we will study three reproducing kernel Hilbert spaces (RKHS) in the Clifford algebra setting. They are the $PW(\pi,\mathbf C^{(m)})$ Paley-Wiener space, the $H^2(S_a,\mathbf C^{(m)})$ Hardy space on a strip $S_a,$  and the $A^2(S_a,\mathbf C^{(m)})$ Bergman space on a strip $S_a$, where $S_a=\{x=x_0+\ulx\in \mathbf R^{m+1}:\ulx\in \mathbf R^m, |x_0|<a\}\subset \mathbf R^{m+1}.$ The three spaces are closely related to the well-known classical Paley-Wiener theorems referred to the Hardy $H^2$ space in the upper-half complex plane and entire functions with certain exponential increasing at the infinity on the whole complex plane (\cite{Paley-Wiener}).
The upper-half complex plane Hardy space version is stated as follows: $f\in L^2(\mathbf R)$ is the nontangential boundary limit (NTBL) function of some function in the Hardy $H^2$ space of the upper-half plane (denoted by $H^2(\mathbf C_+)$) if and only if $\hat f= \chi_{[0,\infty)}\hat f,$ where $\hat f$ is the Fourier transform of $f,$ which is phrased as the non-compact type Paley-Wiener theorem in this paper. The entire function version is that $f\in L^2(\mathbf R)$ is the restriction of an entire function $f(z)$ with the bounds $C\exp (\pi |z|)$ if and only if $\hat f=\chi_{[-\pi,\pi]}\hat f.$ This will be phrased as the compact type Paley-Wiener theorem in this paper.

There exist analogous results of the Paley-Wiener theorems in higher dimensions, that are formulated with, respectively, the several complex variables and the Clifford algebra settings.

In the several complex variables setting the Paley-Wiener theorem is for the Hardy spaces on tubes over regular cones, $H^2(T_\Gamma),$ where $\Gamma\subset\mathbf R^m$ is any regular cone and $T_\Gamma=\{x+iy\in \mathbf C^m; x\in \mathbf R^m,y\in \Gamma\}$ (see \cite{SW}), as a generalization of the non-compact case.  The Paley-Wiener Theorem states that $f\in H^2(T_\Gamma)$ if and only if $\hat f=\chi_{\Gamma^*}\hat f,$ where $\Gamma^*$ is the dual cone of $\Gamma.$ We cite also analogous results for Bergman spaces on tubes over regular cones (see e.g. \cite{BBGNPR,BG,Genchev}).
As an analogue of the compact case, the Paley-Wiener theorem is generalized to entire functions of several complex variables with the exponential type bounds (see e.g. \cite{SW}). This type of holomorphic functions corresponding to those whose Fourier transforms $\mathbf R^m$ are supported in compact convex sets.

In the Clifford algebra setting a compact type Paley-Wiener theorem is obtained in \cite{Kou-Qian}. A standard non-compact version is as follows. Denote by $H^2(\mathbf R^{m+1}_+,\mathbf C^{(m)})$ the $\mathbf C^{(m)}$-valued Hardy space on the upper-half space, $\mathbf R^{m+1}_+=\{x=x_0+\ulx\in \mathbf R^{m+1}_+: x_0>0,\ulx\in \mathbf R^m \}$. Then $f\in H^2(\mathbf R^{m+1}_+,\mathbf C^{(m)})$ if and only if the nontangential boundary limit $f$ satisfies $\hat f=\chi_+\hat f,$ where $\chi_+(\ulxi)= \frac{1}{2}(1+i\frac{\ulxi}{|\ulxi|}).$ Moreover, the last relation holds if and only if $f=\frac{1}{2}(I+H)f,$ where $H=-\sum_{j=1}^m R_j\bfe_j$ and $R_j$'s are the Riesz transformations. This result is an alternative version of the result on the conjugate harmonic systems \cite{SW,Stein2}. So far the Paley-Wiener type theorems have been extensively studied that include generalizations in the distribution sense to the $L^p$ cases, $1\leq p\leq \infty$, as well as analogues in the Bergman and Dirichlet spaces (see e.g. \cite{Bernstein,Q1,QXYYY,Li-Deng-Qian,Dang-Mai-Qian,Schwartz,Hor,Gilbert-Murray,Durenetal,Garrigos}).

The aim of the present paper is two-fold. One is to obtain the Fourier transform characterizations of the above mentioned Clifford monogenic spaces; and the other is to show that they are reproducing kernel Hilbert spaces (RKHSs). Their reproducing kernels are computed and estimated. 

Denote by $P(w,\overline x), S(w,\overline x)$ and $B(w,\overline x)$ the reproducing kernels of, respectively, $PW(\pi,\mathbf C^{(m)})$, $H^2(S_a,\mathbf C^{(m)})$ and $A^2(S_a,\mathbf C^{(m)}).$

We will show
\begin{align*}
P(w,\overline x)=\frac{1}{(2\pi)^m}\int_{\mathbf R^m} e(w+\overline x,\ulxi)\chi_{B(0;\pi)}(\ulxi)d\ulxi,
\end{align*}
\begin{align*}
S(w,\overline x)=\frac{1}{(2\pi)^m}\int_{\mathbf R^m} e(w+\overline x,\ulxi)e^{-2a|\ulxi|}d\ulxi,
\end{align*}
and
\begin{align*}
B(w,\overline x)=\frac{1}{(2\pi)^m}\int_{\mathbf R^m}e(w+\overline x,\ulxi) \frac{2|\ulxi|}{e^{2a|\ulxi|}-e^{-2a|\ulxi|}} d\ulxi,
\end{align*}
where $e(x,\ulxi)$ is the monogenic exponential function (see \S 2 for details).
In the essence of the Paley-Wiener theorem of $H^2(S_a,\mathbf C^{(m)})$ we give 
\begin{align*}
H^2(S_a,\mathbf C^{(m)})=H^2(\mathbf R^{m+1}_{+,-a},\mathbf C^{(m)})\oplus H^2(\mathbf R^{m+1}_{-,a},\mathbf C^{(m)}),
\end{align*}
where $H^2(\mathbf R^{m+1}_{\pm,\mp a},\mathbf C^{(m)})$ are, respectively, the Hardy spaces on $\mathbf R^{m+1}_{\pm,\mp a}=\{x=x_0+\ulx \in\mathbf R^{{m+1}}: \pm x_0>- a,\ulx\in \mathbf R^m\}.$
Some estimates of $P(w,\overline x),$ $S(w,\overline x)$ and $B(w,\overline x)$ are deduced.

The writing of the paper is organized as follows. In \S 2 notations and terminologies that will be used, as well as an account of the known and relevant results are provided. In \S 3 the spectrum characterizations, representation formulas through the reproducing kernels, are deduced.

 \bigskip
 \section{Preliminaries}
 Denote by $\mathbf R^{(m)}$ ($\mathbf C^{(m)}$) the algebra over the real (complex) number field generated by the basis $\bfe_1,...,\bfe_m$ of $\mathbf R^{m}=\{\ulx=x_1\bfe_1+\cdots+x_m\bfe_m: x_j\in \mathbf R,1\leq j\leq m\},$ where the ${\bfe_j}$'s satisfy the relations $$\bfe_j\bfe_k+ \bfe_k\bfe_j=-2\delta_{jk},\quad j,k=1,...,m,$$ 
 where $\delta_{jk}$ is the Kronecker delta function. We note that $\mathbf R^{(m)}$ ($\mathbf C^{(m)}$) is a particular Clifford algebra with the unit element $\bfe_0 =1.$  

The elements of $\mathbf R^{(m)}$ ($\mathbf C^{(m)}$) are of the form $x=\sum_{T}x_T \bfe_T,$ where $T=\{1\leq j_1<j_2<\cdots<j_l\leq m\}$ runs over all ordered subsets of $\{1,...,m\}$, $x_T\in \mathbf R\  (\mathbf C)$ with $x_{\emptyset}=x_0,$ and $\bfe_T=\bfe_{j_1}\bfe_{j_2}\cdots \bfe_{j_l}$ with the unit element $\bfe_{\emptyset}=\bfe_0=1.$ ${\rm Sc}\{x\}:=x_0$ and ${\rm NSc}\{x\}:=x-\text{Sc }\{x\}$ are respectively called the scalar part and the non-scalar part of $x.$
In this paper, we denote the conjugate of $x\in \mathbf R^{(m)} (\mathbf C^{(m)})$ by $\overline x=\sum_{T}\overline x_T \overline \bfe_T$, where $\overline \bfe_T=\overline \bfe_{j_l}\cdots \overline \bfe_{j_2}\overline \bfe_{j_1}$ with $\overline \bfe_0=\bfe_0$ and $\overline \bfe_j=-\bfe_j$ for $j \neq 0.$ The norm of $x\in \mathbf R^{(m)} (\mathbf C^{(m)})$ is defined as $|x|:=(\text{Sc }\{\overline x x\})^\frac{1}{2}=(\sum_{T}|x_T|^2)^{\frac{1}{2}}.$ $x=x_0+\ulx\in \mathbf R^{m+1}=\{x=x_0+\ulx:x_0\in\mathbf R,\ulx\in \mathbf R^m\}$ is called a para-vector, and the conjugate of a para-vector $x$ is $\overline x=x_0-\ulx.$ If $x$ is a para-vector then $x^{-1}=\frac{\overline x}{|x|^2}.$ For more information about Clifford algebra, we refer to \cite{Brackx-Delanghe-Sommen}.

Let $\Omega$ be an open subset of $\mathbf R^{m+1}.$ A $\mathbf C^{(m)}$-valued function $F$ on $\Omega $ is left-monogenic (resp. right-monogenic) if
\begin{align*}
DF=\sum_{k=0}^m \bfe_k\partial_kF=0\ \left(\text{resp. } FD=\sum_{k=0}^m \partial_kF\bfe_k=0\right),\quad \text{in } \Omega, 
\end{align*}
where $\partial_k=\frac{\partial}{\partial x_k},0\leq k\leq m,$ and $D$ is the Dirac operator. Note that $\overline D(DF)=\Delta F=0$ if $F$ is left-monogenic, which means that each component of a left-monogenic function $F$ is harmonic.
A function that is both left- and right-monogenic is called a monogenic function. Para-vector-valued left-monogenic functions are simultaneously right-monogenic functions, and vice-versa, and thus they are monogenic.

The Fourier transform of a function in $L^1(\mathbf R^m)$ is defined as
$$
\hat f(\ulxi)=\mathcal F(f)(\ulxi)=\int_{\mathbf R^m}e^{-i\langle \ulx,\ulxi\rangle}f(\ulx)d\ulx,
$$
where $\ulxi=\xi_1 \bfe_1+\cdots +\xi_n \bfe_n\in\mathbf R^m,$ and the inverse Fourier transform is formally defined as
$$
 g^\vee(\ulx)=\mathcal F^{-1}(g)(\underline x)=\frac{1}{(2\pi)^m}\int_{\mathbf R^m}e^{i\langle \underline x,\underline \xi \rangle}g(\underline \xi)d\underline \xi.
$$

  The Fourier transformation is linear and thus it, together with some of its properties, can be extended to $\mathbf C^{(m)}$-valued functions. In particular, the Plancherel theorem holds for $\mathbf C^{(m)}$-valued functions: For $\mathbf C^{(m)}$-valued functions $f, g\in L^2(\mathbf R^m,\mathbf C^{(m)})$ there holds
  \begin{align}\label{Plancherel}
  \int_{\mathbf R^m} \overline f(\ulx) g(\ulx)d\ulx = \int_{\mathbf R^m} \overline{\hat{f}}(\ulxi){\hat{g}}(\ulxi)d\ulxi.
  \end{align}

\noindent Define, for $x=x_0+\ulx,$
$$
e(x,\underline \xi)=e^+(x,\ulxi)+e^-(x,\ulxi)
$$
with
$$
e^{\pm}(x,\ulxi)=e^{ i\langle \ulx,\ulxi\rangle}e^{\mp x_0|\ulxi|}\chi_{\pm}(\ulxi),
$$
where $\chi_\pm(\ulxi)=\frac{1}{2}(1\pm i\frac{\ulxi}{|\ulxi|})$ (see e.g. \cite{Li-Mc-Qian}). $\chi_\pm(\ulxi)$ enjoy the projection properties:
\begin{align}\label{important}
\chi_-(\ulxi)\chi_+(\ulxi)=\chi_+(\ulxi)\chi_-(\ulxi)=0,\quad \chi^2_\pm(\ulxi)=\chi_\pm(\ulxi),\quad\chi_+(\ulxi)+\chi_-(\ulxi)=1.
\end{align}

In the following we first state two existing Paley-Wiener theorems in the Clifford algebra setting.
In \cite{Kou-Qian} the following Paley-Wiener theorem is proved.
\begin{prop}[\cite{Kou-Qian}]\label{PW}
Let $f\in L^2(\mathbf R^m,\mathbf C^{(m)}),$ and $R$ a positive number. Then the following two conditions are equivalent:\\
\noindent (i) $f$ may be left-monogenically extended to the whole $\mathbf R^{m+1},$ and there exists a constant $C$ such that $|f(x)|\leq Ce^{R|x|}$ for all $x=x_0+\ulx\in \mathbf R^{m+1}$;\\
\noindent (ii) {\rm supp}$\mathcal F(f)\subset B(0,R)$, where $B(0,R)$ is the ball centered at the origin with radius $R$.
Moreover, if these conditions hold, then
$$
f(x)=\frac{1}{(2\pi)^m}\int_{B(0,R)} e(x,\ulxi)\mathcal F(f)(\ulxi) d\ulxi, \quad x\in \mathbf R^{m+1}.
$$
\end{prop}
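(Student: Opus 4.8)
The plan is to establish the two implications separately and then read off the representation formula; the second implication carries essentially all the difficulty.

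\emph{From (ii) to (i), and the formula.} Assuming ${\rm supp}\,\mathcal F(f)\subset B(0,R)$, I would \emph{define} $F(x):=\frac{1}{(2\pi)^m}\int_{B(0,R)}e(x,\ulxi)\mathcal F(f)(\ulxi)\,d\ulxi$ and verify it has the three properties in (i). Since $B(0,R)$ has finite measure, $\mathcal F(f)\in L^2(B(0,R))\subset L^1(B(0,R))$; and since $|e^{i\langle\ulx,\ulxi\rangle}|=1$ and $|\chi_\pm(\ulxi)|=\frac{1}{\sqrt2}$, one has $|e(x,\ulxi)|\le e^{-x_0|\ulxi|}|\chi_+(\ulxi)|+e^{x_0|\ulxi|}|\chi_-(\ulxi)|\le\sqrt2\,e^{|x_0|\,|\ulxi|}\le\sqrt2\,e^{R|x_0|}$ on $B(0,R)$, so the integral converges absolutely and locally uniformly in $x$; hence $F$ is continuous and $|F(x)|\le Ce^{R|x_0|}\le Ce^{R|x|}$. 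For left-monogenicity I would first record that $x\mapsto e(x,\ulxi)$ is left-monogenic for each fixed $\ulxi$: a direct differentiation gives $De^{\pm}(x,\ulxi)=e^{i\langle\ulx,\ulxi\rangle}e^{\mp x_0|\ulxi|}(\mp|\ulxi|+i\ulxi)\chi_\pm(\ulxi)$, and $(\mp|\ulxi|+i\ulxi)\chi_\pm(\ulxi)=\mp|\ulxi|(1\mp i\ulxi/|\ulxi|)\chi_\pm(\ulxi)=\mp2|\ulxi|\chi_\mp(\ulxi)\chi_\pm(\ulxi)=0$ by \eqref{important}; the same type of bound on $\partial_ke(x,\ulxi)$ over $B(0,R)$ licenses differentiation under the integral sign, so $DF\equiv0$ on $\mathbf R^{m+1}$. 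Finally, as $x_0\to0$ we have $e(x,\ulxi)\to e^{i\langle\ulx,\ulxi\rangle}(\chi_++\chi_-)=e^{i\langle\ulx,\ulxi\rangle}$ uniformly for $\ulxi\in B(0,R)$, so $F(x_0+\cdot)\to\mathcal F^{-1}(\chi_{B(0,R)}\mathcal F(f))=\mathcal F^{-1}(\mathcal F(f))=f$ in $L^2(\mathbf R^m)$, i.e. $F$ extends $f$ left-monogenically and satisfies (i). Once the converse is proved, this same $F$ is exactly the function in the representation formula, and it must agree with any other left-monogenic extension of $f$ by uniqueness of monogenic continuation.

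\emph{From (i) to (ii).} Let $F$ be the left-monogenic extension with $|F(x)|\le Ce^{R|x|}$, write $f_t:=F(t+\cdot)$, and pass to the Fourier transform in the $\ulx$-variables with $t$ as a parameter. The equation $DF=0$, i.e. $\partial_{x_0}F=-\sum_{j=1}^m\bfe_j\partial_{x_j}F$, becomes the first-order system $\partial_t\widehat{f_t}(\ulxi)=-i\ulxi\,\widehat{f_t}(\ulxi)$, whence $\widehat{f_t}(\ulxi)=e^{-it\ulxi}\mathcal F(f)(\ulxi)=(e^{-t|\ulxi|}\chi_+(\ulxi)+e^{t|\ulxi|}\chi_-(\ulxi))\mathcal F(f)(\ulxi)$, using $\ulxi^2=-|\ulxi|^2$ and $(i\ulxi/|\ulxi|)^2=1$ with spectral projections $\chi_\pm$. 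Since $\overline{\chi_\pm(\ulxi)}=\chi_\pm(\ulxi)$ and $\chi_+\chi_-=0$, \eqref{Plancherel} gives $\|f_t\|_{L^2}^2=\frac{1}{(2\pi)^m}\int_{\mathbf R^m}(e^{-2t|\ulxi|}|\chi_+(\ulxi)\mathcal F(f)(\ulxi)|^2+e^{2t|\ulxi|}|\chi_-(\ulxi)\mathcal F(f)(\ulxi)|^2)\,d\ulxi$. I would then invoke the a priori estimate $\|f_t\|_{L^2(\mathbf R^m)}\le Ce^{R|t|}$ (it suffices to have $\limsup_{|t|\to\infty}|t|^{-1}\log\|f_t\|_{L^2}\le R$): for any $R'>R$ and $t>0$, $e^{2tR'}\int_{|\ulxi|>R'}|\chi_-(\ulxi)\mathcal F(f)(\ulxi)|^2\,d\ulxi\le(2\pi)^m\|f_t\|_{L^2}^2\le Ce^{2Rt}$, and letting $t\to+\infty$ forces $\chi_-\mathcal F(f)=0$ a.e. on $\{|\ulxi|>R\}$; letting $t\to-\infty$ similarly forces $\chi_+\mathcal F(f)=0$ a.e. there, so $\mathcal F(f)=\chi_+\mathcal F(f)+\chi_-\mathcal F(f)=0$ a.e. on $\{|\ulxi|>R\}$, which is (ii). The representation formula then follows from the $t=x_0$ case of the displayed expression for $\widehat{f_t}$ (or from the uniqueness remark above).

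\emph{Where the work lies.} The one nonroutine ingredient is the a priori $L^2$-control of the slices $f_t$. The hypothesis provides only the pointwise bound $|F(x)|\le Ce^{R|x|}$, which is not integrable over the hyperplanes $\{x_0=t\}$, so upgrading it to the exponential-rate bound $\|f_t\|_{L^2}\lesssim e^{R|t|}$ is a Plancherel--P\'olya / Phragm\'en--Lindel\"of type statement: I expect to get it from the sub-mean-value inequality for the subharmonic function $|F|^2$ on $\mathbf R^{m+1}$, the log-convexity in $t$ of $t\mapsto\log\|f_t\|_{L^2}$, the hypothesis $f_0=f\in L^2$, and the growth bound, rather than from any crude estimate. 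Everything else amounts to bookkeeping with the projections $\chi_\pm$ (the identities in \eqref{important}), the left-monogenicity of $e(x,\ulxi)$, and Plancherel's theorem.
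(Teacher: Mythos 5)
First, a point of reference: the paper itself gives no proof of Proposition \ref{PW} — it is quoted from Kou--Qian \cite{Kou-Qian} as a known result — so there is no in-text argument to compare yours against; what follows is an assessment of your proposal on its own terms.

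Your direction (ii) $\Rightarrow$ (i), together with the representation formula, is correct and essentially complete: the bounds $|\chi_\pm(\ulxi)|=1/\sqrt2$ and $|e(x,\ulxi)|\le\sqrt2\,e^{R|x_0|}$ on $B(0,R)$, the computation $De^{\pm}(x,\ulxi)=\mp2|\ulxi|\,e^{i\langle\ulx,\ulxi\rangle}e^{\mp x_0|\ulxi|}\chi_\mp(\ulxi)\chi_\pm(\ulxi)=0$, and the identification of the restriction to $x_0=0$ with $f$ all check out. The problem is in (i) $\Rightarrow$ (ii), and it sits exactly where you locate it, but the route you propose to close it does not work as stated. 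Everything in that direction hinges on the a priori estimate $\|F(t+\cdot)\|_{L^2(\mathbf R^m)}\le Ce^{R|t|}$ — indeed even on the qualitative fact that $F(t+\cdot)\in L^2(\mathbf R^m)$ for $t\ne0$, without which $\widehat{f_t}$ and the evolution equation $\partial_t\widehat{f_t}=-i\ulxi\widehat{f_t}$ are not even defined. The sub-mean-value inequality for the subharmonic function $|F|^2$ only yields $\int_{\mathbf R^m}|F(t+\ulx)|^2d\ulx\le C\int_{t-1}^{t+1}\int_{\mathbf R^m}|F(s+\uly)|^2d\uly\,ds$, and this cannot be seeded by the single hypothesis $F(0+\cdot)=f\in L^2(\mathbf R^m)$, because the hyperplane $\{x_0=0\}$ has measure zero in the slab on the right-hand side; likewise, log-convexity of $t\mapsto\log\|f_t\|_{L^2}$ presupposes the very finiteness you are trying to establish. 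This is a genuine Plancherel--P\'olya/Phragm\'en--Lindel\"of statement and is the actual content of the Kou--Qian theorem; it requires a different mechanism (for instance, the Cauchy--Kovalevskaya expansion of $F$ in powers of $x_0$ combined with Cauchy-type derivative estimates and a Bernstein-type characterization of compact Fourier support, or a reduction of the harmonic components to the several-variable Paley--Wiener theorem of Stein--Weiss). Until that estimate is supplied, the hard implication — and with it the equivalence — is not proved.
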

By Proposition \ref{PW}, we can define the Paley-Wiener space $PW(\frac{\pi}{h},\mathbf C^{(m)}),h>0,$ as follows. We say $f\in PW(\frac{\pi}{h},\mathbf C^{(m)})$ if $f$ satisfies one of the conditions (i) and (ii) with $R=\frac{\pi}{h}$ in Proposition \ref{PW}. 
 $PW(\frac{\pi}{h},\mathbf C^{(m)})$ is equipped with the inner product
\begin{align*}
\langle f, g\rangle_{PW} = \int_{\mathbf R^m} \overline g(\ulx) f(\ulx) d\ulx, \quad f,g\in PW(\frac{\pi}{h},\mathbf C^{(m)}),
\end{align*} 
and 
\begin{align*}
||f||_{PW}^2 = \text{Sc} (\langle f,f\rangle_{PW}).
\end{align*}
Without loss of generality, we let $h=1.$
Furthermore, the ${\rm sinc}$ function, which is closely related to the reproducing kernel of $PW(\pi,\mathbf C^{(m)}),$ is defined in \cite{Kou-Qian2}, as
\begin{align}\label{sinc}
{\rm sinc}_C(x)=\frac{1}{(2\pi)^m}\int_{\mathbf R^m} e(x,\ulxi)\chi_{[-\pi,\pi]^m}(\ulxi)d\ulxi ,\quad x\in \mathbf R^{m+1},
\end{align}
with the estimation given in the following Lemma.
\begin{lem}[\cite{Kou-Qian}]\label{key-lemma}
There holds
$$
|{\rm sinc}_C(x)|\leq \frac{P(|x_0|)e^{\sqrt{m}\pi |x_0|}}{\prod_{j=1}^m(1+|x_j|)}, \quad x\in \mathbf R^{m+1},
$$
where $P(|x_0|)$ is a polynomial of $|x_0|.$
\end{lem}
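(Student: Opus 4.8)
The plan is to strip the apparent singularity of $e(x,\ulxi)$ at $\ulxi=0$ by an algebraic manipulation of the projections $\chi_\pm$, and then treat ${\rm sinc}_C(x)$ as an ordinary oscillatory integral over the cube and integrate by parts once in each coordinate belonging to a suitably chosen index set. First I would substitute $\chi_\pm(\ulxi)=\tfrac12\bigl(1\pm i\tfrac{\ulxi}{|\ulxi|}\bigr)$ into $e^\pm$ and collect terms, obtaining, for each fixed $x=x_0+\ulx$,
\[
e(x,\ulxi)=e^{i\langle\ulx,\ulxi\rangle}\Bigl(\cosh(x_0|\ulxi|)-i\,\ulxi\,\tfrac{\sinh(x_0|\ulxi|)}{|\ulxi|}\Bigr).
\]
The point is that $\cosh(x_0r)=\sum_{n\ge0}\frac{x_0^{2n}r^{2n}}{(2n)!}$ and $\frac{\sinh(x_0r)}{r}=x_0\sum_{n\ge0}\frac{x_0^{2n}r^{2n}}{(2n+1)!}$ are even entire functions of $r$, so after substituting $r^2=|\ulxi|^2=\xi_1^2+\cdots+\xi_m^2$ both $\cosh(x_0|\ulxi|)$ and $\tfrac{\sinh(x_0|\ulxi|)}{|\ulxi|}$ become convergent power series in $\ulxi$; hence $\phi(x_0,\ulxi):=\cosh(x_0|\ulxi|)-i\ulxi\tfrac{\sinh(x_0|\ulxi|)}{|\ulxi|}$ is real-analytic on all of $\mathbf R^m$, the singularities of $\chi_\pm$ at the origin having cancelled.

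Next I would record derivative estimates for $\phi$ on the cube. From the series above, repeated use of the Leibniz and chain rules gives, for every multi-index $\alpha$,
\[
\bigl|\partial^\alpha_{\ulxi}\phi(x_0,\ulxi)\bigr|\le C_\alpha\,(1+|x_0|)^{2|\alpha|+1}\,e^{|x_0|\,|\ulxi|},\qquad \ulxi\in[-\pi,\pi]^m,
\]
since each $\ulxi$-differentiation either produces a polynomial factor in $\ulxi$ of growth $O\bigl((1+|x_0|)^2\bigr)$ (bounded on the cube) or differentiates the remaining entire function of $x_0^2|\ulxi|^2$, which stays $\le e^{|x_0|\,|\ulxi|}$ on $[0,\infty)$; on $[-\pi,\pi]^m$ we have $|\ulxi|\le\sqrt m\,\pi$, so the right-hand side is at most $C_\alpha(1+|x_0|)^{2|\alpha|+1}e^{\sqrt m\,\pi|x_0|}$.

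Then, for a fixed subset $S\subseteq\{1,\dots,m\}$, I would integrate ${\rm sinc}_C(x)=\frac1{(2\pi)^m}\int_{[-\pi,\pi]^m}e^{i\langle\ulx,\ulxi\rangle}\phi(x_0,\ulxi)\,d\ulxi$ by parts once in each variable $\xi_j$ with $j\in S$. Each such step contributes a factor $1/(ix_j)$ and replaces $\phi$ either by $\partial_{\xi_j}\phi$ or by its restriction to a face $\xi_j=\pm\pi$; iterating over $S$ writes ${\rm sinc}_C(x)$ as a sum of at most $3^{|S|}$ terms, each of them a lower-dimensional integral over a sub-box of an integrand $e^{i\langle\ulx,\ulxi\rangle}\partial^\gamma_{\ulxi}\phi$ with $\gamma_j\le1$ and ${\rm supp}\,\gamma\subseteq S$, times $\prod_{j\in S}(ix_j)^{-1}$ and unimodular phases $e^{\pm i\pi x_j}$. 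Because $\phi$ is smooth on the closed cube there is no integrability obstruction, and the derivative bound of the previous step yields $|{\rm sinc}_C(x)|\le C(1+|x_0|)^{2m+1}e^{\sqrt m\,\pi|x_0|}\prod_{j\in S}|x_j|^{-1}$ for every $S$. Finally, given $x$ I would take $S=\{j:|x_j|\ge1\}$; then $|x_j|^{-1}\le2(1+|x_j|)^{-1}$ for $j\in S$ and $1\le2(1+|x_j|)^{-1}$ for $j\notin S$, so $\prod_{j\in S}|x_j|^{-1}\le2^m\prod_{j=1}^m(1+|x_j|)^{-1}$, which gives the claimed estimate with $P(t)=2^mC(1+t)^{2m+1}$.

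The only genuine obstacle is the first step: a brute-force integration by parts applied directly to $e^\pm(x,\ulxi)=e^{i\langle\ulx,\ulxi\rangle}e^{\mp x_0|\ulxi|}\chi_\pm(\ulxi)$ runs into a non-integrable singularity of order $|\ulxi|^{-|S|}$ as soon as $|S|$ reaches $m$, because $\partial^\gamma\chi_\pm$ blows up like $|\ulxi|^{-|\gamma|}$ at the origin. Observing that the $\chi_+$ and $\chi_-$ parts combine into the manifestly smooth $\phi$ is exactly what lets the argument produce decay in all $m$ spatial variables simultaneously; once that is in hand, the remaining steps are routine bookkeeping of constants and of the powers of $|x_0|$.
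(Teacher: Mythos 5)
Your proof is correct, and it supplies an argument where the paper gives none: Lemma~\ref{key-lemma} is simply quoted from \cite{Kou-Qian}, so the only in-paper point of comparison is the proof of the analogous estimate for ${\rm sinc}_B$ in Lemma~\ref{PW-infty}. The two routes are genuinely different and each is adapted to its cutoff. Your key observation --- that the $|\ulxi|^{-1}$ singularities of $\chi_+$ and $\chi_-$ cancel, leaving $e(x,\ulxi)=e^{i\langle\ulx,\ulxi\rangle}\bigl(\cosh(x_0|\ulxi|)-i\ulxi\,\tfrac{\sinh(x_0|\ulxi|)}{|\ulxi|}\bigr)$ with a factor that is entire in $\ulxi$ (a power series in $\xi_1^2+\cdots+\xi_m^2$) --- is exactly what makes coordinatewise integration by parts over the cube $[-\pi,\pi]^m$ legitimate, and the choice $S=\{j:|x_j|\ge 1\}$ then delivers the anisotropic decay $\prod_j(1+|x_j|)^{-1}$, which is the natural shape of bound for a product-type spectral cutoff. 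The paper's method for the ball cutoff instead passes to spherical coordinates, reduces the angular integral to Bessel functions via $\int_{-1}^1 e^{ir|\ulx|\eta}(1-\eta^2)^{(m-3)/2}d\eta$, and uses $J_k(t)=O(t^{-1/2})$ together with one radial integration by parts; that yields the rotation-invariant decay $|\ulx|^{-(m+1)/2}$ but cannot produce a product over coordinates. Your derivative bound $|\partial^\alpha_{\ulxi}\phi|\le C_\alpha(1+|x_0|)^{2|\alpha|+1}e^{|x_0||\ulxi|}$ is the one point that deserves the explicit justification you sketch (differentiate the power series termwise rather than applying the chain rule to $|\ulxi|$, so as to avoid reintroducing $1/|\ulxi|$); with that spelled out, the bookkeeping of the $3^{|S|}$ boundary and interior terms and the final passage from $\prod_{j\in S}|x_j|^{-1}$ to $2^m\prod_{j=1}^m(1+|x_j|)^{-1}$ is complete and gives the stated polynomial $P$.
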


The other Paley-Wiener theorem concerns the Hardy space $H^2(\mathbf R^{m+1}_+, \mathbf C^{(m)})$, where 
\begin{align*}
H^2(\mathbf R^{m+1}_+,\mathbf C^{(m)}) = \left \{f \text{ is lef-monogenic in }\mathbf R^{m+1}_+: ||f||_{H^2(\mathbf R^{m+1},\mathbf C^{(m)})} = \sup_{x_0>0}\int_\mathbf{R^m} |f(x_0+\ulx)|^2 d\ulx <\infty \right\}
\end{align*} 
and $\mathbf R^{m+1}_+=\{x=x_0+\ulx\in \mathbf R^{m+1}:x_0>0\},$ which plays a role in our study. The statement is as follows.
\begin{prop}[see e.g. \cite{Gilbert-Murray,Mitrea}]\label{PW-hardy}
 $f\in H^2(\mathbf R^{m+1}_{+}, \mathbf C^{(m)})$ if and only if there exists a measurable function $g$ in $\mathbf R^m$ such that
\begin{align*}
g(\ulxi)\in L^2(\mathbf R^m, \mathbf C^{(m)})
\end{align*}
and
\begin{align*}
f(x) = \frac{1}{(2\pi)^m}\int_{\mathbf R^m} e^{+}(x,\ulxi) g(\ulxi) d\ulxi, \quad x\in \mathbf R^{m+1}_+,
\end{align*}
showing that $g(\ulxi)=\mathcal F(f)(\ulxi).$
\end{prop}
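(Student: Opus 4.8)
I would prove the two implications of the equivalence separately, beginning with the easier sufficiency. Assume $g\in L^2(\mathbf R^m,\mathbf C^{(m)})$ and let $f$ be given on $\mathbf R^{m+1}_+$ by the stated integral. First I would check that each $e^+(\cdot,\ulxi)$ is left-monogenic there: using $\ulxi^2=-|\ulxi|^2$ one gets $D_xe^+(x,\ulxi)=(i\ulxi-|\ulxi|)e^{i\langle\ulx,\ulxi\rangle}e^{-x_0|\ulxi|}\chi_+(\ulxi)=0$, because $(i\ulxi-|\ulxi|)\chi_+(\ulxi)=0$. On any slab $\{x_0\ge\delta\}$ the integrand and each of its $x$-derivatives is dominated by $C(1+|\ulxi|)^Ne^{-\delta|\ulxi|}|g(\ulxi)|$, which lies in $L^1(d\ulxi)$ by the Cauchy--Schwarz inequality and $g\in L^2$; hence differentiation under the integral sign is legitimate and $D_xf=0$ on $\mathbf R^{m+1}_+$. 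For the growth bound, $\ulx\mapsto f(x_0+\ulx)$ is the inverse Fourier transform of $e^{-x_0|\cdot|}\chi_+g$, so by \eqref{Plancherel} $\int_{\mathbf R^m}|f(x_0+\ulx)|^2\,d\ulx=c\int_{\mathbf R^m}e^{-2x_0|\ulxi|}|\chi_+(\ulxi)g(\ulxi)|^2\,d\ulxi\le c\|g\|_{L^2}^2$ uniformly in $x_0>0$; therefore $f\in H^2(\mathbf R^{m+1}_+,\mathbf C^{(m)})$.

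For the necessity, let $f\in H^2(\mathbf R^{m+1}_+,\mathbf C^{(m)})$ and set $F(x_0,\ulxi)=\mathcal F\big(f(x_0+\cdot)\big)(\ulxi)$ for $x_0>0$. Since every component of the monogenic function $f$ is harmonic, the classical interior estimate for harmonic functions bounds $\|\partial_kf(x_0+\cdot)\|_{L^2(\mathbf R^m)}$, $0\le k\le m$, uniformly on each region $\{x_0\ge\delta\}$ by a $\delta$-dependent constant times $\|f\|_{H^2}$; this lets me differentiate $F$ in $x_0$ with values in $L^2(\mathbf R^m)$ and take the $\ulx$-Fourier transform of the monogenic equation $\sum_{k=0}^m\bfe_k\partial_kf=0$. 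Using $\mathcal F(\partial_0f(x_0+\cdot))=\partial_0F(x_0,\cdot)$ and $\mathcal F(\partial_jf(x_0+\cdot))=i\xi_jF(x_0,\cdot)$, this turns into the first-order system $\partial_0F(x_0,\ulxi)+i\ulxi F(x_0,\ulxi)=0$ in $L^2(\mathbf R^m,d\ulxi)$, valid for every $x_0>0$.

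I would then diagonalise the system with the projectors $\chi_\pm$. With $u_\pm=\chi_\pm F$ and the identities $\chi_\pm(\ulxi)\ulxi=\ulxi\chi_\pm(\ulxi)$ and $i\ulxi\chi_\pm(\ulxi)=\pm|\ulxi|\chi_\pm(\ulxi)$ (both consequences of $\ulxi^2=-|\ulxi|^2$), the system uncouples into $\partial_0u_\pm=\mp|\ulxi|u_\pm$, so $u_\pm(x_0,\ulxi)=e^{\mp(x_0-s)|\ulxi|}u_\pm(s,\ulxi)$ for $0<s<x_0$. From \eqref{important} and $\overline{\chi_\pm}=\chi_\pm$ one has $|F|^2=|u_+|^2+|u_-|^2$, so the $H^2$-bound keeps $\int_{\mathbf R^m}e^{2(x_0-s)|\ulxi|}|u_-(s,\ulxi)|^2\,d\ulxi$ bounded as $x_0\to\infty$, which by monotone convergence forces $u_-(s,\cdot)\equiv0$; hence $\chi_+F=F$. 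Consequently $g(\ulxi):=e^{x_0|\ulxi|}F(x_0,\ulxi)$ is independent of $x_0$ and satisfies $\chi_+g=g$, and since $\int_{\mathbf R^m}e^{-2x_0|\ulxi|}|g(\ulxi)|^2\,d\ulxi=\int_{\mathbf R^m}|F(x_0,\ulxi)|^2\,d\ulxi$ stays bounded as $x_0\to0^+$, monotone convergence gives $g\in L^2(\mathbf R^m,\mathbf C^{(m)})$. As $F(x_0,\ulxi)=e^{-x_0|\ulxi|}\chi_+(\ulxi)g(\ulxi)$ is also in $L^1(d\ulxi)$, Fourier inversion recovers $f(x)=\frac1{(2\pi)^m}\int_{\mathbf R^m}e^+(x,\ulxi)g(\ulxi)\,d\ulxi$ on $\mathbf R^{m+1}_+$, and since $F(x_0,\cdot)\to g$ in $L^2$ as $x_0\to0^+$ the $L^2$ boundary limit of $f$ equals $\mathcal F^{-1}(g)$, so $g=\mathcal F(f)$; uniqueness of $g$ follows from the injectivity of $\mathcal F$.

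The main obstacle is the rigour of passing from $Df=0$ to the Fourier-side ODE: one needs the interior estimates for componentwise-harmonic functions to know that $x_0\mapsto F(x_0,\cdot)$ is $C^1$ into $L^2(\mathbf R^m)$ and that $\partial_0$ commutes with the $\ulx$-Fourier transform. Once that is in place the argument is driven entirely by the explicit exponential profile $e^{\mp x_0|\ulxi|}$, the only other substantive points being the two monotone-convergence passages ($x_0\to\infty$ to annihilate $u_-$, and $x_0\to0^+$ to place $g$ in $L^2$); everything else is bookkeeping with $\bfe_j\bfe_k+\bfe_k\bfe_j=-2\delta_{jk}$, $\ulxi^2=-|\ulxi|^2$ and \eqref{important}. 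An alternative for the necessity is to invoke the classical fact that a componentwise-harmonic function on $\mathbf R^{m+1}_+$ with uniformly $L^2$-bounded slices is the half-space Poisson integral of its $L^2$ boundary data, which yields $\mathcal F(f(x_0+\cdot))=e^{-x_0|\ulxi|}\mathcal F(f_0)$ at once; the system-ODE argument above has the merit of being self-contained.
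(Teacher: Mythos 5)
The paper offers no proof of Proposition \ref{PW-hardy}: it is quoted as a known result from \cite{Gilbert-Murray,Mitrea}, so there is nothing internal to compare your argument against. Your proof is essentially correct and self-contained. The sufficiency direction is standard and you verify the two key algebraic identities $(i\ulxi-|\ulxi|)\chi_+(\ulxi)=0$ and the Plancherel bound correctly. The necessity direction via the Fourier-side ODE $\partial_0F+i\ulxi F=0$, diagonalised by $\chi_\pm$ using $i\ulxi\chi_\pm=\pm|\ulxi|\chi_\pm$, with $u_-$ annihilated by letting $x_0\to\infty$ and $g$ placed in $L^2$ by letting $x_0\to0^+$, is a legitimate and rather transparent route; it is closer in spirit to the conjugate-harmonic-system treatment in Stein--Weiss than to the Cauchy/Szeg\H{o}-projection arguments in the cited references, and it has the advantage of making the exponential profile $e^{\mp x_0|\ulxi|}$ appear as the unique solution of an ODE rather than as a computed kernel. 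Two small points deserve tightening. First, integrating $\partial_0u_+=-|\ulxi|u_+$ to get $u_+(x_0,\cdot)=e^{-(x_0-s)|\cdot|}u_+(s,\cdot)$ involves the unbounded multiplier $e^{x_0|\ulxi|}$ on $L^2$; one should first restrict to $\{|\ulxi|\le R\}$, where all multipliers are bounded and the $L^2$-valued ODE integrates freely, and then let $R\to\infty$ to get the identity for a.e.\ $\ulxi$. Second, in the sufficiency direction the boundary limit of your $f$ is $\mathcal F^{-1}(\chi_+g)$, so the concluding identification $g=\mathcal F(f)$ holds verbatim only when $g=\chi_+g$; as stated the proposition implicitly assumes this normalisation (any $g$ can be replaced by $\chi_+g$ without changing $f$), and it is worth saying so explicitly. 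Neither point affects the validity of the argument.
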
 
Furthermore, the $L^p$ version of Proposition \ref{PW-hardy}, $1\leq p\leq \infty$, is stated as follows. Let $\Psi(\mathbf R^m,\mathbf C^{(m)})$ be the Clifford algebra-valued Schwartz space, whose elements are given by
\begin{align*}
\psi(\ulxi)=\sum_{T}\psi_T(\ulxi) \bfe_T,
\end{align*}
where $\psi_T$ are in the Schwartz space $S(\mathbf R^m).$ Denote by $\Psi^\pm(\mathbf R^m,\mathbf C^{(m)})$ the subclasses of $\Psi(\mathbf R^m,\mathbf C^{(m)})$ consisting of the Clifford algebra-valued Schwartz functions of, respectively, the forms
\begin{align*}
\psi(\ulxi)=\psi(\ulxi)\chi_\pm(\ulxi),
\end{align*}
where $\psi(\ulxi)$ takes the zero value in some neighborhood of the origin.
\begin{prop}[see \cite{Dang-Mai-Qian}]
For $f\in H^p(\mathbf R^{m+1}_+,\mathbf C^{(m)})$, $1\leq p\leq \infty,$ there holds
\begin{align*}
(\hat f,\psi) = (f,\hat\psi) =\int_{\mathbf R^m}\hat\psi(\ulx)f(\ulx)d\ulx =0,
\end{align*}
where $\psi\in \Psi^-(\mathbf R^m,\mathbf C^{(m)}).$\\
Conversely, if $f\in L^p(\mathbf R^{m},\mathbf C^{(m)}),1\leq p\leq\infty,$ satisfies $(\hat f,\psi)=0$ for all $\psi\in \Psi(\mathbf R^m,\mathbf C^{(m)})$, then $f(\ulx)$ is the NTBL function of some $f\in H^p(\mathbf R^{m+1}_+,\mathbf C^{(m)}).$
\end{prop}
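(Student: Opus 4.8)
\noindent The plan is to extract both implications from the multiplication formula for the Fourier transform together with the projection identities \eqref{important}. Throughout, for $f\in L^p(\mathbf R^m,\mathbf C^{(m)})$ with $1\le p\le\infty$ the transform $\hat f$ is to be read as a tempered distribution, and $(\hat f,\psi):=(f,\hat\psi)=\int_{\mathbf R^m}\hat\psi(\ulx)f(\ulx)\,d\ulx$ is taken as a definition; the last integral is finite since $\hat\psi$ is again a Clifford algebra-valued Schwartz function, hence lies in $L^{p'}(\mathbf R^m,\mathbf C^{(m)})$. The point of requiring $\psi\in\Psi^-(\mathbf R^m,\mathbf C^{(m)})$ — that is, $\psi=\psi\chi_-$ with $\psi$ vanishing in a neighbourhood of the origin — is precisely to make the identity $\chi_-\chi_+=0$ usable in spite of the discontinuity of $\chi_\pm$ at $\ulxi=0$. (In the converse I would read the hypothesis with $\Psi^-$ in place of $\Psi$; since $\Psi^-\subset\Psi$ this is the weaker, and more natural, assumption.)

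For the direct part I would lift the integral into the interior. For any $x_0>0$ the function $f(x_0+\cdot)$ is smooth, belongs to $L^p(\mathbf R^m,\mathbf C^{(m)})$, and its Fourier transform carries $\chi_+(\ulxi)$ as a left factor, say $\mathcal F\big(f(x_0+\cdot)\big)(\ulxi)=\chi_+(\ulxi)\,g(\ulxi)$; this is immediate from the Cauchy reproducing formula for the Hardy space on the half-space (see \cite{Gilbert-Murray,Mitrea}), $f(x_0+\cdot)=E^+_{x_0-\varepsilon}*f(\varepsilon+\cdot)$ for $0<\varepsilon<x_0$, where $E^+_t$ denotes the level-$t$ slice of the monogenic Cauchy kernel (with Fourier transform $e^{-t|\ulxi|}\chi_+(\ulxi)$), when $1\le p<\infty$, and follows by a weak-$*$ limit when $p=\infty$. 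The multiplication formula together with $\psi=\psi\chi_-$ then gives
\begin{align*}
\int_{\mathbf R^m}\hat\psi(\ulx)\,f(x_0+\ulx)\,d\ulx=\int_{\mathbf R^m}\psi(\ulxi)\,\chi_-(\ulxi)\,\chi_+(\ulxi)\,g(\ulxi)\,d\ulxi=0,
\end{align*}
and letting $x_0\downarrow0$, using $f(x_0+\cdot)\to f$ in $L^p$ for $1\le p<\infty$ (resp.\ boundedly and almost everywhere against $\hat\psi\in L^1$ for $p=\infty$), one obtains $(\hat f,\psi)=0$.

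For the converse I would build the candidate extension by Poisson integration: set $F(x):=(P_{x_0}*f)(\ulx)$ for $x_0>0$, the componentwise harmonic extension of $f$. Classical theory gives $\sup_{x_0>0}\|F(x_0+\cdot)\|_{L^p}\le\|f\|_{L^p}$ and that $f$ is the nontangential boundary limit of $F$, so the only point left is to verify that $F$ is left-monogenic; this is where the hypothesis is used. Taking Fourier transforms in $\ulx$, one finds that $DF(x_0+\cdot)$ has transform $(-|\ulxi|+i\ulxi)\,e^{-x_0|\ulxi|}\hat f(\ulxi)=-2|\ulxi|\,e^{-x_0|\ulxi|}\,\chi_-(\ulxi)\hat f(\ulxi)$, and the assumption $(\hat f,\psi)=0$ for all $\psi\in\Psi^-$ forces $\chi_-\hat f=0$: for a test function $\phi$ vanishing near the origin one has $\phi\chi_-\in\Psi^-$, whence $(\chi_-\hat f,\phi)=(\hat f,\phi\chi_-)=0$; equivalently $\tfrac12(I+H)f=f$ with $H=-\sum_{j=1}^m R_j\bfe_j$. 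Therefore $DF=0$, so $F\in H^p(\mathbf R^{m+1}_+,\mathbf C^{(m)})$ and $f$ is its nontangential boundary limit. For $p=2$ this is consistent with Proposition \ref{PW-hardy}.

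I expect the main obstacle to be the implication ``$(\hat f,\psi)=0$ for all $\psi\in\Psi^-\ \Rightarrow\ \chi_-\hat f=0$''. Since $\chi_\pm$ are not smooth at $\ulxi=0$ and, when $p>2$, $\hat f$ is genuinely a tempered distribution, the product $\chi_-\hat f$ cannot be formed naively, and the tested vanishing above only shows that $\chi_-\hat f$ is supported at the origin; one must then argue that it is zero. This is automatic when $1<p<\infty$, because $\mathcal F(L^p)$ contains no nonzero distribution supported at a point; for $p=1$ it follows from the continuity of $\hat f$; and for $p=\infty$ it requires a separate weak-$*$ argument — here the constants, which belong to $H^\infty$ and whose Fourier transforms are multiples of $\delta_0$, show that the subtlety is genuine, the conclusion being rescued by $\psi(0)=0$. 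A uniform route for all $p$ is a mollification $f\mapsto f*\varphi_n$ with $\widehat{\varphi_n}$ Schwartz, vanishing near the origin, and tending to $1$, which transfers every identity to the regularized functions — on which $\chi_\pm$ may be multiplied freely — followed by a passage to the limit. The same origin singularity is exactly the reason the test functions $\psi$ in the direct part must vanish near $0$.
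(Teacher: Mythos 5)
You should first note that the paper itself offers no proof of this proposition: it is quoted from \cite{Dang-Mai-Qian} as background material, so there is no in-paper argument to compare yours against; what follows is an assessment of your proposal on its own terms. Your reading of the statement is the right one --- in the converse the printed hypothesis ``for all $\psi\in\Psi(\mathbf R^m,\mathbf C^{(m)})$'' must mean ``for all $\psi\in\Psi^-(\mathbf R^m,\mathbf C^{(m)})$'', since testing against all of $\Psi$ would force $f=0$ --- and your overall architecture (lift into the half-space and use $\chi_-\chi_+=0$ for the direct part; Poisson extension plus ``$\chi_-\hat f$ vanishes away from the origin'' for the converse) is exactly the standard route taken in the cited reference.

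The direct part is essentially complete, modulo the usual care with the order of Clifford factors ($\chi_+$ enters $\mathcal F(f(x_0+\cdot))$ as a left factor while $\chi_-$ is a right factor of $\psi$, so they meet in the middle of $\int\psi\chi_-\chi_+g\,d\ulxi$ and annihilate) and the remark that for $p=\infty$ the formula $f(x_0+\cdot)=E^+_{x_0-\varepsilon}*f(\varepsilon+\cdot)$ must be replaced by a Poisson-type representation, since the Cauchy kernel decays only like $|\ulx|^{-m}$ and is not integrable against bounded functions. The genuine gap is in the converse for $p>2$. Verifying $DF(x)=0$ at a fixed point amounts to pairing $\hat f$ against $\psi_x(\ulxi)=c\,e^{i\langle \ulx,\ulxi\rangle}(-2|\ulxi|)e^{-x_0|\ulxi|}\chi_-(\ulxi)$, which is of the form $(\text{scalar})\cdot\chi_-$ but is neither smooth at the origin nor vanishing in a neighborhood of it, hence is not in $\Psi^-$; one must truncate near the origin, apply the hypothesis to the truncation, and control the error, and for $p>2$ that error is a pairing of a distribution whose behaviour at $\ulxi=0$ is completely unconstrained by the hypothesis with a function that merely vanishes to first order there --- your own $\delta_0$/constants example shows the estimate is not free. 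The mollification $f\mapsto f*\varphi_n$ with $\widehat{\varphi_n}$ vanishing near the origin is the correct device (it is essentially what \cite{Dang-Mai-Qian} does), but as written it is a plan rather than a proof: one still has to show the regularized functions are boundary values of $H^p$ functions, that they converge to $f$ in a topology strong enough to preserve membership in $H^p$ (weak-$*$ when $p=\infty$), and that the limit has $f$ as its nontangential boundary value. So: right approach, direct part sound, converse correctly diagnosed but not closed for $p>2$.
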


In this paper we will mainly concern the monogenic Hardy and Bergman spaces on strips.
Denote by $H^p(S_a, \mathbf C^{(m)}),1\leq p<\infty, a>0,$ the monogenic Hardy space on the strip $S_a=\{x\in \mathbf R^{m+1}: |x_0|<a,\ulx\in \mathbf R^m\},$ where
\begin{align*}
H^p(S_a,\mathbf C^{(m)})=\left\{f \text{ is left-monogenic in }S_a: ||f||_{H^p(S_a, \mathbf C^{(m)})}^p=\sup_{|x_0|<a}\int_{\mathbf R^m}|f(x_0+\ulx)|^p d\ulx<\infty \right\}.
\end{align*}
Similarly, we denote by $H^p(\mathbf R^{m+1}_{+,-a},\mathbf C^{(m)})$ the Hardy space consisting of the left-monogenic functions in $\mathbf R^{m+1}_{+,-a}=\{x\in \mathbf R^{m+1}:x_0>-a,\ulx\in \mathbf R^m\}$ satisfying
$$
||f||_{H^p(\mathbf R^{m+1}_{+,-a}, \mathbf C^{(m)})}^p=\sup_{x_0>-a}\int_{\mathbf R^m}|f(x_0+\ulx)|^p d\ulx<\infty,
$$
and by $H^p(\mathbf R^{m+1}_{-,a}, \mathbf C^{(m)})$ the Hardy space consisting of the left-monogenic functions in $\mathbf R^{m+1}_{-,a}=\{x\in \mathbf R^{m+1}:x_0<a,\ulx\in \mathbf R^m\}$ satisfying
$$
||f||_{H^p(\mathbf R^{m+1}_{-,a}, \mathbf C^{(m)})}^p=\sup_{x_0<a}\int_{\mathbf R^m}|f(x_0+\ulx)|^p d\ulx<\infty.
$$

Let $\sigma_m=\frac{\pi^\frac{m+1}{2}}{\Gamma(\frac{m+1}{2})}.$ For $f\in H^p(\mathbf R^{m+1}_{+}, \mathbf C^{(m)}),$ one has the Cauchy integral formula, i.e.,
\begin{prop}[see e.g. \cite{Gilbert-Murray,Mitrea}]\label{Cauchy-formula}
	For $f\in H^p(\mathbf R^{m+1}_+, \mathbf C^{(m)}),1\leq p<\infty,$ we have
	\begin{align*}
	f(x)=\int_{\mathbf R^m}E(x-\uly)f(\uly) d\uly,
	\end{align*}
	where $E(x)=\frac{1}{2\sigma_m}\frac{\overline x}{|x|^{m+1}}$ is the Cauchy kernel, and $f(\uly)$ is the NTBL function of $f$.
\end{prop}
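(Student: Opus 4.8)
The plan is to obtain the formula from the classical Clifford--Cauchy integral formula for functions monogenic on a bounded domain with piecewise smooth boundary (see \cite{Brackx-Delanghe-Sommen,Gilbert-Murray}), applied to a truncation of the half-space, followed by two limit passages. Fix $x=x_0+\ulx\in\mathbf R^{m+1}_+$, and for $0<\epsilon<x_0$ and $N>|x|$ put $G_{\epsilon,N}=\{y=y_0+\uly\in\mathbf R^{m+1}:y_0>\epsilon\}\cap B(0,N)$. As $f$ is left-monogenic on a neighbourhood of $\overline{G_{\epsilon,N}}\subset\mathbf R^{m+1}_+$ and $E$ is para-vector valued and left-monogenic off the origin, hence also right-monogenic, the Cauchy formula gives
\[
f(x)=\int_{\partial G_{\epsilon,N}}E(y-x)\,\nu(y)\,f(y)\,dS(y),
\]
with $\nu$ the outward unit normal and $dS$ the surface measure. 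The boundary consists of the flat face $\{y_0=\epsilon\}\cap B(0,N)$, on which $\nu\equiv-\bfe_0=-1$ and $dS=d\uly$, and the spherical cap $\Gamma=\partial B(0,N)\cap\{y_0>\epsilon\}$, on which $|E(y-x)|=\frac{1}{2\sigma_m}|y-x|^{-m}\le C(N-|x|)^{-m}$.

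First I would let $N\to\infty$. Since every component of $f$ is harmonic and $p\ge1$, the function $|f|^p$ is subharmonic on $\mathbf R^{m+1}_+$; the sub-mean-value inequality over balls $B(y,y_0/2)\subset\mathbf R^{m+1}_+$ together with the finiteness of $\|f\|_{H^p(\mathbf R^{m+1}_+,\mathbf C^{(m)})}$ yields $|f(y)|\le C\,y_0^{-m/p}\|f\|_{H^p}$, and more generally $f(y)\to0$ as $|y|\to\infty$ within $\{y_0\ge\epsilon\}$. As the surface area of $\Gamma$ is $\le CN^m$, the cap contribution tends to $0$; the flat integral extends to one over all of $\mathbf R^m$ by Hölder's inequality, because $\uly\mapsto E(\epsilon+\uly-x)$ lies in $L^q(\mathbf R^m)$ for the conjugate exponent $q$ (with $q=\infty$ when $p=1$) and $\|f(\epsilon+\cdot)\|_{L^p}\le\|f\|_{H^p}$. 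This leaves
\[
f(x)=-\int_{\mathbf R^m}E(\epsilon+\uly-x)\,f(\epsilon+\uly)\,d\uly .
\]

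Next I would let $\epsilon\to0^+$. By the $H^p$ theory $f(\epsilon+\cdot)$ converges in $L^p(\mathbf R^m,\mathbf C^{(m)})$ to the NTBL function $f$; for $\epsilon<x_0/2$ one has the $\epsilon$-independent bound $|E(\epsilon+\uly-x)|\le\frac{1}{2\sigma_m}\big((x_0/2)^2+|\uly-\ulx|^2\big)^{-m/2}\in L^q(\mathbf R^m)$, so Hölder's inequality again permits taking the limit inside, whence $f(x)=-\int_{\mathbf R^m}E(\uly-x)f(\uly)\,d\uly$. Finally, $\uly-x=-x_0+(\uly-\ulx)$ is a para-vector, so $\overline{\uly-x}=-x_0-(\uly-\ulx)=-\overline{x-\uly}$ and $|\uly-x|=|x-\uly|$; hence $E(\uly-x)=-E(x-\uly)$ and $f(x)=\int_{\mathbf R^m}E(x-\uly)f(\uly)\,d\uly$, as claimed.

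The step I expect to be the main obstacle is making the spherical cap vanish, i.e.\ establishing the decay of $H^p$ functions at infinity inside the half-space (the rate $y_0^{-m/p}$ as $y_0\to\infty$ together with uniform smallness in $|\uly|$ on each slab $\{y_0\ge\epsilon\}$); this rests on the subharmonicity of $|f|^p$ and the finiteness of the $H^p$ norm and requires some care, and the kernel $\uly\mapsto E(\epsilon+\uly-x)$ is only barely in $L^q$ when $p$ is close to $1$. Once this and the existence of NTBLs are granted, the remaining estimates are routine. For $p=2$ a shorter route is available: by Proposition \ref{PW-hardy}, $f(x)=\frac{1}{(2\pi)^m}\int_{\mathbf R^m}e^+(x,\ulxi)\hat f(\ulxi)\,d\ulxi$, and a Fourier-transform computation showing $\frac{1}{(2\pi)^m}\int_{\mathbf R^m}e^+(x,\ulxi)e^{-i\langle\uly,\ulxi\rangle}\,d\ulxi=E(x-\uly)$ for $x_0>0$ (match the scalar part with the Poisson kernel and the non-scalar part with the conjugate-Poisson kernels) then gives the identity via the Plancherel relation \eqref{Plancherel}.
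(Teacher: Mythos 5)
The paper does not prove Proposition \ref{Cauchy-formula}; it is quoted as a known result from \cite{Gilbert-Murray,Mitrea}, so there is no internal proof to compare against. Your argument is the standard textbook derivation and is essentially sound: the Clifford--Cauchy formula on $\{y_0>\epsilon\}\cap B(0,N)$, the normal $-\bfe_0$ on the flat face, the identity $E(\uly-x)=-E(x-\uly)$, and the two Hölder-based limit passages are all correct (note that $E(\epsilon+\cdot-x)$ is comfortably, not ``barely,'' in $L^q(\mathbf R^m)$ for every $q>1$, since it decays like $|\uly|^{-m}$). The one step you rightly flag as delicate --- the vanishing of the spherical cap --- genuinely needs more than the bound $|f(y)|\le Cy_0^{-m/p}\|f\|_{H^p}$, which is useless near the equator $y_0\approx\epsilon$ of the cap: you should split the cap into $\{y_0\ge N^{1/2}\}$, where that bound gives decay, and $\{\epsilon\le y_0<N^{1/2}\}$, where $|\uly|\ge\sqrt{N^2-N}\to\infty$ and the sub-mean-value inequality over $B(y,y_0/2)$ bounds $|f(y)|^p$ by $\epsilon^{-m}$ times the tail $\sup_t\int_{|\ulw|\ge R_N}|f(t+\ulw)|^p d\ulw$ restricted to the relevant slab (or, more cleanly, use the harmonic majorant $|f|^p\le P_{y_0}*u$ with $u\in L^1$). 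Also be aware that for $p=1$ the $L^p$-convergence of $f(\epsilon+\cdot)$ to the NTBL function is itself a nontrivial piece of the monogenic $H^1$ theory, so invoking it keeps your proof at the level of a reduction to the cited boundary-value results rather than a self-contained argument. Your alternative route for $p=2$ via Proposition \ref{PW-hardy} and the identity $\frac{1}{(2\pi)^m}\int_{\mathbf R^m}e^+(x,\ulxi)e^{-i\langle\uly,\ulxi\rangle}d\ulxi=E(x-\uly)$ is correct and is closer in spirit to the Fourier-spectrum methods the paper actually uses.
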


\noindent Denote by $A^p(S_a, \mathbf C^{(m)}),1\leq p<\infty,$ the Bergman spaces on $S_a,$ where
\begin{align}\label{Bergman-norm}
A^p(S_a, \mathbf C^{(m)})=\{f \text{ is left-monogenic in }S_a: ||f||_{A^p(S_a, \mathbf C^{(m)})}^p=\int_{-a}^{a}\int_{\mathbf R^m}|f(x_0+\ulx)|^p d\ulx dx_0<\infty\}.
\end{align}
Similarly, we denote by $A^p(\mathbf R^{m+1}_{+,-a}, \mathbf C^{(m)})$ the Bergman spaces consisting of the left-monogenic functions in $\mathbf R^{m+1}_{+,-a}$ satisfying
$$
||f||_{A^p(\mathbf R^{m+1}_{+,-a}, \mathbf C^{(m)})}^p=\int_{-a}^\infty\int_{\mathbf R^m}|f(x_0+\ulx)|^p d\ulx dx_0<\infty,
$$
and by $A^p(\mathbf R^{m+1}_{-,a}, \mathbf C^{(m)})$ the Bergman spaces consisting of the left-monogenic functions in $\mathbf R^{m+1}_{-,a}$ satisfying
$$
||f||_{A^p(\mathbf R^{m+1}_{-,a}, \mathbf C^{(m)})}^p=\int^{a}_{-\infty}\int_{\mathbf R^m}|f(x_0+\ulx)|^p d\ulx dx_0<\infty.
$$

\section{Monogenic RKHSs and estimations of their reproducing kernels}
\subsection{$PW(\pi, \mathbf C^{(m)})$ as a RKHS}
It is noted that $PW(\frac{\pi}{h}, \mathbf C^{(m)})$ is a RKHS admitting the reproducing kernel given by
$$
P(w,\overline x)=\frac{1}{(2\pi)^m}\int_{\mathbf R^m}e(w+\overline x,\ulxi)\chi_{B(0,\frac{\pi}{h})}(\ulxi)d\ulxi.
$$
In fact, by the Plancherel theorem, Proposition \ref{PW} implies that, for $f\in PW(\pi, \mathbf C^{(m)}),$
\begin{align}\label{verify}
\langle f, P(\cdot,\overline x)\rangle_{PW} = \int_{\mathbf R^m}P(\uly, x) f(\uly)d\uly = \frac{1}{(2\pi)^m}\int_{\mathbf R^m} e(x,\ulxi)\chi_{B(0,\pi)}\mathcal F(f)(\ulxi) d\ulxi =f(x),
\end{align}
which shows that $P(w,\overline x)$ is the reproducing kernel for $PW(\pi,\mathbf C^{(m)}).$

Next we induce another ${\rm sinc}$ function in $PW(\pi,\mathbf C^{(m)})$ by
\begin{align}\label{sinc-B}
{\rm sinc}_{B}(x)=P(x,0)=\frac{1}{(2\pi)^m}\int_{\mathbf R^m}e(x,\ulxi)\chi_{B(0,\pi)}(\ulxi)d\ulxi,\quad x\in \mathbf R^{m+1}.
\end{align}
 The following estimation of ${\rm sinc}_B$ is analogous to that of ${\rm sinc}_C$ given in Lemma \ref{key-lemma}. Moreover, the sinc function ${\rm sinc}_B$ has more significance due to its relation with the reproducing kernel of $PW(\pi,\mathbf C^{(m)})$ through (\ref{verify}) and (\ref{sinc-B}).
\begin{lem}\label{PW-infty}
$$
|{\rm sinc}_B(x)|\leq M\frac{(1+|x_0|)e^{|x_0|\pi}}{|\ulx|^\frac{m+1}{2}}, \quad x\in \mathbf R^{m+1}, \ |\ulx|\geq 1,
$$
where $M$ is a constant.
\end{lem}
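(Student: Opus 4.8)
The plan is to estimate the oscillatory integral
$$
{\rm sinc}_B(x)=\frac{1}{(2\pi)^m}\int_{B(0,\pi)}e^{i\langle\ulx,\ulxi\rangle}\bigl(e^{-x_0|\ulxi|}\chi_+(\ulxi)+e^{x_0|\ulxi|}\chi_-(\ulxi)\bigr)\,d\ulxi
$$
by exploiting decay in $\ulx$ coming from integration by parts, together with the fact that the $\ulxi$-domain is the \emph{bounded} ball $B(0,\pi)$ rather than a cube. First I would reduce to a scalar model: since $|\chi_\pm(\ulxi)|$ is bounded and $|e^{\mp x_0|\ulxi|}|\le e^{|x_0|\pi}$ on $B(0,\pi)$, it suffices to bound, up to the constant factor $e^{|x_0|\pi}$ and a polynomial in $|x_0|$, an integral of the form $\int_{B(0,\pi)}e^{i\langle\ulx,\ulxi\rangle}a(\ulxi,x_0)\,d\ulxi$ where $a$ is smooth on the open ball, with $a$ and its first derivatives in $\ulxi$ controlled by $(1+|x_0|)$ (the extra $|x_0|$ appears when $\partial_{\ulxi}$ hits $e^{\mp x_0|\ulxi|}$; the function $|\ulxi|$ is smooth away from the origin, and near the origin one splits off a small ball where the amplitude is harmless). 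The target power $|\ulx|^{-(m+1)/2}$ is exactly half of what a single integration by parts would give ($|\ulx|^{-m}$ would be naive, $|\ulx|^{-1}$ per step), so the strategy must be the standard stationary-phase / non-stationary-phase dichotomy associated with the curved boundary $\partial B(0,\pi)$: the bulk contributes arbitrarily fast decay, while the boundary sphere contributes $|\ulx|^{-(m-1)/2}$ from curvature — and I expect the claimed $|\ulx|^{-(m+1)/2}$ to come from one further gain, either an extra power from the amplitude vanishing appropriately or from writing $\chi_{B(0,\pi)}$ as a difference and integrating by parts once in the radial variable.

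Concretely, I would pass to polar coordinates $\ulxi=r\ulzeta$, $r\in(0,\pi)$, $\ulzeta\in\mathbf S^{m-1}$, so that
$$
{\rm sinc}_B(x)=\frac{1}{(2\pi)^m}\int_0^\pi r^{m-1}\Bigl(\int_{\mathbf S^{m-1}}e^{ir\langle\ulx,\ulzeta\rangle}\bigl(e^{-x_0 r}\chi_+(\ulzeta)+e^{x_0 r}\chi_-(\ulzeta)\bigr)\,d\ulzeta\Bigr)\,dr.
$$
The inner spherical integral is a classical object: for $|\ulx|$ large it decays like $|\ulx|^{-(m-1)/2}$ with the two critical directions $\pm\ulx/|\ulx|$ carrying the main terms, and this is where Lemma \ref{key-lemma}'s method (or the asymptotics of Bessel functions, since $\int_{\mathbf S^{m-1}}e^{ir\langle\ulx,\ulzeta\rangle}\,d\ulzeta$ is a multiple of $(r|\ulx|)^{-(m/2-1)}J_{m/2-1}(r|\ulx|)$) can be invoked. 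Substituting the $|\ulx|^{-(m-1)/2}$ bound and integrating the residual oscillation $e^{\pm ir|\ulx|}$ against the smooth, compactly-on-$(0,\pi)$-supported weight $r^{m-1}e^{\mp x_0 r}$ — integrating by parts once in $r$ and picking up the boundary term at $r=\pi$ (the term at $r=0$ being controlled separately by the small-ball estimate, which contributes $O(1)$) — yields the extra factor $|\ulx|^{-1}$, producing the combined bound $(1+|x_0|)e^{|x_0|\pi}|\ulx|^{-(m+1)/2}$ after absorbing polynomial factors of $|x_0|$ into the constant (or, as stated, into the linear factor $1+|x_0|$). All constants depending only on $m$ are collected into $M$.

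The main obstacle is handling the singularity of $\chi_\pm(\ulxi)$ and of $|\ulxi|$ at the origin cleanly while still gaining the full $|\ulx|^{-(m+1)/2}$ decay: near $\ulxi=0$ the amplitude is not smooth, so the integration-by-parts argument cannot be applied there directly. The remedy I would use is to fix a cutoff $\phi$ supported in $B(0,1/2)$ and equal to $1$ near $0$, bound $\int\phi(\ulxi)(\cdots)\,d\ulxi$ trivially by a constant (its contribution is $O(1)\le M|\ulx|^{-(m+1)/2}$ once $|\ulx|\ge 1$ — but this is only true if $(m+1)/2\le 0$, which fails, so in fact one must still integrate by parts even in the inner region, using that $e^{i\langle\ulx,\ulxi\rangle}=|\ulx|^{-2}(-\Delta_{\ulxi})e^{i\langle\ulx,\ulxi\rangle}$ and that $\phi(\ulxi)\chi_\pm(\ulxi)e^{\mp x_0|\ulxi|}$, while singular at $0$, lies in $L^1$ together with enough derivatives after multiplying by a power of $|\ulxi|$). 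The cleanest route is likely to reuse Lemma \ref{key-lemma} itself: since $\chi_{B(0,\pi)}\le \chi_{[-\pi,\pi]^m}$ pointwise is false as an inequality of functions but $B(0,\pi)\subset[-\pi,\pi]^m$, one can write ${\rm sinc}_B={\rm sinc}_C-(\text{integral over }[-\pi,\pi]^m\setminus B(0,\pi))$ and estimate the second piece — whose domain stays away from the origin — by elementary repeated integration by parts, while ${\rm sinc}_C$ is already controlled by Lemma \ref{key-lemma}; reconciling the anisotropic bound $\prod_j(1+|x_j|)^{-1}$ there with the isotropic $|\ulx|^{-(m+1)/2}$ claimed here is then the remaining bookkeeping, and is where I would spend the most care.
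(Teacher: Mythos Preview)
Your main line --- polar coordinates, the spherical integral producing Bessel functions, and one integration by parts in $r$ to gain the extra $|\ulx|^{-1}$ --- is exactly the paper's route. Two points in your write-up are off, though.

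The ``main obstacle'' you raise is not one. In polar coordinates $\ulxi=r\ulzeta$ the factor $\chi_\pm(\ulxi)=\tfrac12(1\pm i\ulzeta)$ depends only on $\ulzeta\in S^{m-1}$, and $e^{\mp x_0|\ulxi|}=e^{\mp x_0 r}$ is smooth in $r$, so the amplitude is smooth on $[0,\pi]\times S^{m-1}$ and no cutoff near the origin is needed. The alternative route through ${\rm sinc}_C$ you end with should be discarded: the anisotropic bound $\prod_j(1+|x_j|)^{-1}$ of Lemma~\ref{key-lemma} does \emph{not} imply $|\ulx|^{-(m+1)/2}$ (along a coordinate axis it gives only $|\ulx|^{-1}$), so that comparison cannot close, and the region $[-\pi,\pi]^m\setminus B(0,\pi)$ has corners, so ``elementary repeated integration by parts'' there will not produce $|\ulx|^{-(m+1)/2}$ either.

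The step ``substitute the $|\ulx|^{-(m-1)/2}$ bound and then integrate the residual oscillation $e^{\pm ir|\ulx|}$ by parts in $r$'' is the real gap: you cannot integrate an upper bound by parts. The paper avoids this by using the exact Bessel recursion $\frac{d}{dr}\bigl(r^k J_k(|\ulx|r)\bigr)=|\ulx|\,r^k J_{k-1}(|\ulx|r)$ to rewrite the $r$-integrand as a derivative, integrate by parts once (the boundary contribution at $r=0$ vanishes because $J_k(t)\sim t^k$ there; the contribution at $r=\pi$ is controlled by $J_k(t)=O(t^{-1/2})$), and only \emph{afterwards} apply $J_k(t)=O(t^{-1/2})$ to the remaining integral. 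The scalar part of $\chi_\pm$ yields $J_{(m-2)/2}$ directly; for the vector part $i\ulzeta$ the paper rotates $U\ulx=|\ulx|\bfe_1$, reducing the sphere integral to two scalar ones with Bessel orders $m/2$ and $(m-1)/2$, handled by the same integrate-by-parts-then-bound scheme. The factor $(1+|x_0|)$ appears exactly when the integration by parts differentiates $e^{\mp x_0 r}$.
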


\begin{proof}
Observing that
\begin{align*}
{\rm sinc}_{B}(x)&=\frac{1}{(2\pi)^m}\int_{\mathbf R^m}e(x,\ulxi)\chi_{B(0,\pi)}(\ulxi)d\ulxi\\
&=\frac{1}{(2\pi)^m}\int_{\mathbf R^m}e^+(x,\ulxi)\chi_{B(0,\pi)}(\ulxi)d\ulxi+\frac{1}{(2\pi)^m}\int_{\mathbf R^m}e^-(x,\ulxi)\chi_{B(0,\pi)}(\ulxi)d\ulxi\\
&={\rm sinc}_B^+(x)+{\rm sinc}_B^-(x).
\end{align*}
We are thus reduced to estimate ${\rm sinc}_B^+(x)$ and ${\rm sinc}_B^-(x)$ separately.

Let $d\sigma(\ulxi^\prime)$ be the area element of the $(m-1)$-sphere $S^{m-1}$.
For $x=x_0+\ulx\in \mathbf R^{m+1},$ we have
\begin{align}\label{tem_estimate}
\begin{split}
& {\rm sinc}^{\pm}_B(x)\\
&=\frac{1}{(2\pi)^m}\int_{B(0,\pi)}e^{i\langle \ulx,\ulxi\rangle}e^{\mp x_0|\ulxi|}\chi_\pm (\ulxi)d\ulxi\\
&=\frac{1}{2(2\pi)^m}\int_0^\pi\int_{S^{m-1}} e^{ir\langle \ulx,\ulxi^\prime\rangle}e^{\mp x_0r}(1\pm i\ulxi^\prime)r^{m-1}d\sigma(\ulxi^\prime)dr\\
&=\frac{1}{2(2\pi)^m}\left(\int_0^\pi\int_{S^{m-1}} e^{ir\langle \ulx,\ulxi^\prime\rangle}e^{\mp x_0r}r^{m-1}d\sigma(\ulxi^\prime)dr \pm \int_0^\pi\int_{S^{m-1}} e^{ir\langle \ulx,\ulxi^\prime\rangle}e^{\mp x_0r}(i\ulxi^\prime)r^{m-1}d\sigma(\ulxi^\prime)dr\right)\\
&=\frac{1}{2(2\pi)^m}(I_1 \pm I_2).
\end{split}
\end{align}
First consider $I_1.$ We have
\begin{align}\label{I1}
\begin{split}
I_1 &=\int_{0}^\pi e^{\mp x_0r}r^{m-1}\int_{S^{m-1}} e^{ir\langle \ulx,\ulxi^\prime\rangle}d\sigma(\ulxi^\prime)dr\\
&=\int_{0}^\pi e^{\mp x_0r}r^{m-1}\int_{S^{m-1}} e^{ir\langle U\ulx,U\ulxi^\prime\rangle}d\sigma(\ulxi^\prime)dr\\
&=\int_{0}^\pi e^{\mp x_0r}r^{m-1}\int_{0}^\pi e^{ir|\ulx|\cos\theta}(\sin\theta)^{m-2} d\theta dr\\
&=\int_{0}^\pi e^{\mp x_0r}r^{m-1}\int_{-1}^1 e^{ir|\ulx|\eta}(1-\eta^2)^\frac{m-3}{2} d\eta dr,
\end{split}
\end{align}
where $U\in \mathbf O(m)=\{A\in \mathbf{GL}(m); \langle A\ulx,A\ulxi\rangle=\langle \ulx,\ulxi \rangle,\ulx,\ulxi\in \mathbf R^m\}$ is a rotation fixing the origin and making $U\ulx=|\ulx|e_1.$  In the change of variable we used $d\sigma(\ulxi^\prime)=d\sigma(U\ulxi^\prime).$
We recall that
\begin{align*}
\int_{-1}^1 e^{ir|\ulx|\eta}(1-\eta^2)^\frac{m-3}{2}d\eta &=\omega_\frac{m-2}{2} (r|\ulx|)^{-\frac{m-2}{2}}J_{\frac{m-2}{2}}(r|\ulx|),
\end{align*}
where $\omega_\frac{m-2}{2}=\Gamma(\frac{m-1}{2})\Gamma(\frac{1}{2}),$ and $J_k(t)$ is the Bessel function given by $$J_k(t)=\frac{(\frac{t}{2})^k}{\omega_k}\int_{-1}^1 e^{its}(1-s^2)^{\frac{2k-1}{2}}ds,\quad k>-\frac{1}{2}.$$
We also need the following properties of $J_k(t)$ (see \cite{SW}):
\begin{align}\label{property-1}
\frac{d}{dt}(t^k J_k(\alpha t))=\alpha t^k J_{k-1}(\alpha t)
\end{align}
and
\begin{align}\label{property-2}
J_k(t)=O(t^{-\frac{1}{2}}) \quad {\text{as } } \quad t\to\infty.
\end{align}
Since
\begin{align*}
I_1 &=\omega_\frac{m-2}{2}\int_0^\pi e^{\mp x_0r}r^{m-1}(r|\ulx|)^{-\frac{m-2}{2}} J_{\frac{m-2}{2}}(r|\ulx|) dr\\
&=\frac{\omega_\frac{m-2}{2}}{|\ulx|^{\frac{m}{2}}}\int_0^\pi e^{\mp x_0r}r^{\frac{m}{2}}|\ulx|J_{\frac{m}{2}-1}(r|\ulx|)dr\\
&= \frac{\omega_\frac{m-2}{2}}{|\ulx|^{\frac{m}{2}}}\left(r^{\frac{m}{2}}J_{\frac{m}{2}}(r|\ulx|)e^{\mp x_0 r}\big|_{0}^\pi-{\mp x_0}\int_0^\pi e^{\mp x_0r}r^{\frac{m}{2}}J_{\frac{m}{2}}(r|\ulx|)dr\right),
\end{align*}
by (\ref{property-2}), there exists a constant $C_1>0$ such that
\begin{align*}
|I_1|\leq C_1\frac{(1+|x_0|)e^{|x_0|\pi}}{|\ulx|^\frac{m+1}{2}}.
\end{align*}

Next we consider $I_2.$ As in (\ref{I1}), we have
\begin{align}\label{I2}
\begin{split}
I_2 &=i\int_0^\pi e^{\mp x_0 r}r^{m-1}\int_{S^{m-1}}e^{ir\langle U\ulx,U\ulxi^\prime\rangle} \ulxi^\prime d\sigma(\ulxi^\prime) dr\\
&=i\int_0^\pi e^{\mp x_0 r}r^{m-1}\int_{S^{m-1}}e^{ir\langle U\ulx,\ulzeta^\prime\rangle} U\ulzeta^\prime d\sigma(\ulzeta^\prime) dr\\
&=i\int_0^\pi e^{\mp x_0 r}r^{m-1}\int_{0}^{2\pi}\int_0^\pi\cdots\int_{0}^\pi e^{ir|\ulx|\cos\theta_1} U\ulzeta^\prime (\sin\theta_1)^{m-2}\cdots \sin\theta_{m-2}d\theta_1\cdots d\theta_{m-2}d\theta_{m-1} dr,
\end{split}
\end{align}
where we write $d\sigma(\zeta^\prime)$ in spherical coordinates, and $U\in \mathbf O(m)$ fixes the origin such that $U\ulx=|\ulx|e_1.$ For $\ulzeta^\prime\in S^{m-1}$ and $U=(u_{jk})_{m\times m}\in \mathbf O(m)$, we have
$$
U\ulzeta^\prime=(u_{11}\cos\theta_1+V_1(\theta_2,...,\theta_{m-1})\sin\theta_1,\dots,u_{m1}\cos\theta_1+V_m(\theta_2,...,\theta_{m-1})\sin\theta_1)^T,
$$
where $V_j(\theta_2,...,\theta_{m-1})$ depends on $(\theta_2,...,\theta_m),$ and $|V_j(\theta_2,...,\theta_{m-1})|\leq \sum_{k=2}^m|u_{jk}|\leq m.$ Therefore, to estimate $I_2,$ it suffices to estimate $I_{21}$ and $I_{22}$, where
$$
I_{21}=\int_0^\pi e^{ir|\ulx|\cos\theta_1} \cos\theta_1 (\sin \theta_1)^{m-2}d\theta_1
$$
and
$$
I_{22}=\int_0^\pi e^{ir|\ulx|\cos\theta_1} \sin\theta_1 (\sin \theta_1)^{m-2}d\theta_1.
$$
Similarly, we have that
\begin{align*}
I_{21}=\int_{-1}^1 e^{ir|\ulx|\eta}\eta (1-\eta^2)^\frac{m-3}{2} d\eta &=-\frac{1}{m-1}(1-\eta^2)^{\frac{m-1}{2}}e^{ir|\ulx|\eta}\big|_{-1}^1+\frac{ir|\ulx|}{m-1}\int_{-1}^1e^{ir|\ulx|\eta}(1-\eta^2)^\frac{m-1}{2}d\eta\\
&=\frac{i\omega_\frac{m}{2}}{m-1} (r|\ulx|)^{-\frac{m}{2}+1}J_{\frac{m}{2}}(r|\ulx|),
\end{align*}
and
\begin{align*}
I_{22}=\int_{-1}^1 e^{ir|\ulx|\eta}(1-\eta^2)^\frac{m-2}{2}d\eta=\omega_\frac{m-1}{2}(r|\ulx|)^{-\frac{m-1}{2}}J_{\frac{m-1}{2}}(r|\ulx|).
\end{align*}
For $I_{21},$ we have that
\begin{align*}
&\int_{0}^\pi e^{\mp x_0 r}r^{m-1} (r|\ulx|)^{-\frac{m}{2}+1}J_{\frac{m}{2}}(r|\ulx|)dr \\
&=\frac{1}{|\ulx|^{\frac{m}{2}}}\int_0^\pi e^{\mp x_0r} r^{\frac{m}{2}}|\ulx| J_{\frac{m}{2}}(r|\ulx|)dr\\
&=\frac{1}{|\ulx|^\frac{m}{2}}\left(r^{\frac{m}{2}+1}J_{\frac{m}{2}+1}(r|\ulx|)\frac{e^{\mp x_0r}}{r}\big|_0^\pi - \int_0^\pi \frac{d(\frac{e^{\mp x_0r}}{r})}{dr}r^{\frac{m}{2}+1}J_{\frac{m}{2}+1}(r|\ulx|)dr\right)\\
&=\frac{1}{|\ulx|^\frac{m}{2}}\left(\pi^{\frac{m}{2}}J_{\frac{m}{2}+1}(\pi|\ulx|){e^{\mp x_0\pi}} - \int_0^\pi (\mp x_0e^{\mp x_0r}r-e^{\mp x_0r})r^{\frac{m}{2}-1}J_{\frac{m}{2}+1}(r|\ulx|)dr\right),
\end{align*}
and then have
\begin{align*}
&\left|\int_{0}^\pi e^{\mp x_0 r}r^{m-1} (r|\ulx|)^{-\frac{m}{2}+1}J_{\frac{m}{2}}(r|\ulx|)dr \right|\\
&\leq \frac{C_2^\prime}{|\ulx|^{\frac{m}{2}+1}} +\frac{C_2^{\prime\prime}}{|\ulx|^{\frac{m}{2}+1}}\int_0^\pi (|x_0|e^{\mp x_0r}r^{\frac{m-1}{2}}+e^{\mp x_0r}r^{\frac{m-3}{2}})dr.
\end{align*}
Note that
\begin{align*}
\int_0^\pi e^{\mp x_0r}r^\frac{m-3}{2}dr<\infty, \quad m\geq 3,
\end{align*}
and for $m=2,$ the same conclusion can be given by integration by parts, i.e.,
\begin{align*}
\int_0^\pi e^{\mp x_0r}r^{-\frac{1}{2}}dr=2{r^\frac{1}{2}}e^{\mp x_0r}\big|_0^\pi-(\mp 2x_0)\int_0^\pi e^{\mp x_0 r}r^\frac{1}{2}dr <\infty.
\end{align*}
Thus
\begin{align*}
&\left|\int_{0}^\pi e^{\mp x_0 r}r^{m-1} (r|\ulx|)^{-\frac{m}{2}+1}J_{\frac{m}{2}}(r|\ulx|)dr \right|\leq \frac{C_2(1+|x_0|)e^{|x_0|\pi}}{|\ulx|^{\frac{m}{2}+1}},
\end{align*}
where $C_2$ is a constant.
For $I_{22}$, we first consider the case $m=2,$ and have
\begin{align*}
\left|\int_{0}^\pi e^{\mp x_0 r}r I_{22}dr \right|=\left|\frac{\pi}{2}\int_0^\pi e^{\mp x_0r}r \frac{e^{ir|\ulx|}-e^{-ir|\ulx|}}{ir|\ulx|} dr \right|
&\leq C_2\frac{e^{|x_0|\pi}}{|\ulx|(x_0^2+|\ulx|^2)^\frac{1}{2}}\leq C_3\frac{e^{|x_0|\pi}}{|\ulx|^2}.
\end{align*}
For $m\geq 3,$ using integration by parts, we have
\begin{align*}
&\left|\int_0^\pi e^{\mp x_0 r}r^{m-1} (r|\ulx|)^{-\frac{m-1}{2}} J_{\frac{m-1}{2}}(r\ulx)dr \right| \\
&=\frac{1}{|\ulx|^\frac{m+1}{2}}\left|\int_0^\pi \frac{e^{\mp x_0 r}}{r}|\ulx|r^\frac{m+1}{2} J_{\frac{m-1}{2}}(r\ulx)dr\right|\\
&\leq \frac{1}{|\ulx|^\frac{m+1}{2}}\left|r^\frac{m+1}{2} J_{\frac{m+1}{2}}(r|\ulx|)\frac{e^{\mp x_0r}}{r}\big|_{0}^\pi \right| +\frac{1}{|\ulx|^\frac{m+1}{2}} \left|\int_{0}^\pi r^\frac{m+1}{2} J_{\frac{m+1}{2}}(r|\ulx|) \frac{\mp x_0e^{\mp x_0r}r-e^{\mp x_0r}}{r^2} dr\right|\\
&\leq C_4\frac{(1+|x_0|)e^{|x_0|\pi}}{|\ulx|^\frac{m+2}{2}}.
\end{align*}
Therefore, when $|\ulx|\geq 1,$
$$
|I_{2}|\leq C_5 \frac{(1+|x_0|)e^{|x_0|\pi}}{|\ulx|^\frac{m+1}{2}}.
$$
We thus obtain the desired result.
\end{proof}

\subsection{$H^2(S_a,\mathbf C^{(m)})$ as a RKHS}
The space $H^2(S_a, \mathbf C^{(m)})$ is also a Paley-Wiener type RKHS. In this part we will first prove the Paley-Wiener theorem for $H^2(S_a,\mathbf C^{(m)}),$ and then construct its Szeg\"o kernel.

It is well-known that (see e.g. \cite{Gilbert-Murray,Dang-Mai-Qian})
$$
L^2(\mathbf R^m, \mathbf C^{(m)})=H^2(\mathbf R^{m+1}_+, \mathbf C^{(m)})\oplus H^2(\mathbf R^{m+1}_-, \mathbf C^{(m)}).
$$
 There exists a similar decomposition for $H^2(S_a, \mathbf C^{(m)}).$ In the following we give
\begin{thm}\label{PW-strips}
Let $f\in L^2(\mathbf R^m, \mathbf C^{(m)}).$ Then $f$ is the restriction to
$\mathbf R^m$ of a function in
$H^2(S_a, \mathbf C^{(m)})$ if and only if there exists a measurable function
 $g$ in $\mathbf R^m$ such that
\begin{align}\label{cond1}
e^{a|\ulxi|}g(\ulxi)\in L^2(\mathbf R^m, \mathbf C^{(m)})
\end{align}
and
\begin{align}\label{cond2}
f(x)=\frac{1}{(2\pi)^m}\int_{\mathbf R^m}e(x,\ulxi)g(\ulxi)d\ulxi,\quad x\in S_a,
\end{align}
showing that $g(\ulxi)=\mathcal F f(\ulxi).$
Moreover, there exist $f_+\in H^2(\mathbf R^{m+1}_{+,-a}, \mathbf C^{(m)})$ and $f_-\in H^2(\mathbf R^{m+1}_{-,a}, \mathbf C^{(m)})$ such that
\begin{align}\label{Decomposition-2}
f(x)=f_+(x)+f_-(x),\quad x\in S_a,
\end{align}
where the above decomposition is unique, and implies
$$
H^2(S_a,\mathbf C^{(m)})=H^2(\mathbf R^{m+1}_{+,-a}, \mathbf C^{(m)})\oplus H^2(\mathbf R^{m+1}_{-,a}, \mathbf C^{(m)}).
$$
\end{thm}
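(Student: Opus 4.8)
The strategy is to reduce the entire statement to the half-space Paley-Wiener theorem (Proposition \ref{PW-hardy}), using two facts: that $\mathbf R^{m+1}_{+,-a}$ and $\mathbf R^{m+1}_{-,a}$ are merely $x_0$-translates of $\mathbf R^{m+1}_{+}$ and $\mathbf R^{m+1}_{-}$, and that the two ``spectral sectors'' cut out by the projectors $\chi_\pm(\ulxi)$ are mutually orthogonal at every $\ulxi$ (this follows from $\overline{\chi_\pm(\ulxi)}=\chi_\pm(\ulxi)$ together with \eqref{important}). The preliminary step is to record the spectral form of an arbitrary left-monogenic $F$ on $S_a$ whose horizontal slices $F_{x_0}(\ulx):=F(x_0+\ulx)$ are uniformly bounded in $L^2(\mathbf R^m,\mathbf C^{(m)})$: writing $G(x_0,\cdot)=\mathcal F(F_{x_0})$, the equation $DF=0$ becomes, after a Fourier transform in $\ulx$, the ordinary differential equation $\partial_{x_0}G(x_0,\ulxi)=-i\ulxi\,G(x_0,\ulxi)$; since $\ulxi\,\chi_\pm(\ulxi)=\mp i|\ulxi|\,\chi_\pm(\ulxi)$ (a one-line consequence of $\chi_\pm(\ulxi)=\tfrac12(1\pm i\ulxi/|\ulxi|)$ and \eqref{important}), this splits into two scalar equations on the sectors and integrates to
\begin{align*}
G(x_0,\ulxi)=e^{-x_0|\ulxi|}\chi_+(\ulxi)\,\mathcal F f(\ulxi)+e^{x_0|\ulxi|}\chi_-(\ulxi)\,\mathcal F f(\ulxi),\qquad f:=F|_{\mathbf R^m}.
\end{align*}

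With this in hand, necessity of \eqref{cond1} and \eqref{cond2} is short. For $F\in H^2(S_a,\mathbf C^{(m)})$, the Plancherel identity \eqref{Plancherel} and the pointwise orthogonality of the two sectors give, for every $|x_0|<a$,
\begin{align*}
\int_{\mathbf R^m}|F_{x_0}(\ulx)|^2\,d\ulx=\frac{1}{(2\pi)^m}\int_{\mathbf R^m}\bigl(e^{-2x_0|\ulxi|}|\chi_+(\ulxi)\mathcal F f(\ulxi)|^2+e^{2x_0|\ulxi|}|\chi_-(\ulxi)\mathcal F f(\ulxi)|^2\bigr)\,d\ulxi;
\end{align*}
dropping one non-negative term and letting $x_0\to a^-$, then $x_0\to -a^+$, the monotone convergence theorem yields $e^{a|\ulxi|}\mathcal F f(\ulxi)\in L^2(\mathbf R^m,\mathbf C^{(m)})$, which is \eqref{cond1}, while \eqref{cond2} with $g=\mathcal F f$ is just the inverse Fourier transform of the displayed formula for $G$, the cross terms $e^{\pm}(x,\ulxi)\chi_{\mp}(\ulxi)$ vanishing by \eqref{important}. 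Conversely, given $g$ with $e^{a|\ulxi|}g\in L^2$, on every sub-strip $\{|x_0|\le a'\}$, $a'<a$, the integrand of \eqref{cond2} and its first-order $x$-derivatives are dominated by $C_{a'}\,e^{a'|\ulxi|}|g(\ulxi)|$, which is integrable by Cauchy--Schwarz; hence $F$ defined by \eqref{cond2} is left-monogenic on $S_a$ because $e(\cdot,\ulxi)$ is, and running the slice computation backwards shows $F\in H^2(S_a,\mathbf C^{(m)})$ with $F|_{\mathbf R^m}=\mathcal F^{-1}g$, so indeed $g=\mathcal F f$.

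For the decomposition, set $g_\pm(\ulxi):=\chi_\pm(\ulxi)\mathcal F f(\ulxi)$, so that $e^{a|\ulxi|}g_\pm\in L^2(\mathbf R^m,\mathbf C^{(m)})$, and put $f_+(x)=\frac{1}{(2\pi)^m}\int_{\mathbf R^m}e^+(x,\ulxi)g_+(\ulxi)\,d\ulxi$ on $\{x_0>-a\}$ and $f_-(x)=\frac{1}{(2\pi)^m}\int_{\mathbf R^m}e^-(x,\ulxi)g_-(\ulxi)\,d\ulxi$ on $\{x_0<a\}$. Since translating the $x_0$-variable by $a$ carries $\mathbf R^{m+1}_{+,-a}$ onto $\mathbf R^{m+1}_{+}$ and replaces $e^+(\cdot,\ulxi)$ by $e^{-a|\ulxi|}e^+(\cdot,\ulxi)$, Proposition \ref{PW-hardy} applied to the translate shows $f_+\in H^2(\mathbf R^{m+1}_{+,-a},\mathbf C^{(m)})$ precisely because $e^{a|\ulxi|}g_+\in L^2$; the mirror statement for the lower half-space gives $f_-\in H^2(\mathbf R^{m+1}_{-,a},\mathbf C^{(m)})$. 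On $S_a$ one has $e^+(x,\ulxi)g_-(\ulxi)=e^-(x,\ulxi)g_+(\ulxi)=0$ by \eqref{important}, hence $f_++f_-=F$ there, which is \eqref{Decomposition-2}. For uniqueness, if $f_++f_-=\tilde f_++\tilde f_-$ on $S_a$, then $h:=f_+-\tilde f_+=\tilde f_--f_-$ patches to an entire left-monogenic function with $\sup_{x_0\in\mathbf R}\|h_{x_0}\|_{L^2}<\infty$; applying the spectral formula to $h$ forces $\chi_+\mathcal F(h_0)=0$ (let $x_0\to-\infty$) and $\chi_-\mathcal F(h_0)=0$ (let $x_0\to+\infty$), so $h\equiv0$. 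Conversely, any pair $(f_+,f_-)$ from the two half-space Hardy spaces has $f_++f_-\in H^2(S_a,\mathbf C^{(m)})$ by the triangle inequality applied slice by slice, which yields the identification $H^2(S_a,\mathbf C^{(m)})=H^2(\mathbf R^{m+1}_{+,-a},\mathbf C^{(m)})\oplus H^2(\mathbf R^{m+1}_{-,a},\mathbf C^{(m)})$.

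I expect the main obstacle to be the preliminary spectral representation: establishing rigorously that for $F\in H^2(S_a,\mathbf C^{(m)})$ the slice map $x_0\mapsto F_{x_0}$ is differentiable into $L^2(\mathbf R^m,\mathbf C^{(m)})$ and that its derivative Fourier-transforms to $-i\ulxi\,G(x_0,\cdot)$. Formally this is immediate from $DF=0$, but to make it airtight one uses the interior real-analyticity of monogenic functions together with interior estimates for $\partial_{x_0}F$ and $\partial_jF$ (which are themselves monogenic) and the uniform $L^2$-bound on slices, so as to differentiate legitimately under the Fourier transform and to justify the limits $x_0\to\pm a$. Everything after that is routine bookkeeping with the projectors $\chi_\pm$ and the elementary translation identities for $e^{\pm}$.
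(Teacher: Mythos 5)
Your argument is correct in substance and reaches the same intermediate identity as the paper --- namely $\mathcal F(f(x_0+\cdot))=(e^{-x_0|\ulxi|}\chi_+ +e^{x_0|\ulxi|}\chi_-)\mathcal F(f)$, which is \eqref{tem2} --- but by a genuinely different route at the one step where all the work is. You Fourier-transform the Dirac equation slice-wise, use $\ulxi\,\chi_\pm(\ulxi)=\mp i|\ulxi|\chi_\pm(\ulxi)$ to diagonalize, and integrate the resulting ODE in $x_0$; the cost, which you correctly flag as the main obstacle, is justifying that $x_0\mapsto F_{x_0}$ is differentiable as an $L^2(\mathbf R^m,\mathbf C^{(m)})$-valued map and that the derivative commutes with the Fourier transform. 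The paper is built precisely to avoid that issue: it convolves each slice with a band-limited approximate identity $\phi_\epsilon$ so that $\mathcal F(g_\epsilon)$ has compact support, writes down the monogenic Fourier inversion $G_\epsilon$ of that compactly supported spectrum, and invokes uniqueness of left-monogenic functions agreeing on $\mathbf R^m$ to conclude $G_\epsilon=g_\epsilon$, from which \eqref{tem2} follows on $B(0,1/\epsilon)$ and then everywhere --- no differentiation in $x_0$ is ever performed. Your sketch of the repair (interior estimates for the monogenic derivatives plus the uniform $L^2$ bound on slices, then a Fubini-type argument to integrate the ODE for a.e.\ $\ulxi$) is the standard way to make the ODE route rigorous, so this is a workable alternative rather than a gap, but as written it is the one step that is asserted rather than proved. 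The remaining pieces line up closely with the paper: the necessity of \eqref{cond1} via Plancherel, the orthogonality \eqref{important}, and a monotone-convergence passage $x_0\to\pm a$; the construction of $f_\pm$ from $\chi_\pm\mathcal F f$ and the translated Proposition \ref{PW-hardy}; and uniqueness of \eqref{Decomposition-2}. For uniqueness your Liouville-type argument (patch $h$ to an entire monogenic function with uniformly bounded $L^2$ slices and let $x_0\to\pm\infty$ in the spectral formula) is a clean variant of the paper's Lemma \ref{lem3.5}, which instead uses subharmonicity of $|f|^p$ and the monogenic Liouville theorem; yours has the small advantage of not needing pointwise decay, the paper's of not needing the spectral formula on arbitrarily wide strips.
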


To prove Theorem \ref{PW-strips}, we first recall the Paley-Wiener theorem
for $H^2(\mathbf R^{m+1}_{+,-a}, \mathbf C^{(m)})$ and $H^2(\mathbf R^{m+1}_{-,a}, \mathbf C^{(m)}).$
We can only consider the case for $H^2(\mathbf R^{m+1}_{+,-a}, \mathbf C^{(m)}),
$ as the case for $H^2(\mathbf R^{m+1}_{-,a}, \mathbf C^{(m)})$ is similar. In fact, as a consequence of the Paley-Wiener theorem for $H^2(\mathbf R^{m+1}_+, \mathbf C^{(m)})$ (see e.g. \cite{Gilbert-Murray,Mitrea}, and see also Proposition \ref{PW-hardy}), we have
\begin{lem}\label{PW-Upper-Hardy}
 $f\in H^2(\mathbf R^{m+1}_{+,-a},\mathbf C^{(m)})$ if and only if there exists a measurable function $g$ in $\mathbf R^m$ such that
 \begin{align*}
 e^{a|\ulxi|}g(\ulxi)\in L^2(\mathbf R^m, \mathbf C^{(m)})
 \end{align*}
 and
 \begin{align}\label{PW-a}
 f(x) = \frac{1}{(2\pi)^m}\int_{\mathbf R^m} e^{+}(x,\ulxi) g(\ulxi) d\ulxi, \quad x\in \mathbf R^{m+1}_{+,-a},
 \end{align}
 showing that $g(\ulxi)=\mathcal F(f)(\ulxi).$
\end{lem}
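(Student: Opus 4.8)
The final statement to prove is Lemma \ref{PW-Upper-Hardy}, characterizing $H^2(\mathbf R^{m+1}_{+,-a},\mathbf C^{(m)})$ via the Fourier transform.

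=== PROOF PROPOSAL ===

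The plan is to reduce Lemma \ref{PW-Upper-Hardy} to the already-stated Paley-Wiener theorem for the standard upper-half-space Hardy space $H^2(\mathbf R^{m+1}_+,\mathbf C^{(m)})$ (Proposition \ref{PW-hardy}) by a vertical translation. The key observation is that a function $f$ is left-monogenic on the larger half-space $\mathbf R^{m+1}_{+,-a}=\{x_0>-a\}$ with $\sup_{x_0>-a}\int_{\mathbf R^m}|f(x_0+\ulx)|^2\,d\ulx<\infty$ if and only if the translated function $F(x):=f(x-a)=f((x_0-a)+\ulx)$ is left-monogenic on $\mathbf R^{m+1}_+=\{x_0>0\}$ and belongs to $H^2(\mathbf R^{m+1}_+,\mathbf C^{(m)})$; indeed translation in the $x_0$-direction commutes with the Dirac operator $D$ and preserves the $L^2$ norm on each horizontal slice. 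So the first step is to make this translation precise and invoke Proposition \ref{PW-hardy} for $F$.

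Second, applying Proposition \ref{PW-hardy} to $F$ yields a function $G\in L^2(\mathbf R^m,\mathbf C^{(m)})$ with $G=\mathcal F(F)$ (the NTBL of $F$ as $x_0\to 0^+$) such that
\begin{align*}
F(x)=\frac{1}{(2\pi)^m}\int_{\mathbf R^m}e^+(x,\ulxi)G(\ulxi)\,d\ulxi=\frac{1}{(2\pi)^m}\int_{\mathbf R^m}e^{i\langle\ulx,\ulxi\rangle}e^{-x_0|\ulxi|}\chi_+(\ulxi)G(\ulxi)\,d\ulxi,\quad x_0>0.
\end{align*}
Translating back, $f(x)=F(x+a)$, so for $x_0>-a$,
\begin{align*}
f(x)=\frac{1}{(2\pi)^m}\int_{\mathbf R^m}e^{i\langle\ulx,\ulxi\rangle}e^{-(x_0+a)|\ulxi|}\chi_+(\ulxi)G(\ulxi)\,d\ulxi=\frac{1}{(2\pi)^m}\int_{\mathbf R^m}e^+(x,\ulxi)\bigl(e^{-a|\ulxi|}G(\ulxi)\bigr)\,d\ulxi.
\end{align*}
Setting $g(\ulxi):=e^{-a|\ulxi|}G(\ulxi)$, one gets precisely the representation (\ref{PW-a}), and the integrability condition $e^{a|\ulxi|}g(\ulxi)=G(\ulxi)\in L^2(\mathbf R^m,\mathbf C^{(m)})$. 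Since $\chi_+(\ulxi)e^{-x_0|\ulxi|}\to\chi_+(\ulxi)$ as $x_0\to 0^+$ in the appropriate sense, restricting to $x_0=0$ shows $\mathcal F(f)(\ulxi)=e^{-a|\ulxi|}G(\ulxi)=g(\ulxi)$, which gives the last assertion.

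Third, for the converse, one starts with $g$ satisfying $e^{a|\ulxi|}g(\ulxi)\in L^2$ and defines $f$ by (\ref{PW-a}); reversing the translation, $F(x):=f(x-a)$ is given by the integral $\frac{1}{(2\pi)^m}\int e^+(x,\ulxi)G(\ulxi)\,d\ulxi$ with $G=e^{a|\ulxi|}g\in L^2$, so Proposition \ref{PW-hardy} (converse direction) shows $F\in H^2(\mathbf R^{m+1}_+,\mathbf C^{(m)})$, whence $f\in H^2(\mathbf R^{m+1}_{+,-a},\mathbf C^{(m)})$. The main technical point to watch — and the only place any real work is needed — is to justify that the integral $\frac{1}{(2\pi)^m}\int_{\mathbf R^m}e^+(x,\ulxi)g(\ulxi)\,d\ulxi$ converges and defines a left-monogenic function already for $x_0>-a$ (not merely $x_0>0$): this follows because for $x_0>-a$ the factor $e^{-x_0|\ulxi|}g(\ulxi)=e^{-(x_0+a)|\ulxi|}\cdot\bigl(e^{a|\ulxi|}g(\ulxi)\bigr)$ has the decaying exponential $e^{-(x_0+a)|\ulxi|}$ with $x_0+a>0$ multiplying an $L^2$ function, so by Cauchy-Schwarz the integrand is in $L^1$, and differentiation under the integral sign together with $D_x e^+(x,\ulxi)=0$ gives left-monogenicity. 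I do not expect any genuine obstacle here; the lemma is essentially Proposition \ref{PW-hardy} conjugated by a translation, and the proof is a matter of bookkeeping the shift by $a$ and the resulting factor $e^{\mp a|\ulxi|}$.
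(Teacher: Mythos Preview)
Your proposal is correct and follows essentially the same approach as the paper: the paper's proof also sets $F(x)=f(-a+x_0+\ulx)$ for $x_0>0$, notes $F\in H^2(\mathbf R^{m+1}_+,\mathbf C^{(m)})$, and invokes Proposition \ref{PW-hardy}. Your write-up is in fact more detailed than the paper's (which leaves the bookkeeping of the shift and the converse direction implicit), but the underlying idea is identical.
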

\begin{proof}
For $f\in H^2(\mathbf R^{m+1}_{+,-a}, \mathbf C^{(m)}),$ we set $F(x)=f(-a+x_0+\ulx)$ with $x_0>0,$ and then $F(x)\in H^2(\mathbf R^{m+1}_+, \mathbf C^{(m)})$. Applying Proposition \ref{PW-hardy} to $F(x),$ we can get the desired relation.
\end{proof}

In the following we prove Theorem \ref{PW-strips}.

\begin{proof}
We first assume that (\ref{cond1}) holds. The monogenicity of $f(x)$ defined through (\ref{cond2}) follows from (\ref{PW-a}). Then, by Plancherel's theorem we have, for $x\in S_a,$
\begin{align}\label{Plancherel}
\begin{split}
\int_{\mathbf R^m}|f(x_0+\ulx)|^2d\ulx &=\frac{1}{(2\pi)^m} \int_{\mathbf R^m} \left|(e^{-x_0|\ulxi|}\chi_+(\ulxi)+e^{x_0|\ulxi|}\chi_-(\ulxi))g(\ulxi)\right|^2 d\ulxi\\
&= \frac{1}{(2\pi)^m}\int_{\mathbf R^m} \left|(e^{-x_0|\ulxi|}\chi_+(\ulxi))g(\ulxi)\right|^2 d\ulxi+  \int_{\mathbf R^m} \left|(e^{x_0|\ulxi|}\chi_-(\ulxi))g(\ulxi)\right|^2 d\ulxi\\
&<\infty,
\end{split}
\end{align}
where the second equality is a consequence of the orthogonality
(\ref{important}), and the last inequality follows from the assumption (\ref{cond1}).

\noindent Next we assume that $f\in H^2(S_a, \mathbf C^{(m)}).$
 Let $\phi\in S(\mathbf R^m)$ be a scalar-valued
 Schwarz function with $\int_{\mathbf R^m}\phi(\ulxi) d\ulxi=1,$
 where $\mathcal F(\phi)$ has compact support and is
 equal to $1$ in the unit ball $B(0,1)$.
 Set $\phi_\epsilon(\ulx)=\frac{1}{\epsilon^m}
 \phi(\frac{\ulx}{\epsilon}),\epsilon>0,$ and then
 $\mathcal F(\phi_\epsilon)(\ulxi)=\mathcal F(\phi)(\epsilon \ulxi).$
 Since $\phi\in S(\mathbf R^m),$ we have
 $\psi(\ulx)=\esssup_{|\ulxi|\geq |\ulx|} |\phi(\ulxi)|\in L^1(\mathbf R^m).$
 Thus $\phi_\epsilon$ is an approximation to the identity \cite[page 13]{SW}.
 We define
$$
g_\epsilon(x_0+\ulx)=\left(f(x_0+\cdot)*\phi_\epsilon\right)(\ulx).
$$
Taking Fourier transform to the both sides, we have
$
\mathcal F(g_\epsilon(x_0+\cdot))=
\mathcal F(f(x_0+\cdot))\mathcal F(\phi_\epsilon),
$
showing that for each fixed $x_0$ the set
${\rm supp} \mathcal F(g_\epsilon)$ in $\mathbf R^m$ is compact.
The function $g_\epsilon(x)$ is left-monogenic and satisfies
$$
\sup_{|x_0|<a}||g_\epsilon(x_0+\cdot)||_{L^2(\mathbf R^m, \mathbf C^{(m)})}\leq C \sup_{|x_0|<a}||f(x_0+\cdot)||_{L^2(\mathbf R^m, \mathbf C^{(m)})} ||\phi_\epsilon||_{L^1(\mathbf R^m, \mathbf C^{(m)})}<\infty.
$$
Hence
\begin{align}\label{tem1}
G_\epsilon(x)=\frac{1}{(2\pi)^m}\int_{\mathbf R^m}e(x,\ulxi)\mathcal F(g_\epsilon)(\ulxi)d\ulxi, \quad x\in S_a,
\end{align}
is well-defined due to compactness of ${\rm supp}\mathcal F(g_\epsilon).$
 In particular, $\mathcal F(G_\epsilon)=\mathcal F(g_\epsilon),$
 and $G_\epsilon$ and $g_\epsilon$ are both left-monogenic in $S_a.$ Note that the two left-monogenic functions, $G_\epsilon$ and $g_\epsilon,$ defined in $S_a,$ have common values
 on $\mathbf R^m,$ and thus have to be identical (see e.g. \cite{Pena,Brackx-Delanghe-Sommen}).
   Therefore,
\begin{align*}
\mathcal F(f(x_0+\cdot))(\ulxi)\mathcal F(\phi_\epsilon)(\ulxi)&=\mathcal F(g_\epsilon(x_0+\cdot))(\ulxi)=(e^{-x_0|\ulxi|}\chi_+(\ulxi)+e^{x_0|\ulxi|}\chi_-(\ulxi))\mathcal F(g_\epsilon)(\ulxi)\\
&=(e^{-x_0|\ulxi|}\chi_+(\ulxi)+e^{x_0|\ulxi|}\chi_-(\ulxi))\mathcal F(f)(\ulxi)\mathcal F(\phi_\epsilon)(\ulxi).
\end{align*}
Thus
\begin{align}\label{tem2}
\mathcal F(f(x_0+\cdot))(\ulxi)=(e^{-x_0|\ulxi|}\chi_+(\ulxi)+e^{x_0|\ulxi|}\chi_-(\ulxi))\mathcal F(f)(\ulxi)
\end{align}
for $\ulxi\in B(0,\frac{1}{\epsilon}).$ Since $\epsilon>0$ is arbitrary,
we see that (\ref{tem2}) holds for all $\ulxi\in\mathbf R^m.$
Replacing $x_0$ by $-x_0$ in (\ref{tem2}), we have
\begin{align}\label{tem3}
\mathcal F(f(-x_0+\cdot))(\ulxi)=(e^{x_0|\ulxi|}\chi_+(\ulxi)+e^{-x_0|\ulxi|}\chi_-(\ulxi))\mathcal F(f)(\ulxi).
\end{align}
Consequently, we have
\begin{align}\label{tem5}
\mathcal F(f(x_0+\cdot))(\ulxi)+\mathcal F(f(-x_0+\cdot))(\ulxi)=(e^{-x_0|\ulxi|}+e^{x_0|\ulxi|})\mathcal F(f)(\ulxi).
\end{align}
By Plancherel's theorem and $f\in H^2(S_a,\mathbf C^{(m)}),$ we have
\begin{align*}
\frac{1}{(2\pi)^m}\int_{\mathbf R^m} |(e^{-x_0|\ulxi|}+e^{x_0|\ulxi|})\mathcal F(f)(\ulxi)|^2 d\ulxi&= \frac{1}{(2\pi)^m}\int_{\mathbf R^m} |\mathcal F(f(x_0+\cdot))(\ulxi)+\mathcal F(f(-x_0+\cdot))(\ulxi)|^2d\ulxi\\
&= \int_{\mathbf R^m} |f(x_0+\ulx)+f(-x_0+\ulx)|^2 d\ulx\\
&\leq C ||f||_{H^2(S_a,\mathbf C^{(m)})}^2\\
&<\infty,
\end{align*}
which gives $e^{a|\ulxi|}\mathcal F(f)(\ulxi)\in L^2(\mathbf R^m, \mathbf C^{(m)}).$
  By applying the Lebesgue dominated convergence theorem to (\ref{tem1}), we can obtain
\begin{align*}
f(x) = \frac{1}{(2\pi)^m}\int_{\mathbf R^m} e(x,\ulxi)\mathcal F(f)(\ulxi)d\ulxi.
\end{align*}
The conditions for using the Lebesgue dominated convergence theorem are verified as follows. By the definition of $g_\epsilon,$ we have
\begin{align*}
\lim_{\epsilon\to 0} e(x,\ulxi)\mathcal F(g_\epsilon)(\ulxi) &=\lim_{\epsilon\to 0} e(x,\ulxi)\mathcal F(f)(\ulxi)\mathcal F(\phi_{\epsilon})(\ulxi)\\
 &=\lim_{\epsilon\to 0} e(x,\ulxi)\mathcal F(f)(\ulxi)\mathcal F(\phi)(\epsilon\ulxi)\\
 & = e(x,\ulxi)\mathcal F(f)(\ulxi), \quad \text{ a.e. } \ulxi\in \mathbf R^m,
\end{align*}
\begin{align*}
|e(x,\ulxi)\mathcal F(g_\epsilon)(\ulxi)|\leq |e(x,\ulxi)\mathcal F(f)(\ulxi)\mathcal F(\phi)(\epsilon \ulxi)|\leq |e(x,\ulxi)\mathcal F(f)(\ulxi)| ||\phi||_{L^1(\mathbf R^m)}
\end{align*}
and
\begin{align*}
\int_{\mathbf R^m}|e(x,\ulxi)\mathcal F(f)(\ulxi)| d\ulxi \leq \left(\int_{\mathbf R^m} e^{-2a|\ulxi|}|e(x,\ulxi)|^2d\ulxi\right)^\frac{1}{2}\left(\int_{\mathbf R^m}e^{2a|\ulxi|}|\mathcal F(f)(\ulxi)|^2d\ulxi\right)^\frac{1}{2}<\infty.
\end{align*}

To complete the proof we need to show uniqueness of
the decomposition (\ref{Decomposition-2}).
 In fact, if there exist $h_+\in H^2(\mathbf R^{m+1}_{+,-a},\mathbf C^{(m)})$
 and $h_-\in H^2(\mathbf R^{m+1}_{-,a},\mathbf C^{(m)})$ such that $f=h_++h_-,$ then
we have $h=f_+-h_+=h_--f_-\in
H^2(\mathbf R^{m+1}_{{+,-a}},\mathbf C^{(m)})\cap H^2(\mathbf R^{m+1}_{-,a},\mathbf C^{(m)}).$ This
indeed implies $h=0$ since $H^2(\mathbf R^{m+1}_{{+,-a}},\mathbf C^{(m)})
\cap H^2(\mathbf R^{m+1}_{-,a},\mathbf C^{(m)})\subset H^2(\mathbf R^{m+1}_{{+}},\mathbf C^{(m)})
\cap H^2(\mathbf R^{m+1}_{-},\mathbf C^{(m)})=\{0\}.$
\end{proof}

\begin{remark} {\bf 1} \label{1} We can identify $H^2(S_a, \mathbf C^{(m)})$ with the closed subspace of
$L^2(\mathbf R^m, \mathbf C^{(m)}):$
$$H^2_a(\mathbf R^m, \mathbf C^{(m)})=\{g\in L^2(\mathbf R^m, \mathbf C^{(m)})\ :
\ e^{a|\ulxi|}g(\ulxi)\in L^2(\mathbf R^m, \mathbf C^{(m)})\}.$$
Let $s_x(\ulxi)=e^{-a|\ulxi|}e(\overline x,\ulxi).$ It is obvious that $s_x\in H^2_a(\mathbf R^m, \mathbf C^{(m)}).$
By Theorem \ref{PW-strips}, we have
\begin{align*}
f(x)=\langle f_a,s_x\rangle_{L^2(\mathbf R^m,\mathbf C^{(m)})},\quad \text{for }f\in H^2(S_a, \mathbf C^{(m)}),
\end{align*}
where $f_a$ is one associated with $f$ in $H^2_a(\mathbf R^m, \mathbf C^{(m)}).$
Then we have an induced inner product on $H^2(S_a, \mathbf C^{(m)})$ defined by
\begin{align*}
\langle f,h \rangle_{H^2(S_a, \mathbf C^{(m)})} =
\langle f_a,h_a\rangle_{L^2(\mathbf R^m, \mathbf C^{(m)})},\quad \text{for }f,g\in H^2(S_a, \mathbf C^{(m)}),
\end{align*}
where $f_a$ and $h_a,$ respectively, correspond to $f$ and $h$ in $H^2_a(\mathbf R^m, \mathbf C^{(m)}).$
Accordingly, the reproducing kernel $S(w,\overline x)$ for $H^2(S_a, \mathbf C^{(m)})$ in the above induced norm is given by
\begin{align*}
S(w,\overline x)&=\langle s_x, s_w\rangle_{L^2(\mathbf R^m,\mathbf C^{(m)})}\\
&=\frac{1}{(2\pi)^m}\int_{\mathbf R^m}e^{-2a|\ulxi|}e(w+\overline x,\ulxi)d\ulxi\\
&=\frac{1}{(2\pi)^m}\int_{\mathbf R^m}e^{-2a|\ulxi|}e^+(w+\overline x,\ulxi)d\ulxi + \frac{1}{(2\pi)^m}\int_{\mathbf R^m}e^{-2a|\ulxi|}e^-(w+\overline x,\ulxi)d\ulxi\\
&=\frac{1}{2\sigma_m}\frac{w+\overline x+2a}{|w+\overline x+2a|^{m+1}}-\frac{1}{2\sigma_m}\frac{w+\overline x-2a}{|w+\overline x-2a|^{m+1}}\\
&=S_{+,-a}(w,\overline x)+S_{-,a}(w,\overline x),
\end{align*}
where $S_{+,-a}(w,\overline x)$ and $S_{-,a}(w,\overline x)$
are the Szeg\"o kernels for $H^2(\mathbf R^{m+1}_{+,-a}, \mathbf C^{(m)})$
and $H^2(\mathbf R^{m+1}_{-,a}, \mathbf C^{(m)})$, respectively.\end{remark}

\begin{remark} {\bf 2}\label{2}
We note that Theorem \ref{PW-strips} can be generalized to $H^p(S_a, \mathbf C^{(m)}),1\leq p\leq 2,$  stated as
\begin{thm}\label{PW-strips-g}
	Suppose that $1\leq p\leq 2.$ If $g(\ulxi)\in L^q(\mathbf R^m, \mathbf C^{(m)}),q=\frac{p}{p-1},$ and $e^{a|\ulxi|}\chi_+(\ulxi)g(\ulxi)$ and $e^{a|\ulxi|}\chi_-(\ulxi)g(\ulxi)$ are the Fourier transforms of some functions in $L^p(\mathbf R^m)$, then
	\begin{align}\label{cond2-g}
	f(x)=\frac{1}{(2\pi)^m}\int_{\mathbf R^m}e(x,\ulxi)g(\ulxi)d\ulxi,\quad x\in S_a,
	\end{align}
	is in $H^p(S_a, \mathbf C^{(m)}).$ Moreover, there exist $f_+\in H^p(\mathbf R^{m+1}_{+,-a}, \mathbf C^{(m)})$ and $f_-\in H^p(\mathbf R^{m+1}_{-,a}, \mathbf C^{(m)})$ such that
	\begin{align}\label{Decomposition-g}
	f(x)=f_+(x)+f_-(x),\quad x\in S_a,
	\end{align}
	where the above decomposition is unique.
	
	Conversely, if $f\in H^p(S_a, \mathbf C^{(m)}),1\leq p\leq 2,$ then $g(\ulxi)=\mathcal F(f)(\ulxi)$ such that
	$$
	e^{a|\ulxi|}g(\ulxi)\in L^q(\mathbf R^m, \mathbf C^{(m)})
	$$
	and (\ref{cond2-g}) holds.

\end{thm}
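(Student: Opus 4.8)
The plan is to mimic the proof of Theorem~\ref{PW-strips}, replacing Plancherel's identity by the Hausdorff--Young inequality and the $L^2$ Paley--Wiener theorem for half-spaces by its $L^p$ version (the $L^p$ Paley--Wiener theorem for $H^p(\mathbf R^{m+1}_+, \mathbf C^{(m)})$, together with its upper-/lower-translate analogues as in Lemma~\ref{PW-Upper-Hardy}). For the direct part I would first use the projection identities~\eqref{important} to split $g = \chi_+ g + \chi_- g$, so that
$$
e(x,\ulxi)g(\ulxi) = e^+(x,\ulxi)g(\ulxi) + e^-(x,\ulxi)g(\ulxi)
$$
since $e^{\pm}(x,\ulxi) = e^{\pm}(x,\ulxi)\chi_{\pm}(\ulxi)$ and $\chi_+\chi_- = \chi_-\chi_+ = 0$. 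By hypothesis $\mathcal F(G_+) := e^{a|\ulxi|}\chi_+(\ulxi)g(\ulxi)$ is the Fourier transform of some $G_+ \in L^p(\mathbf R^m, \mathbf C^{(m)})$, and it satisfies $\chi_+\mathcal F(G_+) = \mathcal F(G_+)$; hence the $L^p$ Paley--Wiener theorem for the half-space produces $F_+ \in H^p(\mathbf R^{m+1}_+, \mathbf C^{(m)})$ with boundary value $G_+$ and with $\mathcal F(F_+(x_0' + \cdot))(\ulxi) = e^{-x_0'|\ulxi|}\mathcal F(G_+)(\ulxi)$ for $x_0' > 0$. The translate $f_+(x) := F_+((x_0 + a) + \ulx)$ then lies in $H^p(\mathbf R^{m+1}_{+,-a}, \mathbf C^{(m)})$, and since for $x_0 > -a$ the function $e^{-(x_0 + a)|\ulxi|}\mathcal F(G_+)(\ulxi) = e^{-x_0|\ulxi|}\chi_+(\ulxi)g(\ulxi)$ is absolutely integrable (exponential decay against an $L^q$, or $L^\infty$, factor), inverting the Fourier transform yields $f_+(x) = \frac{1}{(2\pi)^m}\int_{\mathbf R^m} e^+(x,\ulxi)g(\ulxi)\,d\ulxi$. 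The mirror construction with $\chi_-$ gives $f_- \in H^p(\mathbf R^{m+1}_{-,a}, \mathbf C^{(m)})$ equal to the $e^-$-integral.

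Consequently $f = f_+ + f_-$ coincides with the integral in~\eqref{cond2-g}, and $f \in H^p(S_a, \mathbf C^{(m)})$ follows from the triangle inequality in $L^p$ applied slice-by-slice, each $|x_0|<a$ satisfying $x_0 > -a$ and $x_0 < a$. Uniqueness of~\eqref{Decomposition-g} is established exactly as in Theorem~\ref{PW-strips}: a second splitting $f = h_+ + h_-$ would force $h := f_+ - h_+ = h_- - f_-$ to continue monogenically across $S_a$ to all of $\mathbf R^{m+1}$ and to lie in $H^p(\mathbf R^{m+1}_+, \mathbf C^{(m)}) \cap H^p(\mathbf R^{m+1}_-, \mathbf C^{(m)})$, which is $\{0\}$ because a monogenic function on $\mathbf R^{m+1}$ whose slices are uniformly bounded in $L^p$ must vanish (apply the mean value property on balls of radius $R$, use Hölder, and let $R \to \infty$; or, for $p = 2$, reuse the Fourier-support argument).

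For the converse, let $f \in H^p(S_a, \mathbf C^{(m)})$, $1 \le p \le 2$, and set $g := \mathcal F(f)$, the Fourier transform of the restriction $f|_{\mathbf R^m} \in L^p(\mathbf R^m, \mathbf C^{(m)})$, which lies in $L^q(\mathbf R^m, \mathbf C^{(m)})$ by Hausdorff--Young. I would reproduce the mollification argument of Theorem~\ref{PW-strips}: convolving each slice with an approximate identity $\phi_\epsilon$ whose Fourier transform is compactly supported and equals $1$ near the origin yields monogenic $g_\epsilon$ on $S_a$ with $\mathcal F(g_\epsilon)$ compactly supported, hence representable by the absolutely convergent integral $G_\epsilon(x) = \frac{1}{(2\pi)^m}\int e(x,\ulxi)\mathcal F(g_\epsilon)(\ulxi)\,d\ulxi$; since $G_\epsilon$ and $g_\epsilon$ are monogenic on $S_a$ and agree on $\mathbf R^m$, they coincide, which forces
$$
\mathcal F(f(x_0 + \cdot))(\ulxi) = \bigl(e^{-x_0|\ulxi|}\chi_+(\ulxi) + e^{x_0|\ulxi|}\chi_-(\ulxi)\bigr)\mathcal F(f)(\ulxi), \qquad |x_0| < a.
$$
Adding this identity to its reflection $x_0 \mapsto -x_0$ and applying Hausdorff--Young gives $\|(e^{-x_0|\ulxi|} + e^{x_0|\ulxi|})g\|_{L^q} \le C\bigl(\|f(x_0 + \cdot)\|_{L^p} + \|f(-x_0 + \cdot)\|_{L^p}\bigr) \le 2C\|f\|_{H^p(S_a, \mathbf C^{(m)})}$, and letting $x_0 \uparrow a$ (monotone convergence, the left side being nondecreasing in $x_0 \ge 0$) yields $e^{a|\ulxi|}g \in L^q(\mathbf R^m, \mathbf C^{(m)})$. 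Finally, passing to the limit $\epsilon \to 0$ in $G_\epsilon(x) = g_\epsilon(x)$ establishes~\eqref{cond2-g}: the right side converges by dominated convergence with majorant $\|\phi\|_{L^1}\,|e(x,\ulxi)g(\ulxi)|$, integrable because $|e^{-a|\ulxi|}e(x,\ulxi)| \le e^{-(a - |x_0|)|\ulxi|}$ decays exponentially on the open strip, so $\int |e(x,\ulxi)g(\ulxi)|\,d\ulxi \le \|e^{-a|\ulxi|}e(x,\cdot)\|_{L^p}\,\|e^{a|\ulxi|}g\|_{L^q} < \infty$, while the left side converges since $g_\epsilon(x) \to f(x)$ pointwise on $S_a$ by continuity of the slices of $f$.

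The hard part will be the converse direction. Unlike in the Hilbert-space case $\mathcal F(f)$ is controlled only through Hausdorff--Young --- and at the endpoint $p=1$ only as an $L^\infty$ function --- so one must check carefully that each Fourier-analytic step (the multiplier identity for $\mathcal F(f(x_0+\cdot))$, the limiting argument producing $e^{a|\ulxi|}g \in L^q$, and the dominated-convergence passage to the integral representation) goes through with the correct conjugate exponents; the exponential gain $e^{-(a-|x_0|)|\ulxi|}$, available precisely because $S_a$ is open, is what makes the relevant Hölder estimates finite. By contrast the direct part and the uniqueness statement are routine once the half-space $L^p$ Paley--Wiener theorem is granted.
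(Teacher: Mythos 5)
Your proposal is correct and follows essentially the same route as the paper's proof: the direct part reduces to the half-space $L^p$ theory (the paper writes this out via the $L^p$-bounded Szeg\"o projection for $1<p<2$ and the Poisson integral for $p=1$, where you invoke the half-space Paley--Wiener theorem as a black box, which amounts to the same thing), uniqueness comes from the Liouville-type intersection lemma, and the converse uses the same mollification argument as Theorem \ref{PW-strips} with Plancherel replaced by Hausdorff--Young and Cauchy--Schwarz replaced by the $p$--$q$ H\"older pairing. Your handling of the limit $x_0\uparrow a$ by monotonicity is in fact slightly more careful than the paper's terse statement of that step.
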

\begin{proof}
	For $p=2$ the result follows from Theorem \ref{PW-strips}. In the following we only need to consider $1\leq p<2.$ \\
	We first assume that $g(\ulxi)\in L^q(\mathbf R^m, \mathbf C^{(m)})$ such that there exist $g_+(\ulx),g_-(\ulx)\in L^p(\mathbf R^m, \mathbf C^{(m)})$ satisfying
	$
	e^{a|\ulxi|}\chi_+(\ulxi) g(\ulxi) = \mathcal F(g_+)(\ulxi)
	$
	and
	$
	e^{a|\ulxi|}\chi_-(\ulxi) g(\ulxi) = \mathcal F(g_-)(\ulxi).
	$
We define
	\begin{align*}
	f(x) &= \frac{1}{(2\pi)^m}\int_{\mathbf R^m}e(x,\ulxi)g(\ulxi)d\ulxi \\
	&= \frac{1}{(2\pi)^m}\int_{\mathbf R^m} e^+(x,\ulxi)g(\ulxi)d\ulxi + \frac{1}{(2\pi)^m}\int_{\mathbf R^m} e^-(x,\ulxi)g(\ulxi)d\ulxi\\
	&=f_+(x)+f_-(x).
	\end{align*}

	For $1<p<2,$ we have
	\begin{align*}
	f_+(x) & = \frac{1}{(2\pi)^m}\int_{\mathbf R^m} e^{i\langle \ulx,\ulxi\rangle}e^{-x_0|\ulxi|}\chi_+(\ulxi)g(\ulxi)d\ulxi \\
	& = \frac{1}{(2\pi)^m}\int_{\mathbf R^m} e^{i\langle \ulx,\ulxi\rangle}e^{-(a+x_0)|\ulxi|}\chi_+(\ulxi)\mathcal F(g_+)(\ulxi)d\ulxi \\
	& = \int_{\mathbf R^m} {S_{+,-a}(-\ulw, x-a)} g_+(\ulw)d\ulw,
	\end{align*}
	where the last equality is the Szeg\"o projection of $H^p(\mathbf R^{m+1}_{+,-a}, \mathbf C^{(m)}).$ The fact $f_+\in H^p(\mathbf R^{m+1}_{+,-a}, \mathbf C^{(m)})$ then follows from the $L^p$-boundedness of the Sezg\"o projection (see e.g. \cite{Gilbert-Murray}). Similarly, one can show $f_-(x)\in H^p(\mathbf R^{m+1}_{-,a}, \mathbf C^{(m)}).$ Thus $f\in H^p(S_a, \mathbf C^{(m)}),1<p<2.$
	
	For $p=1,q=\infty$ we define
	\begin{align*}
	 G_+(x_0+\ulx) = \int_{\mathbf R^m} P_{+,-a}(\ulx-\ulw,x_0-a)g_+(\ulw)d\ulw,
	\end{align*}
	where $P_{+,-a}(\ulx,x_0)=\frac{1}{2\sigma_m}\frac{x_0+2a}{((x_0+2a)^2+|\ulx|^2)^\frac{m+1}{2}}$ is the Poisson kernel on $\mathbf R^{m+1}_{+,-a}.$
	We then have
	\begin{align*}
	f_+(x) &=\frac{1}{(2\pi)^m}\int_{\mathbf R^m} e^{i\langle \ulx,\ulxi\rangle}e^{-(a+x_0)|\ulxi|}\chi_+(\ulxi)\mathcal F(g_+)(\ulxi)d\ulxi\\
	&= \int_{\mathbf R^m} P_{+,-a}(\ulx-\ulw,x_0-a)g_+(\ulw)d\ulw\\
	&=G_+(x).
	\end{align*}
We note that $G_+(x_0+\ulx)\in L^1(\mathbf R^m, \mathbf C^{(m)})$ since
	\begin{align*}
	||G_+(x_0+\ulx)||_{L^1(\mathbf R^m, \mathbf C^{(m)})} &\leq C||g_+||_{L^1(\mathbf R^m, \mathbf C^{(m)})}||P_{+,-a}(\cdot,x_0-a)||_{L^1(\mathbf R^m, \mathbf C^{(m)})}\\
	&=C||g_+||_{L^1(\mathbf R^m, \mathbf C^{(m)})}<\infty,
	\end{align*}
	where $C$ is a positive constant.
	
	The order of taking derivative and taking integral may be exchanged, due to use of the Lebesgue dominated convergence theorem, and thus $f_+$ is left-monogenic on $\mathbf R^{m+1}_{+,-a}$. Thus $f_+\in H^1(\mathbf R^{m+1}_{+,-a}, \mathbf C^{(m)}).$ Similarly, we also have $f_-\in H^1(\mathbf R^{m+1}_{-,a}, \mathbf C^{(m)})$. Therefore, $f\in H^1(S_a, \mathbf C^{(m)}).$ 	
	The uniqueness of (\ref{Decomposition-g}), in fact, is given by Lemma \ref{lem3.5}.
	
	Next we will prove the necessity condition of $f\in H^p(S_a, \mathbf C^{(m)})$ in the theorem.
	Assume that $f\in H^p(S_a, \mathbf C^{(m)}),1\leq p<2.$ The proof of this part is similar to that of the proof of Theorem \ref{PW-strips}. As in Theorem \ref{PW-strips}, we define
	\begin{align*}
	g_\epsilon(x_0+\ulx) = (f(x_0+\cdot)* \phi_\epsilon)(\ulx),
	\end{align*}
	and have $\mathcal F(g_\epsilon(x_0+\cdot))=\mathcal F(f(x_0+\cdot))\mathcal F(\phi_\epsilon),$ which means that supp$\mathcal F(g_\epsilon)$ is compact. By Young's inequality, we have
	\begin{align*}
	\sup_{|x_0|<a}||g_\epsilon(x_0+\cdot)||_{L^p(\mathbf R^m, \mathbf C^{(m)})}\leq C \sup_{|x_0|<a} ||f(x_0+\cdot)||_{L^p(\mathbf R^m, \mathbf C^{(m)})}||\phi_\epsilon||_{L^1(\mathbf R^m, \mathbf C^{(m)})}<\infty,
	\end{align*}
	which amounts that $g_\epsilon \in H^p(S_a, \mathbf C^{(m)}).$ Define
	\begin{align}\label{tem-g}
	G_\epsilon(x)=\frac{1}{(2\pi)^m}\int_{\mathbf R^m}e(x,\ulxi)\mathcal F(g_\epsilon)(\ulxi)d\ulxi, \quad x\in S_a.
	\end{align}
	By the argument used in Theorem \ref{PW-strips}, we have
	\begin{align}\label{eq_tem3}
	\mathcal F(f(x_0+\cdot))(\ulxi)=(e^{-x_0|\ulxi|}\chi_+(\ulxi)+e^{x_0|\ulxi|}\chi_-(\ulxi))\mathcal F(f)(\ulxi).
	\end{align}

	Then, by Hausdorff-Young's inequality, from (\ref{eq_tem3}) we can show that $(e^{a|\ulxi|}-e^{-a|\ulxi|})\mathcal F(f)(\ulxi)\in L^q(\mathbf R^m, \mathbf C^{(m)})$ and
	$(e^{a|\ulxi|}+e^{-a|\ulxi|})\mathcal F(f)(\ulxi)\in L^q(\mathbf R^m, \mathbf C^{(m)})$, which give $e^{a|\ulxi|}\mathcal F(f)(\ulxi)\in L^q(\mathbf R^m, \mathbf C^{(m)}).$
	Finally, as in the proof of Theorem \ref{PW-strips}, applying the Lebesgue dominated convergence theorem to (\ref{tem-g}), we have
	\begin{align*}
	f(x) = \frac{1}{(2\pi)^m}\int_{\mathbf R^m}e(x,\ulxi)\mathcal F(f)(\ulxi)d\ulxi.
	\end{align*}

\end{proof}
\begin{lem}\label{lem3.5}
For $a,b>0,1\leq p<\infty,$ $H^p(\mathbf R^{m+1}_{+,-a}, \mathbf C^{(m)})\cap H^p(\mathbf R^{m+1}_{-,b}, \mathbf C^{(m)})=\{0\}.$
\end{lem}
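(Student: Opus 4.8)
The plan is to show that any $f$ lying in the intersection extends to a left-monogenic function on all of $\mathbf R^{m+1}$ whose slices $f(x_0+\cdot)$ have $L^p(\mathbf R^m)$-norms bounded uniformly in $x_0$, and then to force $f\equiv 0$ by a Liouville argument after smoothing in the transverse variables $\ulx$.

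First I would record the geometry: since $-a<0<b$, one has $\mathbf R^{m+1}_{+,-a}\cup \mathbf R^{m+1}_{-,b}=\mathbf R^{m+1}$, while the overlap $\mathbf R^{m+1}_{+,-a}\cap \mathbf R^{m+1}_{-,b}=\{x=x_0+\ulx:-a<x_0<b\}$ is a connected open set containing the hyperplane $\mathbf R^m=\{x_0=0\}$ in its interior. An element of the intersection is given by a left-monogenic function on $\{x_0>-a\}$ and a left-monogenic function on $\{x_0<b\}$ that agree on $\mathbf R^m$; since a left-monogenic function on a connected open set is determined by its restriction to a hyperplane it contains (the uniqueness fact recalled in the excerpt, cf. \cite{Pena,Brackx-Delanghe-Sommen}), these two functions coincide on the overlap and therefore patch into a single left-monogenic function $F$ on $\mathbf R^{m+1}$. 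Moreover every $x_0\in\mathbf R$ satisfies $x_0>-a$ or $x_0<b$, so
\[
M:=\sup_{x_0\in\mathbf R}\|F(x_0+\cdot)\|_{L^2(\mathbf R^m,\mathbf C^{(m)})}^{\phantom{p}}\ \le\ \max\left\{\|f\|_{H^p(\mathbf R^{m+1}_{+,-a},\mathbf C^{(m)})},\ \|f\|_{H^p(\mathbf R^{m+1}_{-,b},\mathbf C^{(m)})}\right\}<\infty,
\]
where the $L^2$ norm should of course read $L^p$ (I keep the $L^p(\mathbf R^m,\mathbf C^{(m)})$ norm throughout).

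Next I would smooth $F$ in the $\ulx$-direction: for $\epsilon>0$ take the scalar approximate identity $\phi_\epsilon$ from the proof of Theorem \ref{PW-strips} and set $F_\epsilon(x_0+\ulx)=(F(x_0+\cdot)*\phi_\epsilon)(\ulx)$. As in that proof, convolution in the $\ulx$-variables preserves left-monogenicity, so $F_\epsilon$ is left-monogenic on $\mathbf R^{m+1}$, hence each of its Clifford components is harmonic on $\mathbf R^{m+1}$. By Hölder's inequality $|F_\epsilon(x_0+\ulx)|\le \|F(x_0+\cdot)\|_{L^p}\,\|\phi_\epsilon\|_{L^{p'}}\le M\|\phi_\epsilon\|_{L^{p'}}$ for every $x_0+\ulx\in\mathbf R^{m+1}$ (with $p'=p/(p-1)$, and $\|\phi_\epsilon\|_{L^\infty}$ when $p=1$), so $F_\epsilon$ is bounded, componentwise harmonic on $\mathbf R^{m+1}$, and therefore constant by Liouville's theorem. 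On the other hand Young's inequality gives $\|F_\epsilon(x_0+\cdot)\|_{L^p(\mathbf R^m)}\le M\|\phi\|_{L^1}<\infty$, and a nonzero constant does not lie in $L^p(\mathbf R^m)$ when $p<\infty$; hence $F_\epsilon\equiv 0$. Letting $\epsilon\to 0$, the approximate-identity property yields $F(x_0+\cdot)=\lim_{\epsilon\to 0}F_\epsilon(x_0+\cdot)=0$ in $L^p(\mathbf R^m)$ for each $x_0$, so $F\equiv 0$ and $f=0$.

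I expect the only genuine subtlety to be the gluing step — checking that the two monogenic realizations really combine into one monogenic function on all of $\mathbf R^{m+1}$ — since the Liouville conclusion fails on a half-space (there are many bounded harmonic functions there), so the argument truly needs the full space. Everything else (monogenicity being preserved under transverse convolution, and the $L^p$-convergence of the mollifications) is already invoked in the proof of Theorem \ref{PW-strips} and is routine. As an alternative that bypasses the mollification, one may use that $|F|^p$ is subharmonic on $\mathbf R^{m+1}$ for a $\mathbf C^{(m)}$-valued left-monogenic $F$ when $p\ge 1$ (see \cite{Gilbert-Murray}); the sub-mean-value inequality over a ball $B(x,r)$ then gives $|F(x)|^p\le |B(x,r)|^{-1}\int_{B(x,r)}|F|^p\le C\,M^p r^{-m}\to 0$ as $r\to\infty$.
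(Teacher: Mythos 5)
Your proposal is correct, and it is worth noting that it actually contains two complete arguments. The paper's own proof is the second one you sketch: it invokes the subharmonicity of $|f|^p$ to conclude that $|f|$ is bounded on $\mathbf R^{m+1}$ and tends to $0$ as $|x_0|\to\infty$, and then applies Liouville's theorem for monogenic functions to get $f\equiv\mathrm{const}=0$. Your sub-mean-value computation $|F(x)|^p\le |B(x,r)|^{-1}\int_{B(x,r)}|F|^p\le C M^p r^{-m}$ is in fact a cleaner rendering of that route, since it yields $F(x)=0$ directly for every $x$ and dispenses with Liouville altogether; the only external input is that $|F|^p$ is subharmonic for left-monogenic $F$ and $p\ge 1$, which the paper also uses (in Theorem \ref{PW-Bergman} and Remark 4). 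Your primary route --- mollifying in $\ulx$, using H\"older to bound $F_\epsilon$ pointwise, applying the classical scalar Liouville theorem componentwise, and then letting $\epsilon\to 0$ --- is genuinely different: it trades the subharmonicity lemma for the approximate-identity machinery already set up in the proof of Theorem \ref{PW-strips}, at the cost of being longer. Your care over the gluing step is also well placed: the paper's one-line proof silently treats an element of the intersection as a single left-monogenic function on all of $\mathbf R^{m+1}$, which is exactly the identification you justify via uniqueness of monogenic extension from $\mathbf R^m$ (the same fact the paper uses to identify $G_\epsilon$ with $g_\epsilon$), and without which the Liouville step would indeed fail, as you observe.
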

\noindent\begin{proof}For $f\in H^p(\mathbf R^{m+1}_{+,-a}, \mathbf C^{(m)})
\cap H^p(\mathbf R^{m+1}_{-,b}, \mathbf C^{(m)}),$ using the subharmonicity of $|f|^p,$
 we can show that $|f(x)|$ is bounded, and
 $\lim_{|x_0|\to \infty}|f(x_0+\ulx)|=0.$ Then by Liouville's
 theorem for monogenic functions (see \cite{Brackx-Delanghe-Sommen}), $f(x)$ has to be a constant, and then $f(x)=0.$\end{proof}
\end{remark}

\subsection{$A^2(S_a,\mathbf C^{(m)})$ as a RKHS}
In this section we study the Paley-Wiener theorem of $A^2(S_a, \mathbf C^{(m)}).$
The technique used in the following proof is adapted from \cite{BG}
 (see also \cite{Dang-Mai-Qian}).
\begin{thm}\label{PW-Bergman}
$f\in A^2(S_a, \mathbf C^{(m)})$ if and only if
\begin{align*}
f(x)=\frac{1}{(2\pi)^m}\int_{\mathbf R^m}e(x,\ulxi)g(\ulxi)d\ulxi, \quad x\in S_a,
\end{align*}
where $g(\ulxi)=\mathcal F(f)(\ulxi)\in L^2(\mathbf R^m, \mathbf C^{(m)})$ such that
\begin{align*}
\frac{1}{(2\pi)^m}\int_{\mathbf R^m} (e^{2a|\ulxi|}-e^{-2a|\ulxi|})\frac{|g(\ulxi)|^2}{2|\ulxi|}d\ulxi <\infty.
\end{align*}
Moreover, the Bergman kernel of $A^2(S_a, \mathbf C^{(m)})$ is
\begin{align}\label{Bergman-kernel-Sa}
B(w,\overline x)=\frac{1}{(2\pi)^m}\int_{\mathbf R^m}e(w+\overline x,\ulxi) \frac{2|\ulxi|}{e^{2a|\ulxi|}-e^{-2a|\ulxi|}} d\ulxi.
\end{align}
\end{thm}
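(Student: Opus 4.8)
The plan is to deduce Theorem~\ref{PW-Bergman} from the Hardy-space result on strips, Theorem~\ref{PW-strips}, by exhausting $S_a$ with the sub-strips $S_{a'}$, $0<a'<a$, and then to read off the Bergman kernel by identifying $A^2(S_a,\mathbf C^{(m)})$ with a weighted $L^2$-space on $\mathbf R^m$. For the first step let $f\in A^2(S_a,\mathbf C^{(m)})$. Each component of a left-monogenic function is harmonic, so $|f|^2$ is subharmonic on $S_a$. Fixing $0<a'<a$ and $0<\delta<a-a'$, the ball of radius $\delta$ about $x_0+\ulx$ lies in $S_a$ whenever $|x_0|\le a'$, so the sub-mean-value inequality bounds $|f(x_0+\ulx)|^2$ by a constant (depending on $\delta$) times $\int_{|y-(x_0+\ulx)|<\delta}|f(y)|^2\,dy$; integrating this in $\ulx\in\mathbf R^m$ and interchanging the order of integration (Tonelli), the $\ulx$-integral of the indicator of $\{|y-(x_0+\ulx)|<\delta\}$ is at most the volume of an $m$-ball of radius $\delta$, so $\sup_{|x_0|<a'}\int_{\mathbf R^m}|f(x_0+\ulx)|^2\,d\ulx\le C_{\delta}\,\|f\|_{A^2(S_a,\mathbf C^{(m)})}^2<\infty$. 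Hence $f|_{S_{a'}}\in H^2(S_{a'},\mathbf C^{(m)})$ for every $a'<a$.

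By Theorem~\ref{PW-strips} applied on $S_{a'}$, there is $g=\mathcal F(f)\in L^2(\mathbf R^m,\mathbf C^{(m)})$ with $e^{a'|\ulxi|}g\in L^2(\mathbf R^m,\mathbf C^{(m)})$ and $f(x)=\frac{1}{(2\pi)^m}\int_{\mathbf R^m}e(x,\ulxi)g(\ulxi)\,d\ulxi$ for $x\in S_{a'}$; since $g$ is independent of $a'$ and $\bigcup_{a'<a}S_{a'}=S_a$, this representation holds on all of $S_a$. Moreover, just as in the proof of Theorem~\ref{PW-strips} (Plancherel's theorem together with $\overline{\chi_\pm(\ulxi)}=\chi_\pm(\ulxi)$ and the projection relations (\ref{important})), for every $|x_0|<a$
\begin{align*}
\int_{\mathbf R^m}|f(x_0+\ulx)|^2\,d\ulx=\frac{1}{(2\pi)^m}\int_{\mathbf R^m}\Big(e^{-2x_0|\ulxi|}\,|\chi_+(\ulxi)g(\ulxi)|^2+e^{2x_0|\ulxi|}\,|\chi_-(\ulxi)g(\ulxi)|^2\Big)\,d\ulxi .
\end{align*}
Integrating over $x_0\in(-a,a)$, using Tonelli's theorem, $\int_{-a}^a e^{\mp 2x_0|\ulxi|}\,dx_0=\frac{e^{2a|\ulxi|}-e^{-2a|\ulxi|}}{2|\ulxi|}$, and $|\chi_+(\ulxi)g(\ulxi)|^2+|\chi_-(\ulxi)g(\ulxi)|^2=|g(\ulxi)|^2$, one obtains
\begin{align*}
\|f\|_{A^2(S_a,\mathbf C^{(m)})}^2=\frac{1}{(2\pi)^m}\int_{\mathbf R^m}\frac{e^{2a|\ulxi|}-e^{-2a|\ulxi|}}{2|\ulxi|}\,|g(\ulxi)|^2\,d\ulxi ,
\end{align*}
which is precisely the asserted spectral condition. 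Conversely, given $g\in L^2(\mathbf R^m,\mathbf C^{(m)})$ satisfying this condition, the elementary bound $e^{2a'|\ulxi|}\le C_{a'}\,\frac{e^{2a|\ulxi|}-e^{-2a|\ulxi|}}{2|\ulxi|}$, valid for each $a'<a$ since the quotient is continuous on $[0,\infty)$ and exponentially decaying, gives $e^{a'|\ulxi|}g\in L^2(\mathbf R^m,\mathbf C^{(m)})$, so Theorem~\ref{PW-strips} makes $f(x)=\frac{1}{(2\pi)^m}\int_{\mathbf R^m}e(x,\ulxi)g(\ulxi)\,d\ulxi$ left-monogenic on $S_a$, and the identity above forces $\|f\|_{A^2(S_a,\mathbf C^{(m)})}<\infty$; thus $f\in A^2(S_a,\mathbf C^{(m)})$.

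It remains to find the reproducing kernel. By the characterization just proved, $f\mapsto g=\mathcal F(f)$ is a bijection from $A^2(S_a,\mathbf C^{(m)})$ onto $\{g\in L^2(\mathbf R^m,\mathbf C^{(m)}):\int_{\mathbf R^m}w(\ulxi)|g(\ulxi)|^2\,d\ulxi<\infty\}$, where $w(\ulxi)=\frac{e^{2a|\ulxi|}-e^{-2a|\ulxi|}}{2|\ulxi|}$, which makes $A^2(S_a,\mathbf C^{(m)})$ a Hilbert space; polarizing the slice-wise Plancherel computation of the previous paragraph (with a routine Fubini step) yields, for $f,h\in A^2(S_a,\mathbf C^{(m)})$,
\begin{align*}
\langle f,h\rangle_{A^2(S_a,\mathbf C^{(m)})}=\int_{-a}^a\int_{\mathbf R^m}\overline{h(x)}f(x)\,d\ulx\,dx_0=\frac{1}{(2\pi)^m}\int_{\mathbf R^m}w(\ulxi)\,\overline{\mathcal F(h)(\ulxi)}\,\mathcal F(f)(\ulxi)\,d\ulxi .
\end{align*}
Since $|e(\overline x,\ulxi)|^2=\tfrac12(e^{-2x_0|\ulxi|}+e^{2x_0|\ulxi|})$ (using $|\chi_\pm(\ulxi)|^2=\tfrac12$ and $\chi_+(\ulxi)\chi_-(\ulxi)=0$) and $|x_0|<a$, the function $\ulxi\mapsto e(\overline x,\ulxi)/w(\ulxi)$ lies in the weighted space above (indeed $|e(\overline x,\ulxi)|^2/w(\ulxi)$ decays exponentially in $|\ulxi|$), so by the converse direction the function $\frac{1}{(2\pi)^m}\int_{\mathbf R^m}e(w,\ulxi)\,\frac{e(\overline x,\ulxi)}{w(\ulxi)}\,d\ulxi$ belongs to $A^2(S_a,\mathbf C^{(m)})$ with Fourier transform $e(\overline x,\ulxi)/w(\ulxi)$; by the multiplicativity $e(w,\ulxi)e(\overline x,\ulxi)=e(w+\overline x,\ulxi)$ (a consequence of (\ref{important})) this function equals $B(w,\overline x)$ as in (\ref{Bergman-kernel-Sa}). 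Finally, using $\overline{e(\overline x,\ulxi)}=e(x,\ulxi)$ and that $w$ is real-valued,
\begin{align*}
\langle f,B(\cdot,\overline x)\rangle_{A^2(S_a,\mathbf C^{(m)})}=\frac{1}{(2\pi)^m}\int_{\mathbf R^m}w(\ulxi)\,\frac{e(x,\ulxi)}{w(\ulxi)}\,\mathcal F(f)(\ulxi)\,d\ulxi=\frac{1}{(2\pi)^m}\int_{\mathbf R^m}e(x,\ulxi)\mathcal F(f)(\ulxi)\,d\ulxi=f(x),
\end{align*}
so $B(w,\overline x)$ reproduces $A^2(S_a,\mathbf C^{(m)})$, which is therefore a RKHS.

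The main obstacle is the very first step: upgrading the area-type control defining $A^2(S_a,\mathbf C^{(m)})$ to uniform control of the slice $L^2$-norms on each sub-strip $S_{a'}$ via the subharmonicity of $|f|^2$, which is what legitimately reduces the problem to the already-established Paley--Wiener theorem for $H^2(S_{a'},\mathbf C^{(m)})$. Once this reduction is in hand, the remainder is bookkeeping with Plancherel's and Tonelli's theorems and with the projection algebra of the symbols $\chi_\pm(\ulxi)$.
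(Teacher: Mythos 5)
Your proof is correct, and the first (and harder) half --- the spectral characterization of $A^2(S_a,\mathbf C^{(m)})$ --- follows essentially the same route as the paper: subharmonicity of $|f|^2$ plus the sub-mean-value inequality to show that the restriction of $f$ to each sub-strip lies in $H^2(S_{a'},\mathbf C^{(m)})$, then Theorem \ref{PW-strips} slice-wise, then Plancherel and Tonelli in $x_0$ to produce the weight $w(\ulxi)=\frac{e^{2a|\ulxi|}-e^{-2a|\ulxi|}}{2|\ulxi|}$. Where you genuinely diverge is the derivation of the Bergman kernel. The paper first invokes the Riesz representation theorem to get an abstract reproducing element $B(\cdot,\overline x)$, then applies the resulting identity to an auxiliary kernel $\widetilde B$ (defined only afterwards, in Remark 3, as the sum of the Bergman kernels of the two half-spaces) and uses uniqueness of the Fourier transform to solve the equation $\int_{-a}^a\overline{\mathcal F(B)}(\ulxi,\overline x)\bigl(e^{-2w_0|\ulxi|}\chi_+(\ulxi)+e^{2w_0|\ulxi|}\chi_-(\ulxi)\bigr)dw_0=e(x,\ulxi)$ for $\mathcal F(B)$. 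You instead exhibit the candidate kernel directly: the weighted-$L^2$ isometry $f\mapsto\mathcal F(f)$ turns the polarized Plancherel identity into $\langle f,h\rangle_{A^2}=\frac{1}{(2\pi)^m}\int w\,\overline{\mathcal F(h)}\,\mathcal F(f)\,d\ulxi$, the function with Fourier transform $e(\overline x,\cdot)/w$ visibly lies in the space, and the projection algebra of $\chi_\pm$ (giving $e(w,\ulxi)e(\overline x,\ulxi)=e(w+\overline x,\ulxi)$ and $\overline{e(\overline x,\ulxi)}=e(x,\ulxi)$) makes the reproducing property a one-line computation. Your version buys a self-contained and verifiably constructive identification of $B(w,\overline x)$ that avoids both the forward reference to $\widetilde B$ and the uniqueness argument; the paper's version buys the a priori existence of a reproducing kernel from boundedness of point evaluation, which is then matched against the explicit formula. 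Both are sound.
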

\begin{proof}
 For a fixed $0<\delta<a$ let $f^\delta(x)$ be the restriction of $f\in A^2(S_a, \mathbf C^{(m)})$
to $\{x\in S_a:|x_0|<a-\delta\}$. By the subharmonicity of $|f|^2$, we have
\begin{align}\label{subhar1}
\begin{split}
|f(x_0+\ulx)|^2 &\leq \frac{1}{V_\delta}\int_{|y-x|<\frac{\delta}{2}} |f(y_0+\uly)|^2 d\uly dy_0 \\
& \leq \frac{1}{V_\delta}\int_{|y_0|<a-\frac{\delta}{2}}\int_{|\uly-\ulx|<\frac{\delta}{2}}|f(y_0+\uly)|^2  d\uly dy_0\\
& \leq \frac{1}{V_\delta}\int_{-a}^a\int_{|\uly-\ulx|<\frac{\delta}{2}}|f(y_0+\uly)|^2 d\uly dy_0 ,
\end{split}
\end{align}
where $V_\delta=C\delta^{m+1}$ is the volume of the ball $\{y\in \mathbf R^m; |y-x|<\frac{\delta}{2}\}.$
Then, by Fubini's theorem, we have
\begin{align}\label{subhar2}
\begin{split}
\int_{\mathbf R^m}|f(x_0+\ulx)|^2 d\ulx &\leq \frac{1}{V_\delta}\int_{-a}^a \int_{\mathbf R^m}\chi_{|\uly-\ulx|<\frac{\delta}{2}}(\ulx)d\ulx |f(y_0+\uly)|^2 dy_0 d\uly\\
&\leq \frac{C^\prime}{\delta} \int_{-a}^a \int_{\mathbf R^m} |f(y_0+\uly)|^2  d\uly dy_0.
\end{split}
\end{align}
Thus $f^\delta(x)\in H^2(S_{(a-\delta)}, \mathbf C^{(m)}).$ Hence, by Theorem \ref{PW-strips},
there exists $g_\delta$ such that
\begin{align*}
f^\delta(x)=\frac{1}{(2\pi)^m}\int_{\mathbf R^m} e(x,\ulxi)
 g_\delta(\ulxi) d\ulxi,
\end{align*}
where $g_\delta(\ulxi)=\mathcal F(f)(\ulxi)$ satisfies $e^{(a-\delta)|\ulxi|}g_\delta(\ulxi)\in L^2(\mathbf R^m, \mathbf C^{(m)}).$
For any $|x_0|<a,$ we let $\delta = \frac{a-|x_0|}{2}.$ Then, by the above discussion, we have
\begin{align*}
f(x_0+\ulx)=\frac{1}{(2\pi)^m}\int_{\mathbf R^m} e(x,\ulxi)
\mathcal F(f)(\ulxi) d\ulxi,
\end{align*}
and $g(\ulxi)=\mathcal F(f)(\ulxi).$
Furthermore, by Plancherel's theorem, we have
\begin{align}\label{Bergman-2}
\frac{1}{(2\pi)^m}\int_{\mathbf R^m}
 |(e^{-x_0|\ulxi|}\chi_+(\ulxi)g(\ulxi)
 +e^{x_0|\ulxi|}\chi_-(\ulxi)g(\ulxi)|^2 d\ulxi
  = \int_{\mathbf R^m}|f(x_0+\ulx)|^2 d\ulx.
\end{align}
Then by using Fubini's Theorem, we have
\begin{align*}
\frac{1}{(2\pi)^m}\int_{\mathbf R^m}\int_{-a}^a (e^{-2x_0|\ulxi|}|\chi_+(\ulxi)g(\ulxi)|^2+e^{2x_0|\ulxi|}|\chi_-(\ulxi)g(\ulxi)|^2) dx_0 d\ulxi = \int_{-a}^a\int_{\mathbf R^m} |f(x_0+\ulx)|^2 d\ulx dx_0,
\end{align*}
which gives
\begin{align*}
\frac{1}{(2\pi)^m}\int_{\mathbf R^m}(e^{2a|\ulxi|}-e^{-2a|\ulxi|})\frac{|g(\ulxi)|^2}{2|\ulxi|}d\ulxi =\int_{-a}^a\int_{\mathbf R^m} |f(x_0+\ulx)|^2 d\ulx dx_0<\infty.
\end{align*}
 Conversely, if there holds
 \begin{align*}
 f(x) = \frac{1}{(2\pi)^m}\int_{\mathbf R^m} e(x,\ulxi)g(\ulxi)d\ulxi,\quad x\in S_a,
 \end{align*}
 where $g(\ulxi)=\mathcal F(f)(\ulxi)\in L^2(\mathbf R^m,\mathbf C^{(m)})$ such that \begin{align*}
 \frac{1}{(2\pi)^m}\int_{\mathbf R^m}(e^{2a|\ulxi|}-e^{-2a|\ulxi|})\frac{|g(\ulxi)|^2}{2|\ulxi|}d\ulxi<\infty,
 \end{align*} then we can conclude that $f\in A^2(S_a,\mathbf C^{(m)})$ by the above discussion.

In the following we will show (\ref{Bergman-kernel-Sa}).
First, we show that the point-evaluation functional $T_x$ is a linear bounded functional. In fact, (\ref{subhar1}) implies
\begin{align*}
|T_x(f)|=|f(x)| \leq C_x ||f||_{A^2(S_a,\mathbf C^{(m)})}.
\end{align*}
By the Riesz representation theorem, there exists a reproducing kernel function $B(w,\overline x)\in A^2(S_a,\mathbf C^{(m)})$
as a function with respect to $w.$ Then we have
\begin{align}\label{PW-berg-ineq2}
\begin{split}
f(x)&=\frac{1}{(2\pi)^m}\int_{\mathbf R^m} e(x,\ulxi) \mathcal F(f)(\ulxi)d\ulxi \\
&=\int_{-a}^a \int_{\mathbf R^m} \overline B(w,\overline x)f(w)d\ulw dw_0 \\
&= \frac{1}{(2\pi)^m}\int_{-a}^a \int_{\mathbf R^m}\overline {\mathcal F(B)}(\ulxi,\overline x)  (e^{-2w_0|\ulxi|}\chi_+(\ulxi)+e^{2w_0|\ulxi|}\chi_{-}(\ulxi)){\mathcal F(f)}(\ulxi) d\ulxi dw_0,
\end{split}
\end{align}
where we have used (\ref{Plancherel}) and the fact that $\mathcal F(f(w_0+\cdot))(\ulxi)=(e^{-w_0|\ulxi|}\chi_+(\ulxi)+e^{w_0|\ulxi|}\chi_-(\ulxi))\mathcal F(f)(\ulxi),$ and $\mathcal F(f)(\ulxi)$ is the Fourier transform of the restriction of $f$ to $\mathbf R^m.$

 Applying (\ref{PW-berg-ineq2}) to $\widetilde B(x,\overline y)$ (see \textbf{Remark 3} for its definition) and using the uniqueness of the Fourier transform, we can show that
\begin{align}\label{bergman-remark}
\int_{-a}^a\overline{\mathcal F(B)}(\ulxi,\overline x)  (e^{-2w_0|\ulxi|}\chi_+(\ulxi)+e^{2w_0|\ulxi|}\chi_-(\ulxi))dw_0= e(x,\ulxi).
\end{align}
Thus
\begin{align*}
\mathcal F(B)(\ulxi,\overline x)= \frac{2|\ulxi|}{e^{2a|\ulxi|}-e^{-2a|\ulxi|}} e(\overline x,\ulxi),
\end{align*}
and then
\begin{align*}
B(w,\overline x)=\frac{1}{(2\pi)^m}\int_{\mathbf R^m}e(w+\overline x,\ulxi) \frac{2|\ulxi|}{e^{2a|\ulxi|}-e^{-2a|\ulxi|}} d\ulxi.
\end{align*}

\end{proof}

\begin{remark} {\bf 3}\label{3} Combining the arguments used in Theorem \ref{PW-Bergman} and in Lemma \ref{PW-Upper-Hardy},we can prove
\begin{thm}\label{PW-Upper-Bergman}
	$f\in A^2(\mathbf R^{m+1}_{\pm,\mp a},\mathbf C^{(m)})$ if and only if
	\begin{align*}
	f(x) = \frac{1}{(2\pi)^m}\int_{\mathbf R^m} e^{\pm}(x,\ulxi)g(\ulxi)d\ulxi, \quad x\in \mathbf R^{m+1}_{\pm,\mp a},
	\end{align*}
	where $g(\ulxi)=\mathcal F(f)(\ulxi)\in L^2(\mathbf R^m, \mathbf C^{(m)})$ such that
	\begin{align*}
	\frac{1}{(2\pi)^m}\int_{\mathbf R^m} e^{2a|\ulxi|} \frac{|\chi_\pm (\ulxi)g(\ulxi)|^2}{2|\ulxi|}d\ulxi<\infty.
	\end{align*}
\end{thm}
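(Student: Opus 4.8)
The plan is to prove the ``$+$'' statement, i.e.\ the characterization of $A^2(\mathbf R^{m+1}_{+,-a},\mathbf C^{(m)})$ on the half-space $\{x_0>-a\}$; the ``$-$'' statement then follows by running the identical argument with $e^+$, $\chi_+$, $\mathbf R^{m+1}_{+,-a}$ and $\int_{-a}^{\infty}e^{-2x_0|\ulxi|}\,dx_0$ replaced throughout by $e^-$, $\chi_-$, $\mathbf R^{m+1}_{-,a}$ and $\int_{-\infty}^{a}e^{2x_0|\ulxi|}\,dx_0$, using the $H^2(\mathbf R^{m+1}_{-,a},\mathbf C^{(m)})$-analogue of Lemma~\ref{PW-Upper-Hardy}. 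The whole argument is the half-space version of the proof of Theorem~\ref{PW-Bergman}: where that proof restricts a Bergman function on the strip $S_a$ to the thinner strip $S_{a-\delta}$ and invokes the strip Paley--Wiener theorem, we restrict a Bergman function on $\{x_0>-a\}$ to $\{x_0>-a+\delta\}$ and invoke the half-space Paley--Wiener theorem, Lemma~\ref{PW-Upper-Hardy}.

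\emph{Necessity.} Let $f\in A^2(\mathbf R^{m+1}_{+,-a},\mathbf C^{(m)})$ and fix $\delta\in(0,a)$. For $x_0>-a+\delta$ the ball $B(x,\delta/2)$ lies in $\{x_0>-a\}$, so the subharmonicity of $|f|^2$ gives the mean-value estimate, and averaging it over $\ulx\in\mathbf R^m$ and applying Fubini exactly as in (\ref{subhar1})--(\ref{subhar2}) yields $\int_{\mathbf R^m}|f(x_0+\ulx)|^2\,d\ulx\le \frac{C}{\delta}\,\|f\|^2_{A^2(\mathbf R^{m+1}_{+,-a},\mathbf C^{(m)})}$, uniformly for $x_0>-a+\delta$; hence the restriction of $f$ to $\{x_0>-a+\delta\}$ lies in $H^2(\mathbf R^{m+1}_{+,-(a-\delta)},\mathbf C^{(m)})$. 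By Lemma~\ref{PW-Upper-Hardy} there is $g=\mathcal F(f)$ (the Fourier transform of the restriction of $f$ to $\mathbf R^m$, hence independent of $\delta$, and satisfying $g=\chi_+g$) with $e^{(a-\delta)|\ulxi|}g\in L^2$ and $f(x)=\frac{1}{(2\pi)^m}\int_{\mathbf R^m}e^+(x,\ulxi)g(\ulxi)\,d\ulxi$ for $x_0>-(a-\delta)$; letting $\delta\downarrow0$ gives this representation for all $x_0>-a$. By Plancherel's theorem (\ref{Plancherel}) one has $\int_{\mathbf R^m}|f(x_0+\ulx)|^2\,d\ulx=\frac{1}{(2\pi)^m}\int_{\mathbf R^m}e^{-2x_0|\ulxi|}\,|\chi_+(\ulxi)g(\ulxi)|^2\,d\ulxi$ for every $x_0>-a$, and integrating this over $x_0\in(-a,\infty)$, interchanging the order of integration by Tonelli's theorem and using $\int_{-a}^{\infty}e^{-2x_0|\ulxi|}\,dx_0=\frac{e^{2a|\ulxi|}}{2|\ulxi|}$ gives $\|f\|^2_{A^2(\mathbf R^{m+1}_{+,-a},\mathbf C^{(m)})}=\frac{1}{(2\pi)^m}\int_{\mathbf R^m}e^{2a|\ulxi|}\frac{|\chi_+(\ulxi)g(\ulxi)|^2}{2|\ulxi|}\,d\ulxi$, which is finite; this is the stated spectral condition (and shows the correspondence $f\leftrightarrow g$ is, up to the constant $(2\pi)^{-m}$, an isometry).

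\emph{Sufficiency.} Conversely, suppose $g\in L^2(\mathbf R^m,\mathbf C^{(m)})$ with $\frac{1}{(2\pi)^m}\int_{\mathbf R^m}e^{2a|\ulxi|}\frac{|\chi_+(\ulxi)g(\ulxi)|^2}{2|\ulxi|}\,d\ulxi<\infty$, and let $f(x)=\frac{1}{(2\pi)^m}\int_{\mathbf R^m}e^+(x,\ulxi)g(\ulxi)\,d\ulxi$. For $x_0>-a$ put $\epsilon=x_0+a>0$; from the elementary bound $\sup_{t>0}2t\,e^{-2\epsilon t}=\tfrac{1}{\epsilon e}$ we get $e^{-2x_0|\ulxi|}=e^{2a|\ulxi|}e^{-2\epsilon|\ulxi|}\le \tfrac{1}{\epsilon e}\,e^{2a|\ulxi|}\tfrac{1}{2|\ulxi|}$, so writing $e^{-x_0|\ulxi|}=\big(e^{a|\ulxi|}/\sqrt{2|\ulxi|}\big)\big(\sqrt{2|\ulxi|}\,e^{-\epsilon|\ulxi|}\big)$ and using Cauchy--Schwarz together with $\int_{\mathbf R^m}2|\ulxi|e^{-2\epsilon|\ulxi|}\,d\ulxi<\infty$ shows the defining integral converges absolutely and locally uniformly on $\{x_0>-a\}$. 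Since each $e^+(\cdot,\ulxi)$ is left-monogenic on $\mathbf R^{m+1}$ (see \cite{Li-Mc-Qian}) and, by the same local bounds, differentiation may be carried under the integral sign, $f$ is left-monogenic on $\{x_0>-a\}$. The displayed bound $e^{-2x_0|\ulxi|}|\chi_+(\ulxi)g(\ulxi)|^2\le \tfrac{1}{\epsilon e}\,e^{2a|\ulxi|}\tfrac{|\chi_+(\ulxi)g(\ulxi)|^2}{2|\ulxi|}$ also shows $f(x_0+\cdot)\in L^2(\mathbf R^m,\mathbf C^{(m)})$ with $\mathcal F(f(x_0+\cdot))=e^{-x_0|\ulxi|}\chi_+(\ulxi)g(\ulxi)$; reversing the Plancherel--Tonelli computation of the previous paragraph then gives $\|f\|^2_{A^2(\mathbf R^{m+1}_{+,-a},\mathbf C^{(m)})}=\frac{1}{(2\pi)^m}\int_{\mathbf R^m}e^{2a|\ulxi|}\frac{|\chi_+(\ulxi)g(\ulxi)|^2}{2|\ulxi|}\,d\ulxi<\infty$, and taking $x_0=0$ shows $\mathcal F(f)=\chi_+g=g$ on $\mathbf R^m$. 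Hence $f\in A^2(\mathbf R^{m+1}_{+,-a},\mathbf C^{(m)})$, completing the proof.

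\emph{Where the work is.} None of the steps is deep; the care lies in two places. First, the $\delta$-shrinking step, which converts a Bergman function on the open half-space into a genuine Hardy function on each slightly smaller half-space so that Lemma~\ref{PW-Upper-Hardy} applies, and then checks that the spectral data so produced do not depend on $\delta$. Second, and more characteristic of this setting, the low-frequency behaviour: the weight $\frac{1}{2|\ulxi|}$ appearing here --- which is exactly $\int_{-a}^{\infty}e^{-2x_0|\ulxi|}\,dx_0$, and which is \emph{unbounded} as $|\ulxi|\to0$, unlike the bounded strip weight $\frac{2|\ulxi|}{e^{2a|\ulxi|}-e^{-2a|\ulxi|}}$ of Theorem~\ref{PW-Bergman} --- must be controlled both in the Tonelli interchange and in the absolute-convergence estimate for $f$; the factorization $e^{-x_0|\ulxi|}=\big(e^{a|\ulxi|}/\sqrt{2|\ulxi|}\big)\big(\sqrt{2|\ulxi|}\,e^{-(x_0+a)|\ulxi|}\big)$ and the bound $\sup_{t>0}2t\,e^{-2\epsilon t}<\infty$ are precisely what make this control work.
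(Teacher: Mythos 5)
Your proof is correct and follows exactly the route the paper intends: the paper proves this theorem only by the one-line remark that it follows from ``combining the arguments used in Theorem~\ref{PW-Bergman} and in Lemma~\ref{PW-Upper-Hardy},'' and your $\delta$-shrinking step, the application of Lemma~\ref{PW-Upper-Hardy}, and the Plancherel--Tonelli computation with $\int_{-a}^{\infty}e^{-2x_0|\ulxi|}\,dx_0=\frac{e^{2a|\ulxi|}}{2|\ulxi|}$ are precisely that combination, written out in full. The only cosmetic point is that in the sufficiency direction the identity $\mathcal F(f)=\chi_+g$ equals $g$ only under the (harmless, and implicit in the theorem's phrasing) normalization $g=\chi_+g$.
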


By Theorem \ref{PW-Bergman} and Theorem \ref{PW-Upper-Bergman}, we have
\begin{align*}
A^2(\mathbf R^{m+1}_{+,- a},\mathbf C^{(m)}) \oplus A^2(\mathbf R^{m+1}_{-, a},\mathbf C^{(m)})\subset A^2(S_a,\mathbf C^{(m)}).
\end{align*}

Moreover, the Bergman kernels of $A^2(\mathbf R^{m+1}_{\pm,\mp a},\mathbf C^{(m)})$ are, respectively, given by
\begin{align*}
B_{\pm,\mp a}(w,\overline x) &= \frac{1}{(2\pi)^m}\int_{\mathbf R^m}2|\ulxi|e^{-2a|\ulxi|}e^{\pm}(w+\overline x,\ulxi)d\ulxi\\
&=2\frac{\partial}{\partial x_0} \frac{1}{(2\pi)^m}\int_{\mathbf R^m} e^{-2a|\ulxi|}e^\pm(w+\overline x,\ulxi)d\ulxi\\
&=\mp 2\frac{\partial}{\partial x_0} S_{\pm,\mp a}(w,\overline x),
\end{align*}
where $S_{+,-a}(w,\overline x)$ and $S_{-,a}(w,\overline x)$ are the Szeg\"o kernels given in the previous section. Then we can define $A^2(S_a,\mathbf C^{(m)})\ni \widetilde B(w,\overline x)= B_{+,-a}(w,\overline x) + B_{-,a}(w,\overline x).$


\end{remark}

\begin{remark} {\bf 4}\label{4} Unlike the Hardy space case, we can only give a necessary condition for functions in $A^p(S_a,\mathbf C^{(m)}),1\leq p<2$.
\begin{thm}
If $f\in A^p(S_a,\mathbf C^{(m)}),1\leq p\leq 2,$ then there exists a function $g$ such that
\begin{align*}
f(x)=\frac{1}{(2\pi)^m}\int_{\mathbf R^m} e(x,\ulxi)g(\ulxi)d\ulxi,
\end{align*}
where $g(\ulxi)=\mathcal F(f)(\ulxi)\in L^q(\mathbf R^m,\mathbf C^{(m)}),$ satisfies for $1<p\leq 2,q=\frac{p}{p-1},$
\begin{align}\label{PW-berg-nec1}
\left(\frac{1}{2}\int_{\mathbf R^m} (e^{pa|\ulxi|}-e^{-pa|\ulxi|})^\frac{q}{p}\frac{|\chi_+(\ulxi)g(\ulxi)|^q+|\chi_-(\ulxi)g(\ulxi)|^q}{(p|\ulxi|)^\frac{q}{p}}d\ulxi\right)^\frac{1}{q}\leq C_p||f||_{A^p(S_a,\mathbf C^{(m)})};
\end{align}
and for $p=1,q=\infty,$
\begin{align}\label{PW-berg-nec2}
\frac{1}{2}\sup_{\ulxi\in \mathbf R^m}(e^{a|\ulxi|}-e^{-a|\ulxi|})\frac{|\chi_+(\ulxi)g(\ulxi)|+|\chi_-(\ulxi)g(\ulxi)|}{|\ulxi|}\leq C_1||f||_{A^1(S_a,\mathbf C^{(m)})}.
\end{align}
\end{thm}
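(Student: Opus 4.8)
The plan is to bootstrap from the Hardy-space Paley--Wiener theorem on strips. First I would localize: fix $0<\delta<a$ and let $f^\delta$ be the restriction of $f$ to $S_{a-\delta}$. Exactly as in the opening of the proof of Theorem~\ref{PW-Bergman} --- the estimates (\ref{subhar1})--(\ref{subhar2}), now run with the subharmonicity of $|f|^p$ in place of $|f|^2$ --- one gets $\sup_{|x_0|<a-\delta}\int_{\mathbf R^m}|f(x_0+\ulx)|^p\,d\ulx\le \tfrac{C'}{\delta}\|f\|_{A^p(S_a,\mathbf C^{(m)})}^p<\infty$, so that $f^\delta\in H^p(S_{a-\delta},\mathbf C^{(m)})$. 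The converse half of Theorem~\ref{PW-strips-g} then produces a single measurable $g(\ulxi)=\mathcal F(f)(\ulxi)$ --- the Fourier transform of the trace of $f$ on $\mathbf R^m$, the same for every $\delta$ --- with $e^{(a-\delta)|\ulxi|}g(\ulxi)\in L^q(\mathbf R^m,\mathbf C^{(m)})$ and $f(x)=\frac{1}{(2\pi)^m}\int_{\mathbf R^m}e(x,\ulxi)g(\ulxi)\,d\ulxi$ for $|x_0|<a-\delta$; since $\delta\in(0,a)$ is arbitrary this yields $g\in L^q(\mathbf R^m,\mathbf C^{(m)})$ and the claimed representation on all of $S_a$. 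For each fixed $x_0$ with $|x_0|<a$ the integrand is absolutely integrable (H\"older, using $e^{-(a-\delta-|x_0|)|\ulxi|}\in L^p$ for a suitable $\delta$), so $\mathcal F(f(x_0+\cdot))(\ulxi)=\bigl(e^{-x_0|\ulxi|}\chi_+(\ulxi)+e^{x_0|\ulxi|}\chi_-(\ulxi)\bigr)g(\ulxi)$ is an honest $L^1$ function of $\ulxi$, and the Hausdorff--Young inequality applies slice-wise: $\|\mathcal F(f(x_0+\cdot))\|_{L^q(\mathbf R^m)}\le C\,\|f(x_0+\cdot)\|_{L^p(\mathbf R^m)}$.

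Next I would convert these slice-wise bounds into the stated weighted estimate. The projection identities (\ref{important}) give the pointwise orthogonal split $|(e^{-x_0|\ulxi|}\chi_++e^{x_0|\ulxi|}\chi_-)g|^2=e^{-2x_0|\ulxi|}|\chi_+g|^2+e^{2x_0|\ulxi|}|\chi_-g|^2$. For $1<p\le2$ and $q=\tfrac{p}{p-1}$, I would start from $\|f\|_{A^p(S_a,\mathbf C^{(m)})}^p=\int_{-a}^a\|f(x_0+\cdot)\|_{L^p}^p\,dx_0\ge C^{-p}\int_{-a}^a\|\mathcal F(f(x_0+\cdot))\|_{L^q}^p\,dx_0$, insert the split, and then chain: (a) the superadditivity $(A+B)^s\ge A^s+B^s$ with $s=q/2\ge1$ to decouple $\chi_+$ from $\chi_-$ under the $q$-th power; (b) Minkowski's integral inequality, to interchange the norms $\|\cdot\|_{L^p((-a,a),\,dx_0)}$ and $\|\cdot\|_{L^q(\mathbf R^m,\,d\ulxi)}$ --- legitimate precisely because $p\le q$, and the mechanism by which the exponent $q/p$ appears on the weight; (c) the evaluation $\int_{-a}^a e^{\mp px_0|\ulxi|}\,dx_0=\frac{e^{pa|\ulxi|}-e^{-pa|\ulxi|}}{p|\ulxi|}$ together with the routine bounds $(A+B)^{p/q}\ge\tfrac12(A^{p/q}+B^{p/q})$ and $(A+B)^{q/p}\ge A^{q/p}+B^{q/p}$. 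This lands on (\ref{PW-berg-nec1}), the numerical factor $\tfrac12$ being absorbed into $C_p$. For the endpoint $p=1$, $q=\infty$, Hausdorff--Young degenerates to $\|\mathcal F(f(x_0+\cdot))\|_{L^\infty}\le\|f(x_0+\cdot)\|_{L^1}$; the orthogonal split then forces $e^{-x_0|\ulxi|}|\chi_+g(\ulxi)|\le\|f(x_0+\cdot)\|_{L^1}$ and $e^{x_0|\ulxi|}|\chi_-g(\ulxi)|\le\|f(x_0+\cdot)\|_{L^1}$ for a.e.\ $\ulxi$; integrating each over $x_0\in(-a,a)$, bounding the right side by $\|f\|_{A^1(S_a,\mathbf C^{(m)})}$, and adding the two inequalities gives (\ref{PW-berg-nec2}).

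I expect step (b) --- the Minkowski interchange of the $x_0$- and $\ulxi$-integrations --- to be the conceptual crux and the only place where $p\le q$ is genuinely used; it is exactly what forces the weight $e^{pa|\ulxi|}-e^{-pa|\ulxi|}$ to be raised to the power $q/p$. The remaining ingredients are standard: the subharmonic localization (already carried out for $p=2$ inside the proof of Theorem~\ref{PW-Bergman}), the citation of Theorem~\ref{PW-strips-g}, the Hausdorff--Young inequality, and elementary manipulations with $(A+B)^s\gtrless A^s+B^s$. One subsidiary point that should be made explicit is the Fubini/measurability justification for integrating the slice-wise a.e.\ bounds against $dx_0$; it is immediate once one notes that $(x_0,\ulxi)\mapsto\mathcal F(f(x_0+\cdot))(\ulxi)=\bigl(e^{-x_0|\ulxi|}\chi_+(\ulxi)+e^{x_0|\ulxi|}\chi_-(\ulxi)\bigr)g(\ulxi)$ is jointly measurable on $(-a,a)\times\mathbf R^m$, so the exceptional set is null in the product.
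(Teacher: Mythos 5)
Your proposal is correct and follows essentially the same route as the paper's own proof: subharmonic localization to get $f^\delta\in H^p(S_{a-\delta},\mathbf C^{(m)})$, invocation of Theorem \ref{PW-strips-g} for the representation, slice-wise Hausdorff--Young, the orthogonal split via the projection identities, and Minkowski's integral inequality (with exponent $q/p\ge 1$) to interchange the $x_0$- and $\ulxi$-integrations before evaluating $\int_{-a}^{a}e^{\mp px_0|\ulxi|}\,dx_0$. Your identification of the Minkowski interchange as the source of the exponent $q/p$ on the weight matches the paper exactly.
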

\begin{proof}
The proof is similar to that of Theorem \ref{PW-Bergman}. For $1\leq p<2,$ $|f|^p$ is subharmonic, that makes the argument in the proof of Theorem \ref{PW-Bergman} applicable to the present case. In fact, we let $f\in A^p(S_a,\mathbf C^{(m)}),$ and $f^\delta $ the restriction of $f$ to $\{x\in S_a: |x_0|<a-\delta\}.$ By the subharmonicity of $|f|^p$ and the argument used Theorem \ref{PW-Bergman}, we have $f^\delta\in H^p(S_{(a-\delta)},\mathbf C^{(m)}).$
Then, by Theorem \ref{PW-strips-g}, we have
\begin{align*}
f^\delta(x) &= \frac{1}{(2\pi)^m}\int_{\mathbf R^m} e(x,\ulxi) \mathcal F(f)(\ulxi)d\ulxi= \frac{1}{(2\pi)^m}\int_{\mathbf R^m} e(x,\ulxi) g(\ulxi)d\ulxi.
\end{align*}
and
\begin{align*}
\mathcal F(f^\delta(x_0+\cdot))(\ulxi) = (e^{-x_0|\ulxi|}\chi_+(\ulxi)+e^{x_0|\ulxi|}\chi_-(\ulxi))\mathcal F(f)(\ulxi)=(e^{-x_0|\ulxi|}\chi_+(\ulxi)+e^{x_0|\ulxi|}\chi_-(\ulxi))g(\ulxi).
\end{align*}
Since the above equalities holds for all $0<\delta<a,$ we have
\begin{align*}
f(x) &= \frac{1}{(2\pi)^m}\int_{\mathbf R^m} e(x,\ulxi) \mathcal F(f)(\ulxi)d\ulxi= \frac{1}{(2\pi)^m}\int_{\mathbf R^m} e(x,\ulxi) g(\ulxi)d\ulxi.
\end{align*}
and
\begin{align*}
\mathcal F(f(x_0+\cdot))(\ulxi) = (e^{-x_0|\ulxi|}\chi_+(\ulxi)+e^{x_0|\ulxi|}\chi_-(\ulxi))\mathcal F(f)(\ulxi)=(e^{-x_0|\ulxi|}\chi_+(\ulxi)+e^{x_0|\ulxi|}\chi_-(\ulxi))g(\ulxi).
\end{align*}


Next we will prove (\ref{PW-berg-nec1}) and (\ref{PW-berg-nec2}).
We first consider the case $1<p<2,q=\frac{p}{p-1}.$
By Hausdorff-Young's inequality, there holds
\begin{align*}
\left(\int_{\mathbf R^m} |e^{-x_0|\ulxi|}\chi_+(\ulxi)g(\ulxi)+e^{x_0|\ulxi|}\chi_-(\ulxi)g(\ulxi)|^q d\ulxi \right)^\frac{1}{q} \leq C_p\left(\int_{\mathbf R^m}|f(x_0+\ulx)|^p d\ulx\right)^\frac{1}{p}.
\end{align*}

Consequently, using the fact $\chi_+\chi_-=\chi_-\chi_+=0,$ we have
\begin{align*}
\left(\int_{\mathbf R^m} |e^{-x_0|\ulxi|}\chi_+(\ulxi)g(\ulxi)|^q d\ulxi\right)^\frac{1}{q}\leq C_p\left(\int_{\mathbf R^m}|f(x_0+\ulx)|^p d\ulx\right)^\frac{1}{p}
\end{align*}
and
\begin{align*}
\left(\int_{\mathbf R^m} |e^{x_0|\ulxi|}\chi_-(\ulxi)g(\ulxi)|^q d\ulxi\right)^\frac{1}{q}\leq C_p \left(\int_{\mathbf R^m}|f(x_0+\ulx)|^p d\ulx\right)^\frac{1}{p}.
\end{align*}
Then by Minkowski's inequality,
\begin{align*}
\left(\int_{\mathbf R^m} \left(\int_{-a}^a|\chi_+(\ulxi)g(\ulxi)|^pe^{-px_0|\ulxi|}dx_0\right)^\frac{q}{p}d\ulxi\right)^\frac{p}{q} &\leq \int_{-a}^a\left(\int_{\mathbf R^m}|\chi_+(\ulxi)g(\ulxi)|^qe^{-qx_0|\ulxi|}d\ulxi\right)^\frac{p}{q} dx_0 \\
&\leq C_p^p\int_{-a}^a \int_{\mathbf R^m} |f(x_0+\ulx)|^p d\ulx dx_0,
\end{align*}
which gives
\begin{align*}
\left(\int_{\mathbf R^m} (e^{pa|\ulxi|}-e^{-pa|\ulxi|})^\frac{q}{p}
\frac{|\chi_+(\ulxi)g(\ulxi)|^q}{(p|\ulxi|)^\frac{q}{p}}d\ulxi \right)^\frac{1}{q}\leq C_p||f||_{A^p(S_a,\mathbf C^{(m)})}.
\end{align*}
Similarly, there holds
\begin{align*}
\left(\int_{\mathbf R^m} (e^{pa|\ulxi|}-e^{-pa|\ulxi|})^\frac{q}{p}\frac{|\chi_-(\ulxi)g(\ulxi)|^q}{(p|\ulxi|)^\frac{q}{p}}d\ulxi \right)^\frac{1}{q}\leq C_p||f||_{A^p(S_a,\mathbf C^{(m)})}.
\end{align*}
Therefore, for $1<p<2,q=\frac{p}{p-1}$
\begin{align*}
\left(\frac{1}{2}\int_{\mathbf R^m} (e^{pa|\ulxi|}-e^{-pa|\ulxi|})^\frac{q}{p}\frac{|\chi_+(\ulxi)g(\ulxi)|^q+|\chi_-(\ulxi)g(\ulxi)|^q}{(p|\ulxi|)^\frac{q}{p}}d\ulxi\right)^\frac{1}{q}\leq C_p||f||_{A^p(S_a,\mathbf C^{(m)})}.
\end{align*}
For the case $p=1,q=\infty,$ by the definition of the Fourier transform, there holds
\begin{align*}
\sup_{\ulxi\in\mathbf R^m}|e^{-x_0|\ulxi|}\chi_+(\ulxi)g(\ulxi)+e^{x_0|\ulxi|}\chi_-(\ulxi)g(\ulxi)| \leq C_1\int_{\mathbf R^m} |f(x_0+\ulx)| d\ulx,
\end{align*}
which gives
\begin{align*}
\sup_{\ulxi\in \mathbf R^m} |e^{-x_0|\ulxi|}\chi_+(\ulxi)g(\ulxi)|\leq C_1 \int_{\mathbf R^m}|f(x_0+\ulx)|d\ulx
\end{align*}
and
\begin{align*}
\sup_{\ulxi\in \mathbf R^m} |e^{x_0|\ulxi|}\chi_-(\ulxi)g(\ulxi)|\leq C_1 \int_{\mathbf R^m}|f(x_0+\ulx)|d\ulx.
\end{align*}
Consequently, taking integration to the both sides with respect to $x_0$, we have
\begin{align*}
\frac{1}{2}\sup_{\ulxi\in \mathbf R^m}(e^{a|\ulxi|}-e^{-a|\ulxi|})\frac{|\chi_+(\ulxi)g(\ulxi)|+
|\chi_-(\ulxi)g(\ulxi)|}{|\ulxi|}&\leq \int_{-a}^a \int_{\mathbf R^m}|f(x_0+\ulx)|d\ulx dx_0\\
&=C_1 ||f||_{A^1(S_a,\mathbf C^{(m)})}.
\end{align*}

\end{proof}
\end{remark}

Next we give some pointwise estimates of the Bergman kenel $B(w,\overline x).$
\begin{lem}\label{bergman-estimate}
For the Bergman kernel $B(w,\overline x),$ we have
\begin{align}\label{bergman-ineq1}
\frac{c}{(a-|x_0|)^{m+1}}\leq B(x,\overline x)\leq \frac{C}{(a-|x_0|)^{m+1}}
\end{align}
and for $|\ulw+\overline \ulx|\geq 1,$
\begin{align}\label{bergman-ineq22}
|B(w,\overline x)|\leq \frac{M}{|\ulw+\overline \ulx|^\frac{m-1}{2}(2a-|w_0+x_0|)^\frac{m+3}{2}}
\end{align}
for $m=2l+1,l=1,2,...,$ and
\begin{align}\label{bergman-ineq23}
|B(w,\overline x)|\leq \frac{M}{|\ulw+\overline \ulx|^\frac{m-1}{2}(2a-|w_0+x_0|)^\frac{m+4}{2}}
\end{align}
for $m=2l,l=1,2,...,$ where $M$ is a constant that is independent of $w$ and $x.$
\end{lem}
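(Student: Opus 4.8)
The statement splits into the two--sided diagonal bound (\ref{bergman-ineq1}), which comes from an explicit evaluation of $B(x,\overline x)$, and the off--diagonal bounds (\ref{bergman-ineq22})--(\ref{bergman-ineq23}), which are obtained by the Bessel--function technique already used in the proof of Lemma \ref{PW-infty}. For (\ref{bergman-ineq1}), set $w=x$ in (\ref{Bergman-kernel-Sa}); then $w+\overline x=2x_0$ is a real scalar, so $e(2x_0,\ulxi)=e^{-2x_0|\ulxi|}\chi_+(\ulxi)+e^{2x_0|\ulxi|}\chi_-(\ulxi)$, whose non--scalar part is an odd function of $\ulxi$ times a radial function and hence integrates to $0$ against the radial weight $\tfrac{2|\ulxi|}{e^{2a|\ulxi|}-e^{-2a|\ulxi|}}=\tfrac{|\ulxi|}{\sinh (2a|\ulxi|)}$, while its scalar part is $\cosh (2x_0|\ulxi|)$. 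Passing to polar coordinates one obtains the positive number
\[
B(x,\overline x)=\frac{|S^{m-1}|}{(2\pi)^m}\int_0^\infty\frac{\rho^{m}\cosh (2x_0\rho)}{\sinh (2a\rho)}\,d\rho .
\]
The lower bound is then immediate, since $\cosh (2x_0\rho)/\sinh (2a\rho)\ge e^{-2(a-|x_0|)\rho}$ for all $\rho>0$, so $B(x,\overline x)\ge \frac{|S^{m-1}|}{(2\pi)^m}\int_0^\infty\rho^m e^{-2(a-|x_0|)\rho}\,d\rho=c\,(a-|x_0|)^{-m-1}$. For the upper bound I would split the integral at $\rho=1$, using $\cosh (2x_0\rho)\le e^{2|x_0|\rho}$ throughout, $\sinh (2a\rho)\ge 2a\rho$ on $(0,1]$ (so that piece is bounded by a constant) and $\sinh (2a\rho)\ge c_a e^{2a\rho}$ on $[1,\infty)$ (so that piece is bounded by $\tfrac1{c_a}\int_0^\infty\rho^m e^{-2(a-|x_0|)\rho}\,d\rho=C(a-|x_0|)^{-m-1}$), and then absorb the constant via $(a-|x_0|)^{-m-1}\ge a^{-m-1}$.

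For (\ref{bergman-ineq22})--(\ref{bergman-ineq23}) write $w+\overline x=y_0+\uly$ with $y_0=w_0+x_0$ (so $|y_0|<2a$) and $\uly=\ulw+\overline\ulx$ (so $|\uly|\ge 1$), and decompose $B(w,\overline x)=B^++B^-$ according to $e=e^++e^-$, each $B^\pm$ further into a scalar part and a vector part via $\chi_\pm=\tfrac12\pm\tfrac{i}{2}\tfrac{\ulxi}{|\ulxi|}$. After a rotation $U\in\mathbf O(m)$ with $U\uly=|\uly|e_1$ and passage to polar coordinates, the angular integrals are exactly those in (\ref{I1})--(\ref{I2}): $\int_{S^{m-1}}e^{i\rho\langle\uly,\ulxi'\rangle}\,d\sigma(\ulxi')$ is a constant multiple of $(\rho|\uly|)^{-\frac{m-2}2}J_{\frac{m-2}2}(\rho|\uly|)$, and (handling the vector part crudely as the quantities $I_{21},I_{22}$ in the proof of Lemma \ref{PW-infty}) the vector part produces the terms $(\rho|\uly|)^{-\frac m2+1}J_{\frac m2}(\rho|\uly|)$ and $(\rho|\uly|)^{-\frac{m-1}2}J_{\frac{m-1}2}(\rho|\uly|)$. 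Thus each of $B^\pm$ is reduced to radial integrals
\[
\int_0^\infty\frac{\rho^{m}e^{\mp y_0\rho}}{\sinh (2a\rho)}(\rho|\uly|)^{-k}J_k(\rho|\uly|)\,d\rho ,\qquad k\in\{\tfrac{m-2}2,\tfrac{m-1}2,\tfrac m2\}.
\]

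Each such integral I would estimate exactly as in Lemma \ref{PW-infty}: integrate by parts in $\rho$ via (\ref{property-1}), written as $\rho^{k}J_{k-1}(\rho|\uly|)=|\uly|^{-1}\tfrac d{d\rho}(\rho^{k}J_{k}(\rho|\uly|))$, discarding the boundary terms (at $\infty$ by the exponential decay of the weight, at $0$ by the vanishing order of $\rho^{\bullet}J_{\bullet}(\rho|\uly|)$), which extracts a factor $|\uly|^{-1}$ and moves the $\rho$--derivative onto $\rho^{j}e^{\mp y_0\rho}/\sinh (2a\rho)$; then bound the surviving Bessel factor by $|J_k(t)|\le C\min\{t^{-1/2},t^{k}\}$ (from (\ref{property-2}) and the power series of $J_k$ at $0$, possibly after also splitting at $\rho|\uly|=1$ as for $I_{22}$); and finally estimate the remaining integral $\int_0^\infty\bigl|\tfrac d{d\rho}\bigl(\rho^{j}e^{\mp y_0\rho}/\sinh (2a\rho)\bigr)\bigr|\rho^{\ell}\,d\rho$ by splitting at $\rho=1$, using $\sinh (2a\rho)\ge 2a\rho$ near $0$ and, in the tail, $\sinh (2a\rho)\ge c_a e^{2a\rho}$ together with $e^{\mp y_0\rho}\le e^{|y_0|\rho}$ and the substitution $u=(2a-|y_0|)\rho$, which manufactures the factor $(2a-|y_0|)^{-p}$. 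The parity of $m$ enters precisely through the integrability near $\rho=0$ of the relevant powers $\rho^{\ell}$ left by the integration by parts — the same phenomenon that forces the separate treatment of $m=2$ in Lemma \ref{PW-infty} — and costs an extra half power, giving $p=\tfrac{m+3}2$ for $m$ odd and $p=\tfrac{m+4}2$ for $m$ even. Collecting the scalar and vector contributions of $B^\pm$ and using $|\uly|\ge 1$ together with $2a-|y_0|<2a$ to absorb surplus powers of $|\uly|^{-1}$ and stray constants into $(2a-|y_0|)^{-p}$ then yields (\ref{bergman-ineq22})--(\ref{bergman-ineq23}).

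The diagonal estimate is essentially a one--line computation once the non--scalar part is discarded; the real work, and the main obstacle, is the last step of the off--diagonal argument: controlling, uniformly in $w$ and $x$, the competition between the decay in $|\uly|$ produced by the integrations by parts and the Bessel asymptotics against the blow--up in $2a-|y_0|$ produced by the exponential tail of $1/\sinh (2a\rho)$, and checking that the borderline small--$\rho$ behaviour of the various integrands reproduces exactly the parity--dependent exponents $(m+3)/2$ and $(m+4)/2$.
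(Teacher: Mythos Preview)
Your treatment of the diagonal estimate (\ref{bergman-ineq1}) is essentially the paper's: the paper also discards the vector part via $\int_{S^{m-1}}\ulxi'\,d\sigma(\ulxi')=0$ and uses exactly your lower bound $\cosh(2x_0\rho)/\sinh(2a\rho)\ge e^{-2(a-|x_0|)\rho}$. For the upper bound the paper does \emph{not} split at $\rho=1$; instead it expands $\dfrac{1}{1-e^{-4a\rho}}=\sum_{k\ge 0}e^{-4ak\rho}$, integrates each term to get $\sum_{k\ge 0}\bigl((2k+1)a-|x_0|\bigr)^{-m-1}$, and bounds this series by $C(a-|x_0|)^{-m-1}$. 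Both arguments work; yours is slightly quicker.

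For the off-diagonal bounds the paper takes a noticeably simpler route than the one you sketch. After the same Bessel reduction producing $J_{(m-2)/2}$, $J_{(m-1)/2}$, $J_{m/2}$, the paper performs \emph{no} integration by parts in $\rho$ at all: it applies the global bound $|J_k(t)|\le Ct^{-1/2}$ directly, which already extracts the factor $|\ulw+\overline\ulx|^{-(m-1)/2}$ and leaves a radial integral $\int_0^\infty e^{-(2a-|y_0|)\rho}\rho^{(m+1)/2}(1-e^{-4a\rho})^{-1}\,d\rho$; then it again expands $(1-e^{-4a\rho})^{-1}$ as a geometric series and integrates term by term. This sidesteps entirely the small-$\rho$ integrability issue you flag as the main obstacle, since each term $e^{-(2a(2k+1)-|y_0|)\rho}\rho^{(m+1)/2}$ is perfectly smooth at $0$. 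The parity distinction in the paper does not arise from behaviour near $\rho=0$ but from its (rather crude) handling of $\int_0^\infty e^{-\alpha\rho}\rho^{1/2}\,d\rho$ in the even case, which costs the extra half-power. Your integration-by-parts scheme, imported from Lemma~\ref{PW-infty}, should also work, but it is more laborious here than necessary: in Lemma~\ref{PW-infty} the integral is over $[0,\pi]$ with no exponential weight, so integration by parts is the only way to gain decay in $|\ulx|$; in the present lemma the weight $e^{-2a\rho}$ already does the job.
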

\begin{proof}
We first consider (\ref{bergman-ineq1}). Note that
\begin{align*}
0<||B(\cdot,\overline x)||_{A^2(S_a,\mathbf C^{(m)})}^2 &=B(x,\overline x)\\
&=\frac{1}{(2\pi)^m}\int_{\mathbf R^m}e^{-2x_0|\ulxi|}\chi_+(\ulxi)\frac{2|\ulxi|}{e^{2a|\ulxi|}-e^{-2a|\ulxi|}} d\ulxi \\
& + \frac{1}{(2\pi)^m}\int_{\mathbf R^m}e^{2x_0|\ulxi|}\chi_-(\ulxi)\frac{2|\ulxi|}{e^{2a|\ulxi|}-e^{-2a|\ulxi|}} d\ulxi.
\end{align*}
Thus we must have
\begin{align}\label{berg-ineq3}
\begin{split}
B(x,\overline x)&=\frac{2}{(2\pi)^m}\left(\int_{\mathbf R^m} e^{-2x_0|\ulxi|} \chi_+(\ulxi)\frac{|\ulxi|}{e^{2a|\ulxi|}-e^{-2a|\ulxi|}} d\ulxi +\int_{\mathbf R^m}e^{2x_0|\ulxi|}\chi_-(\ulxi)\frac{|\ulxi|}{e^{2a|\ulxi|}-e^{-2a|\ulxi|}} d\ulxi\right)\\
&= \frac{2}{(2\pi)^m}\int_{S^{m-1}}\left(\int_{0}^\infty e^{-2(x_0+a)r} \frac{(1+\ulxi^\prime)r^m}{1-e^{-4ar}} drd\sigma(\ulxi^\prime) + \int_{0}^\infty e^{2(x_0-a)r} \frac{(1-\ulxi^\prime)r^m}{1-e^{-4ar}} dr \right)d\sigma(\ulxi^\prime)\\
&= \frac{2}{(2\pi)^m}\int_{S^{m-1}}d\sigma(\ulxi^\prime)\int_{0}^\infty \left( e^{-2(x_0+a)r} +e^{2(x_0-a)r}\right) \frac{r^m}{1-e^{-4ar}} dr\\
&\geq \frac{4\sigma_{m-1}}{(2\pi)^m}\int_0^\infty e^{-2(a-|x_0|)r}r^m dr\\
&= \frac{4m!\sigma_{m-1}}{(2\pi)^m} \frac{1}{(2(a-|x_0|))^{m+1}}\\
&\geq \frac{c}{(a-|x_0|)^{m+1}},
\end{split}
\end{align}
where we have used the fact $\int_{S^{m-1}}\ulxi^\prime d\sigma(\ulxi^\prime)=0.$

For the left-hand side of (\ref{bergman-ineq1}), by the third equality of (\ref{berg-ineq3}), we have
\begin{align*}
B(x,\overline x) &\leq \frac{8\sigma_{m-1}}{(2\pi)^m}\left(\int_0^\infty e^{-2(a-|x_0|)r}r^m\sum_{k=0}^\infty e^{-4akr}\right)\\
&=\frac{8\sigma_{m-1}}{(2\pi)^m}\sum_{k=0}^\infty\left(\int_0^\infty e^{-2(a-|x_0|)r}r^m e^{-4akr}\right)\\
&=\frac{8m!\sigma_{m-1}}{(2\pi)^m2^{m+1}}\sum_{k=0}^\infty \frac{1}{((2k+1)a-|x_0|)^{m+1}}\\
&\leq \frac{8m!\sigma_{m-1}}{(2\pi)^m2^{m+1}}\sum_{k=0}^\infty \frac{1}{(2ka+a-|x_0|)^{m+1}}\\
&\leq \frac{1}{(a-|x_0|)^{m+1}}\frac{4m!C_m}{(2\pi)^m2^{m+1}} \sum_{k=0}^\infty\frac{1}{(\frac{2ka}{a-|x_0|}+1)^{m+1}}\\
&\leq \frac{1}{(a-|x_0|)^{m+1}}\frac{4m!C_m}{(2\pi)^m2^{m+1}} \sum_{k=0}^\infty\frac{1}{(2k+1)^{m+1}}\\
&=\frac{C}{(a-|x_0|)^{m+1}}.
\end{align*}

In the following we will prove (\ref{bergman-ineq22}) and (\ref{bergman-ineq23}) by using the argument similar to Lemma \ref{PW-infty}. First we recall that
$\omega_\frac{m-2}{2}=\Gamma(\frac{m-1}{2})\Gamma(\frac{1}{2}),$ and $J_k(t)$ is the Bessel function given by $$J_k(t)=\frac{(\frac{t}{2})^k}{\omega_k}\int_{-1}^1 e^{its}(1-s^2)^{\frac{2k-1}{2}}ds,\quad k>-\frac{1}{2}.$$

Note that
\begin{align*}
B(w,\overline x)&=\frac{1}{(2\pi)^m}\int_{\mathbf R^m}\frac{|\ulxi|}{e^{2a|\ulxi|}-e^{-2a|\ulxi|}}\left(e^{-(w_0+x_0)|\ulxi|}(1+i\frac{\ulxi}{|\ulxi|})+e^{(w_0+x_0)|\ulxi|}(1-i\frac{\ulxi}{|\ulxi|})\right)e^{i\langle \ulw+\overline \ulx,\ulxi\rangle}d\ulxi\\
&=\frac{1}{(2\pi)^m}\int_{\mathbf R^m}\frac{|\ulxi|}{e^{2a|\ulxi|}-e^{-2a|\ulxi|}}\left(e^{-(w_0+x_0)|\ulxi|}+e^{(w_0+x_0)|\ulxi|}\right)e^{i\langle \ulw+\overline \ulx,\ulxi\rangle}d\ulxi\\
&+\frac{1}{(2\pi)^m}\int_{\mathbf R^m}\frac{|\ulxi|}{e^{2a|\ulxi|}-e^{-2a|\ulxi|}}\left(e^{-(w_0+x_0)|\ulxi|}-e^{(w_0+x_0)|\ulxi|}\right)i\frac{\ulxi}{|\ulxi|}e^{i\langle \ulw+\overline \ulx,\ulxi\rangle}d\ulxi\\
&=I_1+I_2.
\end{align*}

As in Lemma \ref{PW-infty}, we have
\begin{align*}
|I_1|&=\left|\frac{1}{(2\pi)^m}\int_0^\infty \int_{S^{m-1}}\frac{\left(e^{-(w_0+x_0)r}+e^{(w_0+x_0)r}\right)r^m}{e^{2ar}-e^{-2ar}}e^{i\langle \ulw+\overline{\ulx},\ulxi^\prime \rangle}d\sigma(\ulxi^\prime)dr\right|\\
&=\left|\frac{\omega_\frac{m-2}{2}}{(2\pi)^m}\int_0^\infty \frac{\left(e^{-(w_0+x_0)r}+e^{(w_0+x_0)r}\right)r^m}{e^{2ar}-e^{-2ar}}(r|\ulw+\overline{\ulx}|)^{-\frac{m-2}{2}}J_\frac{m-2}{2}(r|\ulw+\overline{\ulx}|)dr\right|\\
&\leq \frac{M_1}{|\ulw+\overline{\ulx}|^\frac{m-1}{2}}\int_0^\infty \frac{\left(e^{-(2a+w_0+x_0)r}+e^{-(2a-(w_0+x_0))r}\right)r^\frac{m+1}{2}}{1-e^{-4ar}}dr\\
&\leq \frac{2M_1}{|\ulw+\overline{\ulx}|^\frac{m-1}{2}}\int_0^\infty \frac{e^{-(2a-|w_0+x_0|)r}r^\frac{m+1}{2}}{1-e^{-4ar}}dr\\
&=\frac{2M_1}{|\ulw+\overline{\ulx}|^\frac{m-1}{2}}\sum_{k=0}^\infty\int_0^\infty e^{-(2a(2k+1)-|w_0+x_0|)r}r^\frac{m+1}{2} dr.
\end{align*}
When $m=2l+1,l\geq 1,$
\begin{align*}
|I_1|&\leq \frac{2M_1}{|\ulw+\overline \ulx|^\frac{m-1}{2}}\sum_{k=0}^\infty\frac{1}{(2a(2k+1)-|w_0+x_0|)^\frac{m+3}{2}}\\
&\leq \frac{2M_1}{|\ulw+\overline{\ulx}|^\frac{m-1}{2}(2a-|w_0+x_0|)^\frac{m+3}{2}}\sum_{k=0}^\infty\frac{1}{(2k+1)^\frac{m+3}{2}}\\
&\leq \frac{M_2}{|\ulw+\overline{\ulx}|^\frac{m-1}{2}(2a-|w_0+x_0|)^\frac{m+3}{2}}.
\end{align*}
When $m=2l,l\geq 1,$
\begin{align*}
|I_1|&\leq \frac{2M_1}{|\ulw+\overline{\ulx}|^\frac{m-1}{2}}\sum_{k=0}^\infty\frac{1}{(2a(2k+1)-|w_0+x_0|)^\frac{m}{2}}\int_0^\infty e^{-(2a(2k+1)-|w_0+x_0|)r}r^\frac{1}{2}dr\\
&\leq \frac{2M_1}{|\ulw+\overline{\ulx}|^\frac{m-1}{2}}\sum_{k=0}^\infty\left(\frac{1}{(2a(2k+1)-|w_0+x_0|)^\frac{m+2}{2}}+\frac{1}{(2a(2k+1)-|w_0+x_0|)^\frac{m+4}{2}}\right)\\
&\leq \frac{2M_1}{|\ulw+\overline{\ulx}|^\frac{m-1}{2}(2a-|w_0+x_0|)^\frac{m+4}{2}}\sum_{k=0}^\infty\left(\frac{2a-|w_0+x_0|}{(2k+1)^\frac{m+2}{2}}+\frac{1}{(2k+1)^\frac{m+4}{2}}\right)\\
&\leq \frac{M_2^\prime}{|\ulw+\overline{\ulx}|^\frac{m-1}{2}(2a-|w_0+x_0|)^\frac{m+4}{2}}.
\end{align*}
For $I_2$, we note that
\begin{align*}
I_2 &=\frac{i}{(2\pi)^m}\int_0^\infty \int_{S^{m-1}}\frac{\left(e^{-(w_0+x_0)r}+e^{(w_0+x_0)r}\right)r^m}{e^{2ar}-e^{-2ar}}e^{ir\langle \ulw+\overline\ulx,\ulxi^\prime\rangle} \ulxi^\prime d\sigma(\ulxi^\prime) dr.
\end{align*}
Using exactly the same argument as in Lemma \ref{PW-infty}, we introduce $I_{21}$ and $I_{22}$, where
\begin{align*}
I_{21}=\int_0^\pi e^{ir|\ulw+\overline \ulx|\cos\theta_1} \cos\theta_1 (\sin \theta_1)^{m-2}d\theta_1=\frac{i\omega_\frac{m}{2}}{m-1} (r|\ulw+\overline\ulx|)^{-\frac{m}{2}+1}J_{\frac{m}{2}}(r|\ulw+\overline\ulx|)
\end{align*}
and
\begin{align*}
I_{22}=\int_0^\pi e^{ir|\ulw+\overline\ulx|\cos\theta_1} \sin\theta_1 (\sin \theta_1)^{m-2}d\theta_1=\omega_\frac{m-1}{2}(r|\ulx|)^{-\frac{m-1}{2}}J_{\frac{m-1}{2}}(r|\ulw+\overline\ulx|).
\end{align*}
As in Lemma \ref{PW-infty} again, to estimate $I_2,$ it suffices to estimate
\begin{align}\label{berg-ineq4}
\left| \int_0^\infty\frac{\left(e^{-(w_0+x_0)r}+e^{(w_0+x_0)r}\right)r^m}{e^{2ar}-e^{-2ar}}(r|\ulw+\overline\ulx|)^{-\frac{m}{2}+1}J_{\frac{m}{2}}(r|\ulw+\overline\ulx|) dr\right|
\end{align}
and
\begin{align}\label{berg-ineq5}
\left| \int_0^\infty\frac{\left(e^{-(w_0+x_0)r}+e^{(w_0+x_0)r}\right)r^m}{e^{2ar}-e^{-2ar}}(r|\ulw+\overline\ulx|)^{-\frac{m-1}{2}}J_{\frac{m-1}{2}}(r|\ulw+\overline\ulx|) dr\right|.
\end{align}
For (\ref{berg-ineq4}), we have
\begin{align*}
(\ref{berg-ineq4})
&=\left|\sum_{k=0}^\infty\frac{1}{|\ulw+\overline \ulx|^\frac{m}{2}}\int_0^\infty (e^{-(2a(2k+1)+w_0+x_0)r}+e^{-(2a(2k+1)-(w_0+x_0))})r^{\frac{m}{2}+1}|\ulw+\overline\ulx|J_\frac{m}{2}(r|\ulw+\overline \ulx|)dr\right|\\
&\leq \frac{M_3}{|\ulw+\overline \ulx|^\frac{m-1}{2}}\sum_{k=0}^\infty\int_0^\infty\left| (e^{-(2a(2k+1)+w_0+x_0)r}+e^{-(2a(2k+1)-(w_0+x_0))})r^\frac{m+1}{2}\right|dr\\
&\leq \frac{2M_3}{|\ulw+\overline \ulx|^\frac{m-1}{2}}\sum_{k=0}^\infty\int_0^\infty e^{-(2a(2k+1)-|w_0+x_0|)r}r^\frac{m+1}{2}dr.
\end{align*}
Similar to the discussion for $I_1,$ we have that
\begin{align*}
(\ref{berg-ineq4})
&\leq \frac{M_4}{|\ulw+\overline \ulx|^\frac{m-1}{2}(2a-|w_0+x_0|)^\frac{m+3}{2}}
\end{align*}
for $m=2l+1,l=1,2,...,$
and
\begin{align*}
(\ref{berg-ineq4})
&\leq \frac{M_4^\prime}{|\ulw+\overline{\ulx}|^\frac{m-1}{2}(2a-|w_0+x_0|)^\frac{m+4}{2}}
\end{align*}
for $m=2l,l=1,2,...$.\\
\noindent For (\ref{berg-ineq5}), we have
\begin{align*}
(\ref{berg-ineq5})&=
\left| \sum_{k=0}^\infty\frac{1}{|\ulw+\overline \ulx|^\frac{m-1}{2}}\int_0^\infty\left(e^{-(2a(2k+1)+w_0+x_0)r}+e^{-(2a(2k+1)-(w_0+x_0))r}\right)r^{\frac{m+1}{2}}J_{\frac{m-1}{2}}(r|\ulw+\overline\ulx|) dr\right|\\
&\leq \frac{M_5}{|\ulw+\overline \ulx|^\frac{m}{2}}\sum_{k=0}^\infty\int_0^\infty e^{-(2a(2k+1)-|w_0+x_0|)r}r^\frac{m}{2}dr.
\end{align*}
Consequently,
\begin{align*}
(\ref{berg-ineq5})\leq \frac{M_6}{|\ulw+\overline \ulx|^\frac{m}{2}(2a-|w_0+x_0|)^\frac{m+3}{2}}
\end{align*}
for $m=2l+1,l=1,2,...,$
and
\begin{align*}
(\ref{berg-ineq5})\leq \frac{M_6^\prime}{|\ulw+\overline \ulx|^\frac{m}{2}(2a-|w_0+x_0|)^\frac{m+2}{2}}
\end{align*}
for $m=2l,l=1,2,....$

Therefore, we have for $|\ulw+\overline \ulx|\geq 1,$
\begin{align*}
|B(w,\overline x)|\leq \frac{M}{|\ulw+\overline \ulx|^\frac{m-1}{2}(2a-|w_0+x_0|)^\frac{m+3}{2}}
\end{align*}
for $m=2l+1,l=1,2,...,$ and
\begin{align*}
|B(w,\overline x)|\leq \frac{M}{|\ulw+\overline \ulx|^\frac{m-1}{2}(2a-|w_0+x_0|)^\frac{m+4}{2}}
\end{align*}
for $m=2l,l=1,2,...,$ where $M$ is a constant that is independent of $w$ and $x.$

\end{proof}

%


\begin{thebibliography}{10}
	
	\bibitem{BBGNPR} D. B$\acute{e}$koll$\acute{e}$, A. Bonami, G. Garrig\'os, C. Nana, M. Peloso and F. Ricci, {\it Lecture notes on Bergman projectors in tube domains over cones: an analytic geometric viewpoint,} IMHOTEP: African Journal of Pure and Applied Mathematics 5, Expos\'e I, 2012.
	
	\bibitem{BG} D. B$\acute{e}$koll$\acute{e}$ and A. Bonami, {\it Hausdorff-Young inequalities for functions in Bergman spaces on tube domains}, Proceedings of the Edinburgh Mathematical Society, {\bf 41} (1998), pages: 553--566.
	
	\bibitem{Bernstein} S. Bernstein, {\it A Paley-Wiener theorem and Wiener-Hopf integral equations in Clifford analysis,} Advances in Applied Clifford Algebras, {\bf 8} (1998), pages: 31--46.
	
	\bibitem{Brackx-Delanghe-Sommen} F. Brackx, R. Delanghe and F. Sommen, {\it Clifford analysis,} Research Notes in Mahtematics 76, Pitman Advanced Publishing Company, Boston London, Melbourne, 1982.
	

	\bibitem{Dang-Mai-Qian} P. Dang, W.-X. Mai and T. Qian, {Fourier spectrum of Clifford $H^p$ spaces on $\mathbf R^{n+1}_+$ for $1\leq p\leq \infty$,} Journal of Mathematical Analysis and Applications, {\bf 483} (2020), 123598, https://doi.org/10.1016/j.jmaa.2019.123598.


	
	\bibitem{Pena} D. Pe\~na Pe\~na, {\it Cauchy-Kowalevski extensions, Fueter's theorems and boundary values of special systems in Clifford analysis,} PhD thesis, 2008.
	
	
	\bibitem{Durenetal} P. Duren, Eva A. Gallardo-Guti\'errez and A. Montes-Rodr\'iguez, {\it A Paley-Wiener theorem for Bergman spaces with application to invariant subspaces,} Bulletin of the London Mathematical Society, {\bf 39} (2007), pages: 459--466.

	
	\bibitem{Garrigos} G. Garrig\'os, {\it Generalized Hardy spaces on tube domains over cones,} Colloquium Mathematicum, {\bf 90} (2001), pages: 213--251.

	\bibitem{Genchev} T. G. Genchev, {\it Paley-Wiener type theorems for functions in Bergman spaces over tube domains,} Journal of Mathematical Analysis and Applications, {\bf 118} (1986), pages: 496--501.
	
	\bibitem{Gilbert-Murray} J. Gilbert and M. Murray, {\it Clifford algebras and Dirac operators in harmonic analysis,} Cambridge University Press, 1991.

	\bibitem{Hor} L. H\"ormander, {\it The analysis of linear partial differential operator, I. distribution theory and Fourier analysis}, 2nd Ed., Springer-Verlag, Berlin, 2003.
	
	\bibitem{Kou-Qian} K.-I. Kou and T. Qian, {\it The Paley-Wiener theorem in $\mathbf R^n$ with the Clifford analysis setting,} Journal of Functional Analysis, {\bf 189} (2002), pages: 227--241.
	
	\bibitem{Kou-Qian2} K.-I. Kou and T. Qian, {\it Shannon sampling in the Clifford analysis setting,} Journal for Analysis and its Applications, {\bf 24} (2005), pages: 853--870.
	
    \bibitem{Li-Deng-Qian} H.-C. Li, G.-T. Deng and T. Qian, {\it Fourier spectrum characterizations of Hp spaces on tubes over cones for $1\leq p\leq \infty$,} Complex Analysis and Operator Theory, {\bf 12} (2018), pages: 1193--1218.
	
	\bibitem{Li-Mc-Qian} C. Li, A. McIntosh and T. Qian, {\it Clifford algebras, Fourier transform and singular convolution operators on Lipschitz surfaces,} Revista Mathematica Iberoamericana, {\bf 10} (1994), pages: 665--721.


	
	\bibitem{Mitrea} M. Mitrea, {\it Clifford Wavelets, singular integrals and Hardy spaces,} Lecture Note in Mathematics, Springer-Verlag, Berlin Heidelberg, 1994.
	

	
	\bibitem{Paley-Wiener} R. C. Paley and N. Wiener, {\it Fourier transforms in the complex plane,} Colloquium Publications, volume 19, American Mathematical Society, 1934.
	
	\bibitem{Q1}  T. Qian, {\it Characterization of boundary values of functions in Hardy spaces with applications in signal analysis},  Journal Integral Equations Applications, {\bf 17} (2005), pages: 159--198.



	\bibitem{QXYYY}  T. Qian, Y. S. Xu, D. Y.  Yan, L. X. Yan and B. Yu, {\it Fourier spectrum characterization of Hardy spaces and applications}, Proceedings of the American Mathematical Society, {\bf 137} (2009), pages: 971--980.
	

	\bibitem{Schwartz}L. Schwartz, {\it Transformation de Laplace des distributions,} Comm.S\'em. Math. Univ. Lund, (1952), pages: 196--206.
	

	
	
	\bibitem{SW}  E. M. Stein,  G. Weiss, {\it Introduction to Fourier analysis on Euclidean spaces}, Princeton University Press, 1971.
	
	
	\bibitem{Stein2} E. M. Stein, {\it Singular integrals and
		differentiability properties of functions}, Princeton University
	Press, Princeton, New Jersey, 1970.
	

	
	
\end{thebibliography}
\end{document}